\providecommand{\customgenericname}{}
\newcommand{\newcustomtheorem}[2]{%
  \newenvironment{#1}[1]
  {%
   \renewcommand\customgenericname{#2}%
   \renewcommand\theinnercustomgeneric{##1}%
   \innercustomgeneric
  }
  {\endinnercustomgeneric}
}
\newtheorem*{rep@theorem}{\rep@title}
\newcommand{\newreptheorem}[2]{%
\newenvironment{rep#1}[1]{%
 \def\rep@title{#2 \ref{##1}}%
 \begin{rep@theorem}}%
 {\end{rep@theorem}}}
\newcommand{\nocontentsline}[3]{}
\newcommand{\tocless}[2]{\bgroup\let\addcontentsline=\nocontentsline#1{#2}\egroup}
\numberwithin{equation}{section}
\newcommand{\N}{\mathbb{N}}
\newcommand{\R}{\mathbb{R}}
\newcommand{\Q}{\mathbb{Q}}
\newcommand{\E}{\mathbb{E}}
\newcommand{\attesa}[1]{\mathbb{E}\quadre{{#1}}}
\newcommand{\J}{\mathfrak{J}}
\newcommand{\A}{\mathbb{A}}
\newcommand{\F}{\mathcal{F}}
\newcommand{\prob}{\mathbb{P}}
\newcommand{\wienermeasure}{\mathbb{W}^d}
\newcommand{\probmeasures}[1]{\mathcal{P}(#1)}
\newcommand{\contrd}{\mathcal{C}^d}
\newcommand{\conttraj}[1]{\mathcal{C}\tonde{[0,T];#1}}
\newcommand{\cbounded}[1]{\mathcal{C}_b(#1)}
\newcommand{\contpdue}{\mathcal{C}(\mathcal{P}^2)}
\newcommand{\boreliani}[1]{\mathcal{B}_{#1}}
\newcommand{\pwassspace}[2]{
\mathcal{P}^{#1}\tonde{#2}}
\newcommand{\pwassmetric}[3]{
\mathcal{W}_{#1,#2}^{#3}}
\newcommand{\1}{\mathds{1}}
\newcommand{\tonde}[1]{\left({#1}\right)}
\newcommand{\quadre}[1]{\left[{#1}\right]}
\newcommand{\graffe}[1]{\left\lbrace{#1}\right\rbrace}
\newcommand{\abs}[1]{\left\lvert{#1}\right\rvert}
\newcommand{\norm}[1]{\left\lVert{#1}\right\rVert}
\newcommand{\eps}{\varepsilon}
\newcommand{\insieme}[1]{\left\lbrace{#1}\right\rbrace}
\definecolor{Bittersweet}{HTML}{C04F17}
\theoremstyle{plain}
\newtheorem{theorem}{Theorem}[section]
\newtheorem{lemma}[theorem]{Lemma}
\newtheorem{proposition}[theorem]{Proposition}
\theoremstyle{definition}
\newtheorem{definition}{Definition}
\newtheorem{example}{Example}
\newtheorem{remark}{Remark}
\title{Coarse correlated equilibria for continuous time mean field games in open loop strategies}
\author{Luciano Campi\thanks{Department of Mathematics "Federigo Enriques", University of Milan, Via Saldini 50, 20133, Milan, Italy. E-mail address: \href{mailto:luciano.campi@unimi.it}{luciano.campi@unimi.it}} \and Federico Cannerozzi\thanks{Department of Mathematics "Federigo Enriques", University of Milan, Via Saldini 50, 20133, Milan, Italy. \\
Center for Mathematical Economics (IMW), Bielefeld University, Universit\"atsstrasse 25, 33615, Bielefeld, Germany. E-mail address: \href{mailto:federico.cannerozzi@uni-bielefeld.de}{federico.cannerozzi@uni-bielefeld.de}.} \and Markus Fischer\thanks{Department of Mathematics "Tullio Levi-Civita", University of Padua, via Trieste 63, 35121, Padova, Italy. E-mail address: \href{mailto:fischer@math.unipd.it}{fischer@math.unipd.it}.}}
\date{\today}
\begin{document}

\maketitle

\begin{abstract}
In the framework of continuous time symmetric stochastic differential games in open loop strategies, we introduce a generalization of mean field game solution, called coarse correlated solution. This can be seen as the analogue of a coarse correlated equilibrium in the $N$-player game. We justify our definition by showing that a coarse correlated solution for the mean field game induces a sequence of approximate coarse correlated equilibria with vanishing error for the underlying $N$-player games. Existence of coarse correlated solutions for the mean field game is proved by a minimax theorem. An example with explicit solutions is discussed as well.
\end{abstract}

{ \small \textbf{Keywords:} Mean field games, coarse correlated equilibria, open loop strategies, minimax theorem, relaxed controls, propagation of chaos.}%

\vspace{0.2cm}
{ \small \textbf{AMS subject classification:} 91A16; 91A15; 91A11; 90C47; 60B10.}%

\section{Introduction}

Coarse correlated equilibria are a concept of equilibria for games with many players which allows for correlation between players' strategies, thus generalizing the notion of Nash equilibria.
In this paper, we propose a notion of coarse correlated equilibria for a class of continuous time symmetric stochastic differential games and study the corresponding mean field formulation as the number of players $N$ goes to infinity.

Mean field games (MFGs) have been an active theme of research for almost two decades, started in the mid 2000's from the seminal works of Lasry and Lions \cite{lasry_lions} and of Huang, Malham\'e and Caines \cite{huang_malhame_caines}.
Roughly speaking, MFGs arise as the limit formulation of symmetric stochastic $N$-player games with mean field interactions between the players.
Thanks to the mean field interaction and propagation of chaos type results, one expects that the empirical distribution of players' states converges to the law of some representative player.
In the limit, the concept of Nash equilibrium translates into a fixed point problem in the space of flows of measures.
For a probabilistic approach to MFGs, we refer to the two-volume book by Carmona and Delarue \cite{librone_vol1,librone_vol2}.
The relation between the MFG and the $N$-player game is commonly understood in two ways: on the one hand, a solution of the MFG allows to construct approximate Nash equilibria for the corresponding $N$-player games, if $N$ is sufficiently large, see, e.g., \cite{campi18absorption,car_del_probabilistic,carmona_lacker2015weak,huang_malhame_caines}.
On the other hand, approximate Nash equilibria can be shown to converge to solutions of the corresponding MFG.
The choice of admissible strategies, while always important, is crucial for results of this kind: see \cite{fischer2017connection,lacker2016} for earlier results in open loop strategies and Cardaliaguet et al. \cite{cardaliaguet2019master_equation}, Lacker \cite{lacker2020convergence}, Lacker and Le~Flem \cite{lacker_leflem2022} for convergence in closed loop strategies, and the works by Djete \cite{djete2022extended,djete2023large,djete2023convergence}, for MFGs of controls.

\smallskip
The notion of coarse correlated equilibrium (CCE) makes its first appearances implicitly in Hannan's work \cite{hannan1957} and explicitly in Moulin's and Vial's \cite{moulin_vial1978}.
The idea of CCEs can be summarized as follows: The game includes a correlation device or a mediator, who picks a strategy profile randomly according to some probability distribution over the set of strategy profiles, which is assumed to be common knowledge among the players.
Each player must decide whether to commit or not to the strategies selected for her by the mediator \emph{before} the mediator runs the lottery. If a player deviates, she will do so without any information on the outcome of the lottery.
If a player commits, the mediator informs her of her own recommendation, without revealing the recommendation to any other player.
In equilibrium, it is
best to commit to the anticipated outcome of the lottery if one believes that every other player is doing the same.
This notion of equilibrium is weaker than that of correlated equilibrium (CE) \`a la Aumann (see \cite{aumann1970,aumann1987}), where players decide whether to accept the mediator's recommendation \emph{after} having been informed (in private) of the strategies extracted for them.
When the distribution used by the mediator is a product distribution, CCEs reduce to usual Nash equilibria in mixed strategies, because in this case the mediator's recommendations do not carry any additional information over what is common knowledge. 
Among the nice features of CCEs, we notice the fact that they may lead to higher payoffs than Nash equilibria, even when true CEs do not exist (see Moulin et al. \cite{moulinraysengupta2014,Moulin2014Coarse} for an example in a two-person static linear quadratic game), and they naturally arise from a learning procedure of the players, such as the so called regret-based dynamics (see, e.g., Hart and Mas-Colell \cite{hart_mascolell_regret_based} and Roughgarden \cite[Section 17.4]{roughgarden_2016}).

\smallskip
Recently, correlation between players' strategy choices has been considered in the context of mean field games. Bonesini, Campi and Fischer \cite{bonesiniCE,campifischer2021} establish the existence of symmetric CEs in a class of symmetric games with discrete time and finite state and action spaces, give a definition of CE in the mean field limit and provide both approximation and convergence results.
In a second group of papers by M\"{u}ller et al. \cite{muller2022learningCE,muller2021learning}, notions of both CCEs and CEs are studied for a class of symmetric games with discrete time, finite states and finite actions, in a setting close to the one in \cite{bonesiniCE,campifischer2021}.
In addition, \cite{muller2022learningCE,muller2021learning} contain an extensive discussion of learning algorithms for approximating Nash equilibria, CEs and CCEs in the mean field limit.
Then, \cite{campi2023LQ} introduces the notion of CCEs in a class of continuous time linear quadratic MFGs.
A methodology to compute CCEs in such class of MFGs is provided, and, through the study of a simple yet important example with applications in environmental economics, the authors show that there exist infinitely many CCEs for the MFG which both yield higher payoffs than the classical MFG solutions and are more efficient with respect to the environmental goals, highlighting the benefits provided by CCEs over MFGs solutions.
Finally, building on \cite{campi2023LQ}, \cite{cannerozzi2024cooperation} considers CCEs in ergodic MFGs of singular controls, provides constructive existence results, as well as approximation results and comparison with MFG solutions. Remarkably, it is shown there that CCEs may exist even when MFG solutions do not.

\smallskip
Notions of equilibria other than Nash have already been considered in literature, both for games with finitely many and infinitely many players.
We cite the principal-agent problems and Stackelberg equilibria both with finite and infinitely many players (see, e.g., the book \cite{cvitanic2013contract} or the papers \cite{aurell_carmona2022epidemics,bergault2024stackelberg,djete2023stackelberg,elie2019many} and the references therein).
Correlated and coarse correlated equilibria are essentially different from the aforementioned problems, for one main reason: Differently from these problems, in CEs and CCEs, the mediator is not in principle an optimizer.
The mediator may not even be an actual person or agency, although we have opted for the interpretation of a mediator recommending strategies: it may be just the result of the learning procedure of the players (see again \cite{hart_mascolell_regret_based}), or the result of a pre-play communication protocol among the players (see, among others, \cite{benporath1998correlation}).

\smallskip
In this work, we consider MFGs with general dependence on the flow of measures, and we deal both with existence of CCEs in the MFG and the relation between CCEs in the $N$-player game and in the MFG.
In the $N$-player game, the state dynamics follow stochastic differential equations (SDEs), driven by independent standard Brownian motions representing additive idiosyncratic noise, and the interaction between players is given through the empirical distribution of their states, which appears both in the drift of the SDEs and in each player's payoff functional.
Players' strategies are assumed to be open loop, i.e., the correlation device would recommend the players to use strategies adapted to the filtration generated by noises and initial data. More precisely, the correlation device, or recommendation to the $N$ players, is modeled as a random variable taking values in the set of open loop strategy profiles; we require it to be independent of the random shocks and the initial states which determine players' states' evolution.
We deal very carefully with the measurability properties the recommendation has to fulfill so that the players' states are well-defined and the recommended strategies are implementable by the players.
In the mean field limit, the notion of coarse correlated solution we present corresponds to a pair given by a recommendation with values in the set of open loop strategies for the representative player and a random flow of measures fulfilling the following two properties:
\begin{itemize}[label= --]
    \item \textit{Optimality}: the representative player has no incentive to deviate from the recommended strategy \emph{before} the extraction has happened.
    \item \textit{Consistency}: the flow of measures at any time $t$ equals the marginal law of the representative player's state conditioned on the $\sigma$-algebra generated by the whole flow of measures up to terminal time.
\end{itemize}
Through the study of a simple example, our notion of coarse correlated solution to the MFG is compared to the more usual notion of MFG solution, (as defined, e.g., in \cite{car_del_probabilistic}) and the notion of weak MFG solution of \cite{lacker2016}.
Our main contributions are as follows:
\begin{itemize}[label= --]

    \item We justify our notion of coarse correlated solution for the MFG by showing that any coarse correlated solution for the MFG induces a sequence of approximate CCEs in the $N$-player game, with vanishing error as $N$ goes to infinity.

    \item Under an additional convexity assumption, we prove the existence of a coarse correlated solution for the mean field game.

\end{itemize}
Both results will be established using a genuinely probabilistic approach.
As for the approximation result, we use the limit flow of measures to act as a correlation device between players' strategies in the $N$-player game, in the same spirit of \cite{bonesiniCE,campifischer2021}.
Then, the proof of the inverse convergence relies on propagation of chaos arguments, which are in part reminiscent of \cite{car_del_probabilistic,carmona_lacker2015weak}.
On the other hand, to prove existence, we associate a zero-sum game to the search of a coarse correlated solution for the MFG, inspired by the works of Hart and Schmeilder \cite{hart_schmeidler} (for static games), Nowak \cite{nowak1992correlated,Nowak1993} (for continuous time dynamic games) and Bonesini \cite[Appendix 1.B]{bonesini_thesis} (for mean field games with discrete time and finite states and actions, in the setting of \cite{campifischer2021}), which require us to apply a minimax theorem.
To do so, compactness arguments are exploited, adapting some of the techniques used in Lacker's works \cite{lacker2015martingale,lacker2020convergence}.

\smallskip
Referring to previous works \cite{bonesiniCE,campifischer2021}, the reason for considering CCEs instead of CEs is both theoretical and practical.
First, they are more general than CEs, thus than Nash equilibria, both in mixed and pure strategies.
Secondly, the fact that if a player deviates, she is not informed of the outcome of the moderator's lottery makes the treatment of CCEs easier than the one of CEs, due to the fact that in CCEs deviations are independent of the outcome of moderator's lottery.
On the contrary, in CEs, every player is informed of the outcome of moderator's lottery, and then decides whether to play accordingly or not.
Thus, in CEs, deviations would depend on that outcome, giving rise to delicate measurability issues, which for the moment we do not know how to handle properly.
Moreover, while in \cite{bonesiniCE,campifischer2021} restricted closed loop strategies were considered, here we consider for simplicity stochastic open loop strategies, since we deal with the more challenging problem of continuous time, actions and states.

\smallskip
The rest of the paper is organised as follows: in Section \ref{sezione_notations_assumptions} we collect some notations and state the main assumptions, which will be in force throughout the whole paper.
In Section \ref{sezione_formulazione_N_giocatori}, we define the $N$-player game and present a notion of CCE; coarse correlated solutions for the MFG are defined in Section \ref{sezione_formulazione_mfg}.
The approximation and existence results are presented in Sections \ref{sec:approximation} and \ref{sezione_existence}, respectively.
In Section \ref{sezione_esempio}, we consider a simple class of games, already discussed in the literature (see \cite{bardi_fischer2019,lacker2016,lacker2020convergence}): we show that it has coarse correlated solutions which are different from the classical MFG solution, and we compare them with the notions of solution of the previous literature.
Finally, some auxiliary technical results are gathered in the Appendix.

\section{Notations and standing assumptions}\label{sezione_notations_assumptions}

Here, we collect the most frequent notations that occur in this work and state the assumptions.

For a metric space $(E,d_E)$, we denote by $\boreliani{E}$ the Borel $\sigma$-algebra generated by the topology of $E$. When the context allows, we will drop the dependence upon $E$, and just denote it by $\mathcal{B}$.
We denote by $\cbounded{E}$ the set of continuous bounded function $f:E \to \R$.

When given infinitely many product spaces $(E^i,\mathcal{E}^i,P^i)_{i \geq 1}$, we define the product space $(E,\mathcal{E},P) = \otimes_{i = 1}^\infty (\E^i,\mathcal{E}^i,P^i)$ by setting
\[
E = \bigtimes_{i=1}^\infty E^i, \quad \mathcal{E} = \bigotimes_{i=1}^\infty \mathcal{E}^i, \quad P = \bigotimes_{i=1}^\infty P^i, 
\]
where $\mathcal{E}$ is the $\sigma$-algebra generated by the cylinders $C = A_1 \times \cdots \times A_n \times_{i=n+1}^\infty E^i$, for $n \geq 1$ and $A_i \in \mathcal{E}_i$, and $P$ is the probability measure defined on the cylinders by setting $P(C) = \prod_{i=1}^n P_i(A_i)$.

Given a measure space $(M,\mathcal{M},\mu)$ and $B \subseteq \R$, we denote by $L^2(M,\mathcal{M},\mu;B)$ the set of measurable functions $f:M \to B$ so that $\int_M \vert f(m) \vert ^2 \mu(de) < \infty$, and we define its $L^2$-norm by $\Vert f \Vert_{L^2} = (\int_M \vert f(m) \vert ^2 \mu(dm))^\frac{1}{2}$.
As usual, we identify functions $f_1$ and $f_2$ which are equal $\mu$-a.s.

We will denote by $\mathcal{P}(E)$ the set of probability measures on $(E,\boreliani{E})$.
For $p \geq 1$, we denote by $\mathcal{P}^p(E)$ the set of probability measures $m \in \mathcal{P}(E)$ so that, for some point $x_0 \in E$, and thus for any, it holds $\int_E d^p_E(x,x_0)m(dx) < \infty$.
Let $\mathcal{W}_{p,E}(m_1,m_2)$ denote the $p$-Wasserstein distance on $\mathcal{P}^p(E)$, defined as
\begin{equation*}
    \pwassmetric{p}{E}{p}(m_1,m_2)=\inf\graffe{ \int_{E \times E} d^p_E(x,y)\pi(dx,dy):\; \pi \in \mathcal{P}(E\times E), \text{ $\pi$ has marginals $m_1$, $m_2$}}.
\end{equation*}
Any time we will be given two metric spaces $(E,d_E)$ and $(E',d_{E'})$, we will regard $E \times E'$ as a metric space itself, with the distance $d((e,f),(e',f'))=d_E(e,f) + d_{E'}(e',f')$.
The $p$-Wasserstein distance on $\mathcal{P}^{p}(E \times E')$ will always be meant with respect to such distance on $E \times E'$.
For $T>0$ fixed, we denote by $\contrd$ the set of continuous functions from $[0,T]$ in $\R^d$, $d \in \N$, i.e. $\contrd=\conttraj{\R^d}$. We endow $\contrd$ with the norm $\lVert x \rVert_{\contrd}=\sup_{ s \in [0,T]}\vert x_s \vert $.
Occasionally, we will use the semi-norm $\lVert x \rVert_{t,\contrd}=\sup_{s \in [0,t]}\vert x_s \vert$, for $x \in \contrd$.
We will denote as $\wienermeasure \in \mathcal{P}(\contrd)$ the law of a standard $d$-dimensional Brownian motion, and by $\contpdue$ the set of continuous functions from $[0,T]$ in $\mathcal{P}^2(\R^d)$, i.e. $\contpdue=\mathcal{C}([0,T];\mathcal{P}^2(\R^d))$, where $\mathcal{P}^2(\R^d)$ is endowed with the $2$-Wasserstein distance.
We endow $\contpdue$ with the supremum distance $\sup_{t \in [0,T]}\pwassmetric{2}{\mathcal{P}^2(\R^d)}{}(m^1_t,m^2_t)$, for any $m^1=(m^1_t)_{t \in [0,T]}$ and $m^2=(m^1_t)_{t \in [0,T]}$ in $\contpdue$.

When given a filtered probability space $(\Omega,\F,(\mathcal{G}_t)_{t},\prob)$, we regard as the $\prob$-aug\-men\-ta\-tion of the filtration $(\mathcal{G}_t)_{t}$ the filtration $\mathbb{F}=(\F_t)_{t}$, where $\F_t=\cap_{\eps > 0} \sigma(\mathcal{G}_{t + \eps},\mathcal{N})$ and $\mathcal{N}$ stands for the $\prob$-null sets of $\Omega$. Such a filtration satisfies the usual assumptions.

\smallskip
We end this section by stating our standing assumptions on the state dynamics and on the costs of the players in both the $N$-player game and the limit game.
We are given a finite time horizon $T >0$, a control actions space $A$, an initial state distribution $\nu \in \probmeasures{\R^d}$, and the following functions:
\begin{equation*}
\begin{aligned}
    (b,f):[0,T]\times\R^d\times\mathcal{P}^2(\R^d)\times A\to\R^d \times \R, \\
    g:\R^d\times\mathcal{P}^2(\R^d)\to \R,
\end{aligned}
\end{equation*}
which will be referred to, respectively, as the drift function, the running cost and the terminal cost.
The following Assumptions \ref{standing_assumptions} will be in force throughout the whole manuscript.

\begin{customassumptions}{\textbf{A}}\label{standing_assumptions}
\begin{enumerate}[label=\normalfont(A.\arabic*)]
    \item[]
    \item $A\subseteq \R^l$, for some $l \geq 1$, is a compact set.
    \item $\nu \in \mathcal{P}^{\overline{p}}(\R^d)$, for some $\overline{p} > 4$.
    \item The functions $b$, $f$ and $g$ are jointly measurable in $(t,x,m,a)$.
    \item $b(t,x,m,a)$ is Lipschitz in $a \in A$, $m \in \mathcal{P}^2(\R^d)$ and $x \in \R^d$, uniformly in $t$:
    \begin{equation*}
        \vert b(t,x,m,a)-b(t,x',m',a') \vert \leq L \tonde{\vert a-a' \vert  + \vert x-x' \vert  + \pwassmetric{2}{\R^d}{}(m,m')}
    \end{equation*}
    for every $t \in [0,T]$, $(x,m,a)$ and $(x',m',a')$ in $\R^d \times \mathcal{P}^2(\R^d)\times A$.
    \item The functions $[0,T] \ni t \mapsto (b,f)(t,0,\delta_0,a_0)$ are bounded, for some $a_0 \in A$ and $\delta_0 \in \mathcal{P}^2(\R^d)$.
    \item $f$ and $g$ are locally Lipschitz in $(x,m,a)$ for every fixed $t \in [0,T]$ with at most quadratic growth, i.e., there exists a positive constant $L>0$ so that
    \begin{equation*}
    \begin{aligned}
        \big\vert (f,g) & (t,x,m,a)-(f,g)(t,x',m',a') \big\vert  \\
        \leq & \, L\left( 1 + \abs{x}+\abs{x'} + \tonde{\int_{\R^d} \abs{y}^2 m(dy)}^\frac{1}{2} + \tonde{\int_{\R^d} \abs{y}^2 m'(dy)}^\frac{1}{2} +\abs{a}+\abs{a'}\right) \\
        & \cdot \tonde{\vert x-x' \vert +\pwassmetric{2}{\R^d}{}(m,m')+\vert a-a' \vert },
    \end{aligned}
    \end{equation*}
    for every $t \in [0,T]$, $(x,m,a)$ and $(x',m',a')$ in $\R^d \times \mathcal{P}^2(\R^d)\times A$.
\end{enumerate}
\end{customassumptions}
Assumptions (A.3-6) are fairly standard in MFG literature, see e.g. \cite{carmona_del_fbsdes}. In particular, they yield existence and uniqueness of strong solutions to the equations governing the dynamics, as well as finiteness and sufficient regularity of the cost functionals. Assumptions (A.1-2) enable us to apply compactness arguments in the proof of our main results, and they are particularly relevant for the existence in result in Section \ref{sezione_existence}.

\section{Formulation of the $N$-player game}\label{sezione_formulazione_N_giocatori}
Consider the following canonical space
\begin{equation}\label{canonical_setup}
\begin{aligned}
    \Omega^1=\bigtimes_{1}^\infty \left( \R^d \times \contrd \right), \quad \F^1=\bigotimes_{1}^\infty \left(  \boreliani{\R^d} \otimes \boreliani{\contrd} \right), \quad \prob^1=\bigotimes_{1}^\infty \left( \nu \otimes \wienermeasure \right).
\end{aligned}
\end{equation}
We define a sequence of random variables $(\xi^i)_{i \geq 1}$ and of Brownian motions $(W^i)_{i \geq 1}$, by taking the projections:
\begin{equation}\label{canonical_browian_motions}
    \xi^i(\omega_1)=\xi^i((x^j,w^j)_{j \geq 1})=x^i, \quad W^i_t(\omega_1)=W^i_t((x^j,w^j)_{j \geq 1})=w^i_t, \; t \in [0,T].
\end{equation}
By definition of $\prob^1$, $(\xi^i)_{i\geq 1}$ and $(W^i)_{i \geq 1}$ are mutually independent, $(\xi^i)_{i\geq 1}$ are independent and identically distributed with law $\nu \in \mathcal{P}^{\overline{p}}(\R^d)$ and $(W^i)_{i \geq 1}$ are independent $d$-dimensional standard Brownian motions.

\medskip
Let $N\in\N$, $N\geq 2$, be the number of players.
We define the filtration $\mathbb{F}^{1,N}$ as the $\prob^1$-augmentation of the filtration generated by the first $N$ random variables $(\xi^i)_{i=1}^N$ and Brownian motions $(W^i)_{i=1}^N$.
Therefore, for the $N$-player game, we work on the space
\begin{equation}\label{finite_players:canonical_space}
    (\Omega^1,\F^1,\mathbb{F}^{1,N},\prob^1).
\end{equation}
We stress that, for every $N \geq 2$, we keep the probability space $(\Omega^1,\F^1,\prob^1)$ fixed while the filtration $\mathbb{F}^{1,N}$ varies.

\medskip
Consider the set $\A_N$ of $\mathbb{F}^{1,N}$-progressively measurable processes taking values in $A$:
\begin{equation}\label{finite_players:strategie_open_loop}
    \A_N = \insieme{\alpha:[0,T]\times\Omega^1\to A \; \Big\vert \;  \text{$\alpha$ is $\mathbb{F}^{1,N}$-progressively measurable } }.
\end{equation}
Provided that we identify processes which are equal $Leb_{[0,T]}\otimes\prob^1$-a.e., we can regard $\A_N$ as
\begin{equation*}
    \A_N=L^2 \tonde{[0,T]\times\Omega^1,\mathcal{R}^{1,N},Leb_{[0,T]} \otimes \prob^1;A},
    \end{equation*}
where $\mathcal{R}^{1,N}$ stands for the progressive $\sigma$-algebra on $[0,T]\times\Omega^1$, using the filtration $\mathbb{F}^{1,N}$.
We call any element $\alpha\in\A_N$ an open loop strategy for the $N$-player game.
We regard a vector $(\alpha^1,\dots,\alpha^N) \in \A_N^N=\bigtimes_1^N \A_N$ as an open loop strategy profile for the $N$ players, which will be
occasionally denoted by $\bm{\alpha}$.
We endow such a space $\A_N$ with the norm
\begin{equation}\label{finite_players:semi_norm}
    \norm{\alpha}_{L^2}=\E^{\prob^1}\quadre{\int_0^T \abs{\alpha_t}^2 dt}^\frac{1}{2}
\end{equation}
and consider the Borel $\sigma$-algebra $\boreliani{\A_N}$ associated to that.
We observe that, since $([0,T]\times\Omega^1,\boreliani{[0,T]\times\Omega^1})$ is Polish and $A$ is closed, $\A_N$ is a separable Banach space.
In the following, we will make no distinction between an $\mathbb{F}^{1,N}$-progressively measurable process $\alpha$ and any other process $\alpha'$ which is equal to it $Leb_{[0,T]}\otimes\prob^1$-almost everywhere.

\begin{definition}[Recommendation profile]
We call \emph{recommendation profile} to the $N$ players a pair $((\Omega^0,\F^{0-},\prob^0),\Lambda)$ so that the following holds:
\begin{enumerate}
    
    \item $(\Omega^0,\mathcal{F}^{0-}$, $\prob^0)$ is a complete probability space; $\Omega^0$ is a Polish space and $\F^{0-}$ is its corresponding Borel $\sigma$-algebra.

    \item $\Lambda =(\Lambda^1,\dots,\Lambda^N)$ is a random vector with values in $\A_N^N$:
    \begin{equation}\label{raccomandazione}
    \begin{aligned}
    \Lambda: (\Omega^0,\mathcal{F}^{0-},\prob^0) & \longrightarrow (\A_N^N,\mathcal{B}_{\A_N^N}) \\
    \omega_0 & \longmapsto \Lambda(\omega_0)=(\alpha^1,\dots,\alpha^N):[0,T]\times\Omega^1\to A^N.
    \end{aligned}
    \end{equation}
\end{enumerate}
\end{definition}
We interpret the recommendation profile as follows: A correlation device or a mediator runs a lottery over open loop strategy profiles according to some publicly known distribution $\prob^0$ and communicates privately to each player a strategy according to the selected profile.
The extraction of the strategy profile happens before the game starts and it is independent of the idiosyncratic shocks that determine the random evolution of players' states.

\smallskip
For a given recommendation profile $((\Omega^0,\F^{0-},\prob^0),\Lambda)$, we build a probability space large enough to support both the moderator's recommendation profile $\Lambda$ and the random shocks and initial data $(\xi^j,W^j)_{j=1}^N$, in such a way that they are independent.
On this new probability space, we then consider the strategy profile associated to the recommendation $\Lambda$, that is, the strategies that the committing players will use.
These strategies will depend both on the realisation of moderator's lottery $\Lambda^i(\omega_0)$ and the initial data and idiosyncratic noise that determine the evolution of players' states.
Let $(\Omega,\mathcal{F},\prob)$ be defined by
\begin{equation}\label{finite_players:condizione_ammissibilita}
    (\Omega,\mathcal{F},\prob)= (\Omega^0 \times \Omega^1,\mathcal{F}^{0-}\otimes\F^1,\prob^0\otimes\prob^1).
\end{equation}
We complete the $\sigma$-algebra $\F$ with the $\prob$-null sets and endow the product probability space with the $\prob$-augmentation of the filtration
\begin{equation*}
    \mathbb{F}= \F^{0-} \otimes \mathbb{F}^{1,N} = (\F^{0-}\otimes\F^{1,N}_t)_{t\in[0,T]}.
\end{equation*}
On the filtered probability space $(\Omega,\F,\mathbb{F},\prob)$ we would like to consider the strategy profile associated to a recommendation profile $\Lambda$, by setting
\begin{equation}\label{controllo_indotto}
    \lambda^i_t(\omega)= \lambda^i_t(\omega_0,\omega_1)=\Lambda^i(\omega_0)_t(\omega_1), \quad i=1,\dots,N.
\end{equation}
A priori, the process $\lambda$ constructed may not be progressively measurable, for instance when a recommendation profile $\Lambda$ takes uncountably many values.
The essential reason is that we cannot deduce the measurability of a set in the product $\sigma$-algebra from the measurability of its sections, as shown, e.g., in \cite[p.\,5]{stoyanov}.
For this reason, we have the following admissibility definition:
\begin{definition}[Admissible recommendation profile]\label{def_correlated_strategy}
A recommendation profile \linebreak $((\Omega^0,\F^{0-},\prob^0),\Lambda)$ is \emph{admissible} if there exists a process $\lambda=(\lambda^1_t,\dots,\lambda^N_t)_{t\in[0,T]}$ with values in $A^N$, defined on the product space $(\Omega,\F,\prob)$ and $\mathbb{F}$-progressively measurable, so that, for every $i=1,\dots,N$, it holds
\begin{equation}\label{finite_players:uguaglianza_ammissibilita}
            \norm{(\lambda^i_t(\omega_0,\cdot))_{t \in [0,T]}-\Lambda^i(\omega_0)}_{L^2([0,T] \times \Omega^1)}=0, \quad \prob^0\text{-a.s.,} \: i=1,\dots,N.
\end{equation}
Any such process $\lambda$ will be called \emph{strategy profile associated to the admissible recommendation profile} $((\Omega^0,\F^{0-},\prob^0),\Lambda)$.
\end{definition}
Observe that the strategy profile $\lambda$ is the result of both moderator's recommendation and the noises, and it will appear in the dynamics and the cost of the committing players. We remark that, by Proposition \ref{esempi:unicita_strategia_associata}, given any admissible recommendation to the $N$ players $((\Omega^0,\F^{0-},\prob^0),\Lambda)$, the strategy profile $\lambda$ associated to it is unique $Leb_{[0,T]}\otimes\prob$-almost everywhere. We give some examples of admissible recommendations in Example \ref{mf:example:admissible_recommendations} in the following Section \ref{sezione_formulazione_mfg}.

\begin{remark}\label{finite_players:remark_estensioni}
As usual, we can extend random variables defined on $\Omega^1$ to random variables defined on $\Omega$. Indeed, suppose $X:(\Omega^1,\mathcal{F}^1)\to (E,\mathcal{E})$ is a random variable with values in some measurable space $(E,\mathcal{E})$. We can then regard  $X$ as defined on the space $(\Omega,\mathcal{F})$ via the identification $\Tilde{X}(\omega_0,\omega_1)=X(\omega_1)$, and analogously for $\Omega^0$.
In this sense, via the identification $(\Tilde{\xi}^i,\Tilde{W}^i)(\omega_0,\omega_1)=(\xi^i,W^i)(\omega_1)$ for every $i=1,\dots,N$, we can regard the Brownian motions and initial data as defined on $\Omega$; we observe that $(W^i)_{i=1}^N$ are independent standard Brownian motions with respect to the filtration $\mathbb{F}$ as well.
Moreover, we can identify each process $\alpha\in\A_N$, which is defined on $\Omega^1$, with a process $\Tilde{\alpha}$ defined on $\Omega$ via the identification $\Tilde{\alpha}(\omega_0,\omega_1)=\alpha(\omega_1)$.
Such a process is progressively measurable with respect to the filtration $\mathbb{F}$ and independent of $\F^{0-}$.
\end{remark}
Observe that, by construction, we have $\F^{0-} \subseteq \F_t$ for every $t \in [0,T]$, and $\F^{0-}$ is independent of the filtration of noises $\mathbb{F}^1$.
This models the fact that the extraction of the strategy profile happens before the game starts and is independent of the idiosyncratic shocks that determine the random evolution of players' states.
We stress that, by definition, the realization $\Lambda^i(\omega_0)$ is an $\mathbb{F}^{1,N}$-progressively measurable process in $\A_N$, for any scenario $\omega_0 \in \Omega^0$ and $i=1,\dots,N$.
Observe that, even though $\Lambda$ and $(\xi^j,W^j)_{j=1}^N$ are independent, the strategy profile associated to the recommendation $\lambda=(\lambda_t)_{t \in [0,T]}$ is in general not independent of either of them, since it is the result of both the recommendation profile and the random shocks and initial data.

\medskip
Let $((\Omega^0,\F^{0-},\prob^0),\Lambda)$ be an admissible recommendation profile.
On the probability space $(\Omega,\F,\mathbb{F},\prob)$ defined in \eqref{finite_players:condizione_ammissibilita}, we assign players state dynamics and define the cost functionals.
If all players follow the recommendation $\Lambda$, players' state dynamics are given by the following system of stochastic differential equations:
\begin{equation}\label{dinamiche_no_deviazione}
    \begin{cases}
    dX^j_t = b(t,X^j_t,\mu^N_t,\lambda^j_t)dt +dW^j_t, \qquad 0\leq t \leq T, \\
    X^j_0 = \xi^j,
    \end{cases}
\end{equation}
for every $j\in\insieme{1,\dots,N}$, where $\mu^N_t$ is the empirical measure of the state processes of all players at time $t$:
\begin{equation}\label{misura_empirica}
    \mu^N_t=\frac{1}{N}\sum_{\substack{j=1}}^N \delta_{X^j_t}.
\end{equation}

Suppose player $i$ deviates, while the other players follow the recommendations they receive from the mediator.
The deviating player will pick instead an open loop strategy $\beta\in\A_N$.
In other words, at every time $t$ and for every scenario $\omega$, player $i$ plays the action $\Tilde{\beta}_t(\omega)=\beta_t(\omega_1)$ instead of playing the recommended action $\lambda^i_t(\omega)=\Lambda^i(\omega_0)_t(\omega_1)$.
Then, players' state dynamics are given by the following system of stochastic differential equations:
\begin{equation}\label{dinamiche_deviazione}
    \begin{cases}
    dX^j_t = b(t,X^j_t,\mu^N_t,\lambda^{j}_t)dt +dW^j_t, \qquad   0\leq t \leq T, \quad X^j_0=\xi^j, \quad j\neq i \\
    dX^i_t = b(t,X^i_t,\mu^N_t,\beta_t)dt +dW^i_t, \qquad 0\leq t \leq T, \quad X^i_0=\xi^i, \end{cases}
\end{equation}
where $\mu^N_t$ is defined as in \eqref{misura_empirica}. Assumptions \ref{standing_assumptions} ensure that there always exists an $\mathbb{F}$-adapted continuous solution to both equations \eqref{dinamiche_no_deviazione} and \eqref{dinamiche_deviazione} so that
\[
\E[\sup_{t \in [0,T]}\max_{1 \leq j \leq N}\vert X^j_t\vert^2] < \infty.
\]
Moreover, pathwise uniqueness holds so that, by Theorem \ref{teorema_di_unicita_legge}, uniqueness in law holds as well.

\begin{remark}\label{finite_players:remark_deviazioni}
We notice that there is an asymmetry between the information available to the mediator and the deviating player.
If a player deviates, she will use an open loop strategy $\beta \in \A_N$, therefore the information available to the deviating player is just given by the smaller Brownian filtration $\mathbb{F}^{1,N}$.
In particular, she will use a strategy which is independent of $\F^{0-}$, thus of the admissible recommendation profile $\Lambda$.
This models the fact that the players must decide whether to follow or not the recommendation without knowing the outcome of the mediator's lottery.
\end{remark}

As for the cost functional, let $((\Omega^0,\F^{0-},\prob^0),\Lambda)$ be an admissible recommendation profile.
If all players follow the recommendation, then the cost functional of player $i=1,\dots,N$ is given by
\begin{equation*}
    \J^N_i(\Lambda) =\E\quadre{\int_0^T f(t,X^i_t,\mu^N_t,\lambda^i_t)dt + g(X^i_T,\mu^N_T)},
\end{equation*}
with dynamics given by \eqref{dinamiche_no_deviazione}.
If instead player $i$ does not play according to the recommendation $\Lambda^i$ and plays a different strategy $\beta \in \A_N$, while the other players stick to the recommendation profile $\Lambda^{-i}=(\Lambda^1,\dots,\Lambda^{i-1},\Lambda^{i+1},\dots,\Lambda^N)$, we define the cost functional of player $i$ as
\begin{equation*}
\begin{aligned}
    \J^N_i(\Lambda^{-i},\beta) & =\E\quadre{\int_0^T f(t,X^i_t,\mu^N_t,\beta_t)dt + g(X^i_T,\mu^N_T)},
\end{aligned}
\end{equation*}
where the dynamics are given by \eqref{dinamiche_deviazione}.
We stress that the expectation in the cost functional is taken with respect to the product probability measure $\prob = \prob^0 \otimes \prob^1$, although we omit this dependence for conciseness.
Finally, we give the notion of $\eps$-coarse correlated equilibrium:
\begin{definition}[$\eps$-coarse correlated equilibrium]\label{def_CCE}
Let $\eps\geq0$. An admissible recommendation profile $((\Omega^0,\F^{0-},\prob^0),\Lambda)$ is an \emph{$\eps$-coarse correlated equilibrium} for the $N$-player game ($\eps$-CCE) if
\begin{equation}\label{def_CCE:optimality}
    \J^N_i(\Lambda)\leq \J^N_i(\Lambda^{-i},\beta) + \eps
\end{equation}
for all open loop strategies $\beta \in \A_N$ and all players $i=1,\dots,N$.
We call an admissible recommendation profile $((\Omega^0,\F^{0-},\prob^0),\Lambda)$ a \emph{coarse correlated equilibrium} for the $N$-player game if it is an $\eps$-coarse correlated equilibrium with $\eps=0$.
\end{definition}

Notice that even if player $i$ deviates, she can actually compute her cost functional $\J^N_i(\Lambda^{-i},\beta)$ by only knowing the joint law of the admissible recommendation profile $\Lambda$, for any open loop strategy $\beta$. Indeed, it is possible to express the equilibrium property \eqref{def_CCE:optimality} in terms of the law of the recommendation profile $\Lambda$ on the right-hand side, and in terms of the marginal law of $\Lambda^{-i}$ on the left-hand side.

\smallskip
The usual notion of Nash equilibrium in open loop strategies is consistent with the definition of coarse correlated equilibrium: Suppose we are given an $\eps$-Nash equilibrium $(\alpha^1,\dots,\alpha^N)$ in open loop strategies.
We choose $(\Omega^0,\F^{0-},\prob^0)$ as the trivial probability space and $\Lambda$ as constant and equal to $(\alpha^1,\dots,\alpha^N)$.
It is then straightforward to see that the triple $((\Omega^0,\F^{0-},\prob^0),\Lambda)$ is an $\eps$-CCE according to Definition \ref{def_CCE}.

Observe that a Nash equilibrium in open loop strategies $(\alpha^1,\dots,\alpha^N)$ is progressively measurable with respect to the filtration $\mathbb{F}^{1,N}$, while a strategy profile $\lambda$ associated to an admissible recommendation $\Lambda$ contains the information carried by $\Lambda$ itself, which is the information the mediator uses to randomize players' strategies.
Moreover, when dealing with Nash equilibria, the deviating player has access to the same information as the other players, since they all use $\mathbb{F}^{1,N}$-progressively measurable strategies.
On the contrary, CCEs present a certain asymmetry between the information available to the committing and the deviating players, as pointed out in Remark \ref{finite_players:remark_deviazioni}.

\begin{remark}[Role of the probability space $(\Omega^0,\mathcal{F}^{0-},\prob^0)$]
According to Definition \ref{def_correlated_strategy}, the probability space $(\Omega^0,\mathcal{F}^{0-},\prob^0)$ is part of the definition of admissible recommendation.
The natural interpretation is that the mediator chooses the auxiliary space he uses to correlate players' strategies.
Moreover, according to equations \eqref{dinamiche_no_deviazione} and \eqref{dinamiche_deviazione}, it determines the probability space on which state processes are defined.
In order to keep the notation as simple as possible, by abuse of notation, we mostly refer only to $\Lambda$ as the admissible recommendation instead of the pair $((\Omega^0,\F^{0-},\prob^0),\Lambda)$.
\end{remark}

\begin{remark}[Relationship with correlated equilibria of \cite{bonesiniCE, campifischer2021}]
It is worth to briefly compare our notion of coarse correlated equilibria with the notion of correlated equilibria of \cite{campifischer2021} and \cite{bonesiniCE}.
In these works, the authors deal with discrete time models with finite sets of individual states and control actions, and consider restricted closed-loop strategies.
Most importantly, in their framework, correlated equilibria are considered, and not coarse correlated equilibria: there, each player observes the  outcome of moderator's lottery and then decides whether to play it or not.
Thus, in that context, if player $i$ deviates, she would use a strategy $\beta$ which is not a priori independent of the outcome of moderator's lottery, but could depend on the realization $\Lambda^i(\omega_0)$, and she could use that information to choose her deviation. On the contrary, in our model, the deviating player chooses a strategy $\beta$ which is independent of the admissible recommendation profile $\Lambda$, as the deviating player has no access to the recommended strategy.
\end{remark}

\section{Formulation of the mean field game}\label{sezione_formulazione_mfg}

Consider the following canonical space
\begin{equation}\label{mf:canonical_space}
    \Omega^*=\R^d \times \contrd, \quad \F^*=\boreliani{\R^d} \otimes \boreliani{\contrd},\quad \prob^*=\nu\otimes\wienermeasure.
\end{equation}
Define $\xi$ and $W=(W_t)_{t \in [0,T]}$ as
\begin{equation}
    \xi(\omega_*)=\xi(x,w)=x, \quad W_t(\omega_*)=W_t(x,w)=w_t.
\end{equation}
By definition of $\prob^*$, $\xi$ and $W$ are independent, $\xi$ is an $\R^d$-valued random variable with law $\nu$ and $W$ is a standard Brownian motion.
Define the filtration $\mathbb{F}^*$ as the $\prob^*$-augmentation of the filtration generated by $\xi$ and $W$.

\medskip
Consider the set $\A$ of $\mathbb{F}^*$-progressively measurable processes taking values in $A$:
\begin{equation}\label{mf:strategie_open_loop}
    \A = \insieme{\alpha:[0,T]\times\Omega^*\to A \; \Big\vert \;  \text{$\alpha$ is $\mathbb{F}^*$-progressively measurable } }.
\end{equation}
Provided that we identify processes which are equal $Leb_{[0,T]}\otimes\prob^*$-a.e., we can regard $\A$ as
\begin{equation*}
    \A=L^2 \tonde{[0,T]\times\Omega^*,\mathcal{R}^*,Leb_{[0,T]} \otimes \prob^*;A},
    \end{equation*}
where $\mathcal{R}^*$ stands for the progressive $\sigma$-algebra on $[0,T]\times\Omega^*$, using the filtration $\mathbb{F}^*$.
We call any element $\alpha\in\A$ an open loop strategy for the mean field game.
We endow such a space $\A$ with the norm
\begin{equation}\label{mf:semi_norm}
    \norm{\alpha}_{L^2}=\E^{\prob^*}\quadre{\int_0^T \abs{\alpha_t}^2 dt}^\frac{1}{2}
\end{equation}
and consider the Borel $\sigma$-algebra $\boreliani{\A}$ associated to that.
We observe that, since $([0,T]\times\Omega^*,\boreliani{[0,T]\times\Omega^*})$ is Polish and $A$ is closed, $\A$ is a separable Banach space.
Finally, we will make no distinction between an $\mathbb{F}^*$-progressively measurable process $\alpha$ and any other process $\alpha'$ which is equal to it $Leb_{[0,T]}\otimes\prob^*$-almost everywhere.

As in the $N$-player game, we define the recommendation to the representative player, and then the admissibility requirement.
\begin{definition}[Recommendation for the mean field game]
We call \emph{recommendation} a pair $((\Omega^0,\F^{0-},\prob^0),\Lambda)$ where:
\begin{enumerate}
    
    \item $(\Omega^0,\mathcal{F}^{0-}$, $\prob^0)$ is a complete probability space; $\Omega^0$ is a Polish space and $\F^{0-}$ is its corresponding Borel $\sigma$-algebra.

    \item $\Lambda$ is a random variable with values in $\A$:
    \begin{equation}\label{mf:raccomandazione}
    \begin{aligned}
    \Lambda: (\Omega^0,\mathcal{F}^{0-},\prob^0) & \longrightarrow (\A,\boreliani{\A}) \\
    \omega_0 & \longmapsto \Lambda(\omega_0)=\alpha:[0,T]\times\Omega^*\to A.
    \end{aligned}
    \end{equation}
\end{enumerate}
\end{definition}
For a given recommendation $((\Omega^0,\F^{0-},\prob^0),\Lambda)$, we build a probability space large enough to support both the moderator's recommendation $\Lambda$ and the random shocks and initial datum $(\xi^*,W^*)$, in such a way that they are independent.
Let $(\Omega,\mathcal{F},\prob)$ be defined by
\begin{equation}\label{mf:condizione_ammissibilita}
    (\Omega,\F,\prob)= (\Omega^0 \times \Omega^*,\F^{0-}\otimes\F^*,\prob^0\otimes\prob^*).
\end{equation}
We complete the $\sigma$-algebra $\F$ with the $\prob$-null sets and endow the product probability space with the $\prob$-augmentation of the filtration
\begin{equation*}
    \mathbb{F}= \F^{0-} \otimes \mathbb{F}^* = (\F^{0-}\otimes\F^*_t)_{t\in[0,T]}.
\end{equation*}
On the filtered probability space $(\Omega,\F,\mathbb{F},\prob)$ we would like to consider the strategy profile associated to a recommendation profile $\Lambda$, by setting
\begin{equation}\label{mf:controllo_indotto}
    \lambda_t(\omega)= \lambda_t(\omega_0,\omega_*)=\Lambda(\omega_0)_t(\omega_*),
\end{equation}
As in Section \ref{sezione_formulazione_N_giocatori}, in order for such process $\lambda$ to be progressively measurable, we have the following admissibility definition:
\begin{definition}[Admissible recommendation for the mean field game]\label{mf:admissible_recommendation}
A recommendation for the mean field game $((\Omega^0,\F^{0-},\prob^0),\Lambda)$ is admissible if  there exists an $A$-valued process $\lambda=(\lambda_t)_{t\in[0,T]}$, defined on $(\Omega,\F,\prob)$ and $\mathbb{F}$-progressively measurable, so that it holds
\begin{equation}\label{mf:uguaglianza_ammissibilita}
        \norm{(\lambda_t(\omega_0,\cdot))_{t \in [0,T]}-\Lambda(\omega_0)}_{L^2([0,T] \times \Omega^*)}=0, \quad \prob^0\text{-a.s.}
\end{equation}
Any such process process $\lambda$ will be called \emph{strategy associated to the admissible recommendation} $((\Omega^0,\F^{0-},\prob^0),\Lambda)$. 
\end{definition}
We remark that, by Proposition \ref{esempi:unicita_strategia_associata}, given any admissible recommendation $((\Omega^0,\F^{0-},$ $\prob^0),\Lambda)$, the strategy $\lambda$ associated to it is unique $Leb_{[0,T]}\otimes\prob$-almost everywhere.

\begin{definition}[Correlated flow]\label{def_correlated_flow}
A \emph{correlated flow} is a triple $((\Omega^0,\F^{0-},\prob^0),\Lambda,\mu)$ where:
\begin{enumerate}
    \item $((\Omega^0,\F^{0-},\prob^0),\Lambda)$ is an admissible recommendation.

    \item $\mu:(\Omega^0,\F^{0-},\prob^0)\to (\contpdue,\boreliani{\contpdue})$ is a random continuous flow of measures in $\mathcal{P}^2(\R^d)$.
\end{enumerate}
\end{definition}
The same considerations as in Remark \ref{finite_players:remark_estensioni} about the extension of random variables on the product space $(\Omega,\F,\prob)$ hold for correlated flows as well.

\medskip
Let $((\Omega^0,\F^{0-},\prob^0),\Lambda,\mu)$ be a correlated flow.
On the product probability space $(\Omega,\mathcal{F},\prob)$ defined in \eqref{mf:condizione_ammissibilita}, we assign state dynamics.
If the representative player decides to play according to the admissible recommendation $\Lambda$, the dynamics is given by the following SDE:
\begin{equation}\label{dinamica_MF_no_deviazione}
    \begin{cases}
    dX_t = b(t,X_t,\mu_t,\lambda_t)dt +dW_t, \qquad  0\leq t \leq T, \\
    X_0 = \xi.
    \end{cases}
\end{equation}
If instead the representative player decides to ignore the mediator's recommendation and to use a possibly different strategy $\beta\in\A$, the dynamics is given by the following SDE:
\begin{equation}\label{dinamica_MF_deviation}
    \begin{cases}
    dX_t = b(t,X_t,\mu_t,\beta_t)dt +dW_t, \qquad 0\leq t \leq T, \\
    X_0 = \xi.
    \end{cases}
\end{equation}
By Assumptions \ref{standing_assumptions}, on any space $(\Omega,\F,\mathbb{F},\prob)$ there exists a solution to equation \eqref{dinamica_MF_no_deviazione} and pathwise uniqueness holds.
By Theorem \ref{teorema_di_unicita_legge}, uniqueness in law holds.
Analogous considerations apply to equation \eqref{dinamica_MF_deviation}.

\medskip
Let $((\Omega^0,\F^{0-},\prob^0),\Lambda,\mu)$ be a correlated flow.
The cost functionals for the representative player and the deviating player, whose state dynamics follow \eqref{dinamica_MF_no_deviazione} and \eqref{dinamica_MF_deviation}, respectively, are given by:
\begin{equation}\label{costi_mfg}
\begin{aligned}
    & \J(\Lambda,\mu)=\E\quadre{\int_0^T f(t,X_t,\mu_t,\lambda_t)dt + g(X_T,\mu_T)}, \\
    & \J(\beta,\mu)=\E\quadre{\int_0^T f(t,X_t,\mu_t,\beta_t)dt + g(X_T,\mu_T)}.
\end{aligned}
\end{equation}
As in the $N$-player game, the expectation in the cost functional is taken with respect to the product probability measure $\prob = \prob^0 \otimes \prob^*$; in particular, it depends on the mediator's randomization $\prob^0$ also when the representative player deviates. Finally, we give the definition of coarse correlated solution of the mean field game:
\begin{definition}[Coarse correlated solution]\label{def_mean_field_sol}
A correlated flow $((\Omega^0,\F^{0-},\prob^0),\Lambda,\mu)$ is a \emph{coarse correlated solution} of the mean field game if the following properties hold:
\begin{enumerate}[label=(\roman*)]
    \item Optimality: for every deviation $\beta \in \A$, it holds
    \begin{equation}\label{def_mean_field_sol:opt}
        \J(\Lambda,\mu)\leq \J(\beta,\mu).
    \end{equation}
    \item Consistency: for every time $t \in [0,T]$, $\mu_t$ is a version of the conditional law of $X_t$ given $\mu$, that is,
    \begin{equation}\label{def_mean_field_sol:cons}
        \mu_t(\cdot)=\prob(X_t \in\cdot \; \vert \; \mu ) \quad \prob\text{-a.s.} \;\; \forall t \in [0,T].
    \end{equation}
\end{enumerate}
We will refer to coarse correlated solutions of the mean field game as coarse correlated mean field solutions and mean field coarse correlated equilibria (CCE) as well.
\end{definition}

\begin{remark}[Role of $(\Omega^0,\F^{0-},\prob^0)$]
Analogously as in the $N$-player game, although the probability space $(\Omega^0,\mathcal{F}^{0-},\prob^0)$ is part of the definitions of admissible recommendation and correlated flow, when it is clear from the context we refer to $\Lambda$ and $(\Lambda,\mu)$, instead of the pair $((\Omega^0,\F^{0-},\prob^0),\Lambda)$ and the triple $((\Omega^0,\F^{0-},\prob^0),\Lambda,\mu)$, as admissible recommendation and correlated flow, respectively.
\end{remark}

As in \cite{bonesiniCE,campifischer2021}, the consistency condition \eqref{def_mean_field_sol:cons} should be read in the following way: the mediator imagines what the flow of measures will be, up to the terminal horizon $T$, before the game starts, and gives a recommendation to each player according to his idea. Since the flow of measures is expected to be stochastic as a result of the mediator's randomization only, we request it to be measurable with respect to $\F^{0-}$, and, since the randomization is performed before the game starts, we have $ \F^{0-} \subseteq \F_t$ for any $t \geq 0$. If all players commit to the mediator's lottery for generating recommendations, then the flow of measures should arise from aggregation of the individual behaviors, consistently with what imagined by the mediator. Since the generation of the recommendation is performed on the basis of the whole flow of measures, we formulate consistency condition \eqref{def_mean_field_sol:cons} with respect to conditioning on the whole flow.
Regarding the strategy of the deviating player, as in the $N$-player game, if the player deviates, she chooses her strategy on her own, without using any of the information carried by $\Lambda$ or $\mu$:
the only information she has about $\Lambda$ or $\mu$ comes from the knowledge of their joint law, which is assumed to be known by the representative player, in analogy to the $N$-player game.

\begin{remark}[Relation with MFGs with common noise]\label{mfg:rmk:common_noise}
Given the consistency condition \eqref{def_mean_field_sol:cons}, it is worth comparing coarse correlated solutions to the MFG and solutions to MFGs with common noise (see, e.g., \cite{carmona2016commonnoise,lacker2016,lacker_leflem2022} and in \cite{librone_vol2}).
In the latter, the flow of measures is stochastic due to a common noise that equally impacts the state dynamics of all players in the underlying $N$-player game.
As a consequence, the flow of measures is expected to be adapted to the filtration generated by the common noise (the so called \emph{strong solutions}); if this is not the case, compatibility conditions between the noises and the flow of measures itself are needed in order to guarantee that the flow $\mu$ picks into the future in a minimal way (the so called \emph{weak solutions}).
In the case of a coarse correlated solution to the MFG, on the other hand, the flow of measures is expected to be stochastic as a result of the mediator's randomization only, which is generated before the beginning of the game. More formally, this implies that the flow of measure is $\F^{0-}$-measurable with $\F^{0-} \subseteq \F_t$ for any $t \geq 0$. Recommendations to the representative player are given according to the mediator's idea of the whole flow, which leads to the consistency condition with conditioning with respect to the whole flow up to terminal time. In this sense, the mediator sees into the future, and consequently no compatibility condition is needed.

One might be tempted to regard the randomness driving the mediator's lottery for selecting recommendations as a common noise that affects the state dynamics only through the control. There are at least two major differences though: First, such a common noise will have no impact on the controls of a deviating player. To put it differently, only the pre-committing players' dynamics are directly affected by the mediator's lottery over strategy profiles.
Second, such a common noise would not be exogenous; instead, it is built into the correlation device used by the mediator, as represented by the auxiliary probability space $(\Omega^0,\F^{0-},\prob^0)$, and as such is part of the solution.
\end{remark}

\begin{example}[Admissible recommendations]\label{mf:example:admissible_recommendations}
Fix a complete probability space $(\Omega^0,\F^{0-},$ $\prob^0)$.
We provide some simple examples of random variables $\Lambda:(\Omega^0,\F^{0-},\prob^0) \to (\A,\boreliani{\A})$ which are admissible recommendations in the sense of Definition \ref{mf:admissible_recommendation}.

\begin{enumerate}[wide]
\item \label{mf:example:finitely_many_values}
Suppose that $\Lambda$ takes only finitely many values, say $\alpha^1,\dots,\alpha^k \in \A$, $k \geq 1$, i.e. $\prob^0(\Lambda=\alpha^i)=p_i$, with $p_i \geq 0$ for every $i=1,\dots,k$, $\sum_{i=1}^k p_i=1$.
We can easily define the associated strategy $(\lambda_t)_{t \in [0,T]}$ as
\begin{equation*}
    \lambda_t(\omega_0,\omega_*)=\sum_{i=1}^k \1_{\insieme{\Lambda=\alpha^i}}(\omega_0)\alpha^i_t(\omega_*).
\end{equation*}
We explicit the dependence upon the scenario $\omega=(\omega_0,\omega_*)$:
\begin{equation*}
    \Lambda(\omega_0)_t(\omega_*) = \sum_{i=1}^k \1_{\insieme{\alpha^i}}(\Lambda(\omega_0))\alpha^i_t(\omega_*).
\end{equation*}
By the same line of reasoning of Remark \ref{finite_players:remark_estensioni}, we have that this process is $\mathbb{F}$-progressively measurable, since the processes $\alpha^i$, $i=1,\dots,k$, are $\mathbb{F}$-progressively measurable and the $\F^{0-}$-measurable real-valued random variables $\1_{\{\alpha^1\}}(\Lambda(\omega_0))$ can be regarded as defined on the product space $\Omega^0\times\Omega^*$ and $\F^{0-}\otimes \{\emptyset, \Omega^*\}$-measurable, therefore $\mathbb{F}$-progressively measurable.
Finally, condition \eqref{mf:uguaglianza_ammissibilita} is satisfied by $\lambda$ itself.

\item Suppose $\Lambda$ takes at most countably many values.
We can define $\lambda$ as
\begin{equation*}
    \lambda_t(\omega_0,\omega_*)= \sum_{i=1}^\infty\1_{\insieme{\Lambda=\alpha^i}}(\omega_0)\alpha^i_t(\omega_*).
\end{equation*}
Set $\lambda^n_t(\omega_0,\omega_*)=\sum_{i=1}^n \1_{\{\Lambda=\alpha^i\}}(\omega_0)\alpha^i_t(\omega_*)$ and observe that, by the same argument of the previous point, $\lambda^n_t$ is an $\mathbb{F}$-progressively measurable process for each $n\geq 1$.
Furthermore, for each $(t,\omega_0,\omega_*) \in [0,T]\times\Omega^0\times\Omega^*$, the sequence $\lambda^n_t(\omega_0,\omega_*)$ is eventually constant, being $(\{\Lambda=\alpha^i\})_{i\geq 1}$ a partition of $\Omega^0$.
Therefore, the sequence $\lambda^n$ converges pointwise to $\lambda=(\lambda_t)_{t \in [0,T]}$.
Being $\lambda$ the pointwise limit of $\lambda^n$, we deduce that $\lambda$ is a progressively measurable process with values in $A$ which satisfies \eqref{mf:uguaglianza_ammissibilita}, so that $\Lambda$ is admissible.

\item Let $(\Omega^0,\F^{0-},\prob^0)$ be a complete probability space, with $\Omega^0$ Polish and $\F^{0-}$ the corresponding Borel $\sigma$-algebra, and let $(\lambda_t)_{t \in [0,T]}$ be an $A$-valued process defined on the $\prob^0\otimes\prob^*$-completion of the product space $(\Omega^0\times\Omega^*,\F^{0-}\otimes\F,\prob^0\otimes\prob^*)$ with values in $A$.
Assume that it is progressively measurable with respect to the $\prob^0\otimes\prob^*$-augmentation of the filtration $\mathbb{F}=(\F^{0-}\otimes\F^*_t)_{t \in [0,T]}$.
We can define a function $\Lambda:\Omega^0\to \A$ by setting
\begin{equation}\label{raccomandazione_indotta}
\begin{aligned}
    \Lambda(\omega_0)=\left \{ \: \begin{aligned}
        & \begin{aligned}
            (\lambda_{t}(\omega_0,\cdot))_{t \in [0,T]}  :\space [0,T] \times \Omega^* & \to A \\
        (t,\omega_*) & \to \lambda_t(\omega_0,\omega_*), 
        \end{aligned} &&  \omega^0 \in \Omega^0\setminus N, \\ 
        & a_0 && \omega_0 \in N.
    \end{aligned} \right.
\end{aligned}
\end{equation}
where $N\subset \Omega^0$ is a $\prob^0$-null set and $a_0$ is an arbitrary point in $A$.
By Lemma \ref{esempi:lemma_misurabile} in Appendix \ref{appendix_recommendations}, the pair $((\Omega^0,\F^{0-},\prob^0),\Lambda)$ is an admissible recommendation, with strategy associated to the recommendation $\Lambda$ given by the process $\lambda$ itself.
\end{enumerate}
\end{example}

\section{Approximate $N$-player coarse correlated equilibria}\label{sec:approximation}

The next result shows how to construct a sequence of approximate $N$-player coarse correlated equilibria with approximation error tending to zero as $N \to \infty$, provided we have a coarse correlated solution to the mean field game.

\begin{theorem}\label{thm_approssimazione}
Let $((\Omega^0,\F^{0-}$, $\prob^0),\Lambda^*,\mu^*)$ be a coarse correlated solution of the mean field game.
For each $N \geq 2$, there exist:
\begin{enumerate}[label=(\roman*)]
    \item an admissible recommendation to the $N$ players $((\Omega^{0,N},\F^{0-,N},\prob^{0,N}),\Lambda^N)$;

    \item a real valued $\eps_N \geq 0$, with $\eps_N \to 0$ as $N \to \infty$,
\end{enumerate}
so that $((\Omega^{0,N},\F^{0-,N},\prob^{0,N}),\Lambda^N)$ is an $\eps_N$-coarse correlated equilibrium for the $N$-player game.
\end{theorem}

The proof of Theorem \ref{thm_approssimazione} has two main steps:
First, starting from a coarse correlated solution to the MFG, we build a probability space $(\overline{\Omega},\overline{\F},\overline{\prob})$ large enough to carry any sequence $(\Lambda^i)_{i \geq 1}$ of admissible recommendations such that
\begin{enumerate}
    \item for every $i$, $\Lambda^i$ is supported on the set of open-loop strategies progressively measurable with respect to player $i$'s private noise;
    \item for every $i$, $\Lambda^i$ has the same distribution as $\Lambda^*$;
    \item for every $N \geq 2$, $(\Lambda^1,\dots,\Lambda^N)$ is exchangeable.
\end{enumerate}
Then, we define the probability space $(\Omega^{0,N},\F^{0-,N},\prob^{0,N})$ as $(\overline{\Omega},\F^{0-,N},\overline{\prob})$, where $\F^{0-,N}$ is the $\overline{\prob}$-completion of $\sigma(\Lambda^1,\dots,\Lambda^N)$.
The construction of the space $(\overline{\Omega},\overline{\F},\overline{\prob})$ strictly depends on the coarse correlated solution we need to approximate.
This is accomplished in Section \ref{sec:approximation:sezione_costruzione_raccomandazioni}.
Then, in Section \ref{sec:approximation:proof_thm}, we prove Theorem \ref{thm_approssimazione}.
In order to exploit the optimality property \eqref{def_mean_field_sol:opt} of the coarse correlated solution, great care is taken in comparing the cost functional associated to open-loop strategies in the $N$-player game with the payoff of the coarse correlated solution. 
We conclude by a propagation of chaos result, which is specific to our situation but quite standard. For completeness, the statement and the proof of such result is deferred to Section \ref{sec:propagation_of_chaos} in the Appendix.

\subsection{Construction of the admissible recommendation profiles to the $N$-player game}\label{sec:approximation:sezione_costruzione_raccomandazioni}

With respect to the probability space $(\Omega^1,\F^1,\prob^1)$ defined in \eqref{finite_players:canonical_space}, let us denote by $\mathbb{F}^{(i)}$ the $\prob^1$-augmentation of the filtration generated by $(\xi^i,W^i)$.
Let us introduce the following set of strategies:
\begin{equation}\label{approximation:strategy_sets}
\begin{aligned}
    & \A_{(i)}=\insieme{\alpha \in \A_N \space \; \vert \; \alpha \text{ is $\mathbb{F}^{(i)}$ progressively measurable}}.
\end{aligned}
\end{equation}
We stress that, by construction, for each $N \geq 2$, open loop strategies for the $N$-player game are defined on the same probability space $(\Omega^1,\F^1,\prob^1)$ and we have the inclusions $\A_N\subseteq \A_{N+1}$ and $\A_{(i)}\subseteq \A_N$ for every $i \leq N$.
Let us denote by $\rho \in \probmeasures{\contpdue}$ the distribution of $\mu^*$.
Let
\begin{equation*}
    K:\F^{0-} \times\contpdue \to [0,1]
\end{equation*}
be the regular conditional probability of $\prob^0$ given $\mu^*$, which exists and is unique since both $(\Omega^0,\F^{0-},\prob^0)$ and $(\contpdue,\mathcal{B})$ are Polish spaces.
Here and in the following, $\mathcal{B}$ stands for the Borel $\sigma$-algebra on $\contpdue$.
Let $\gamma$ denote the joint law of $(\Lambda^*,\mu^*)$ under $\prob^0$, let $\kappa$ be a version of the regular conditional probability of $\gamma$ given $\mu^*$, that is, the stochastic kernel $\kappa:\boreliani{\A}\times\contpdue \to [0,1]$ so that it holds
\begin{equation}\label{approximation:legge_soluzione_disintegrate}
    \prob^0\tonde{(\Lambda^*,\mu^*) \in C \times B}=\int_B \kappa(C,m)\rho(dm) \quad \forall C \in \boreliani{\A}, \; \forall B \in \mathcal{B}.
\end{equation}
Define the probability space $\tonde{\overline{\Omega},\overline{\F}}$ in the following way:
\begin{equation}\label{approximation:spazio_raccomandazioni}
\begin{aligned}
    & \overline{\Omega}= \tonde{ \bigtimes_{1}^\infty \Omega^0} \times \contpdue,
    && \overline{\F}= \tonde{ \bigotimes_{1}^\infty \F^{0-} } \otimes \mathcal{B},
\end{aligned}
\end{equation}
and define $\overline{\prob}$ so that, for every cylinder $R$ with basis $A_1 \times \cdots \times A_N \times B $, with $A_i \in \F^{0-}$ for every $i = 1, \dots, N$, $N \geq 2$, $B \in \mathcal{B}$, it holds
\begin{equation}\label{approximation:legge_spazio_raccomandazioni}
    \overline{\prob}\tonde{ R }=\int_{B} \prod_{i=1}^N K\tonde{A_i,m}\rho(dm).
\end{equation}
We complete the space $(\overline{\Omega},\overline{\F},\overline{\prob})$ with the $\overline{\prob}$-null sets.
Let $\overline{\omega}=((\omega_0^i)_{i \geq 1},m)$ denote a scenario in $\overline{\Omega}$.
Let $\mu: (\overline{\Omega},\overline{\F},\overline{\prob}) \to (\contpdue,\mathcal{B})$ be the projection on $\contpdue$, that is
\begin{equation}\label{approximation:approx_misura}
    \mu\tonde{\overline{\omega}}=m.
\end{equation}

\begin{lemma}\label{approximation:proprieta_raccomandazioni_N_player}
There exists a sequence of recommendations $(\Lambda^i)_{i \geq 1}$ from $(\overline{\Omega},\overline{\F},\overline{\prob})$ to $\bigtimes_{N=1} ^\infty \A_N$ so that, for each $i \geq 1$, the following holds:
\begin{enumerate}[label=(\alph*)]
    \item \label{approximation:proprieta_raccomandazioni_N_player:ammissibilita} $\Lambda^i$ is an admissible recommendation, and it takes values in $\A_{(i)}$.
    \item \label{approximation:proprieta_raccomandazioni_N_player:legge_congiunta_profilo} The joint law of $(\Lambda^{1},\dots,\Lambda^{N})$ under $\overline{\prob}$ is supported on $\bigtimes_{i=1}^N\A_{(i)}\subseteq \A_N^N$ and it is given by
    \begin{equation}\label{legge_eps_equilibrio}
        \gamma_N\tonde{d\alpha^1,\dots,d\alpha^N}= \int_{\contpdue} \bigotimes_{i=1}^N \kappa\tonde{d\alpha^i,m}\rho(dm).
     \end{equation}
     As a consequence, for every $i \geq 1$, $(\Lambda^i,\mu)$ has the same distribution as $(\Lambda^*,\mu^*)$ and  $(\Lambda^i)_{i \geq 1}$ are conditionally independent given $\mu$.
\end{enumerate}
\end{lemma}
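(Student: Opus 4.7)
The plan is to realize each $\Lambda^i$ as a pulled-back copy of $\Lambda^*$ acting on the $i$-th block of initial data and noise in $\Omega^1$. Two simple observations underpin this. First, the projection $\pi_i:\Omega^1\to\Omega^*$, $\omega_1\mapsto(\xi^i(\omega_1),W^i(\omega_1))$, satisfies $\prob^1\circ\pi_i^{-1}=\prob^*$, so pullback yields a linear isometry
\begin{equation*}
    \iota_i:\A\longrightarrow\A_{(i)},\qquad \iota_i(\alpha)_t(\omega_1)=\alpha_t(\pi_i(\omega_1)),
\end{equation*}
which is in particular Borel measurable and, modulo null sets, a bijection onto $\A_{(i)}$. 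Second, by uniqueness of regular conditional laws,
\begin{equation*}
    \kappa(C,m) = K\bigl((\Lambda^*)^{-1}(C),m\bigr)
\end{equation*}
holds for $\rho$-a.e.\ $m\in\contpdue$ and every $C\in\boreliani{\A}$. I then set $\Lambda^i(\overline{\omega}):=\iota_i(\Lambda^*(\omega_0^i))$, where $\omega_0^i$ denotes the $i$-th $\Omega^0$-coordinate of $\overline{\omega}=((\omega_0^j)_{j\geq 1},m)$; measurability of $\Lambda^i$ follows by composing the coordinate projection with the Borel map $\iota_i\circ\Lambda^*$.

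For admissibility of the profile $(\Lambda^1,\dots,\Lambda^N)$ I would lift the jointly measurable strategy $\lambda^*$ associated to $\Lambda^*$, which exists by admissibility of $\Lambda^*$ in the MFG. Setting
\begin{equation*}
    \lambda^j_t(\overline{\omega},\omega_1):=\lambda^*_t(\omega_0^j,\pi_j(\omega_1)),
\end{equation*}
the $\F^{0-}\otimes\F^*_t$-progressive measurability of $\lambda^*$, together with the measurability of $\pi_j:(\Omega^1,\F^{1,N}_t)\to(\Omega^*,\F^*_t)$ valid for $j\leq N$ (since $(\xi^j,W^j)$ is $\F^{1,N}$-adapted), yields $\mathbb{F}$-progressive measurability of $\lambda^j$. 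Since the marginal of $\omega_0^j$ under $\overline{\prob}$ is $\prob^0$, the $\prob^0$-a.s.\ identity $\lambda^*(\omega_0,\cdot)=\Lambda^*(\omega_0)$ in $\A$ transfers to $\lambda^j(\overline{\omega},\cdot)=\iota_j(\Lambda^*(\omega_0^j))=\Lambda^j(\overline{\omega})$ in $\A_{(j)}$ for $\overline{\prob}$-a.e.\ $\overline{\omega}$, which is precisely \eqref{finite_players:uguaglianza_ammissibilita}.

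For the joint law I would evaluate $\overline{\prob}((\Lambda^1,\dots,\Lambda^N)\in B_1\times\cdots\times B_N)$ for Borel sets $B_i\subseteq\A_{(i)}$. Since $\{\Lambda^i\in B_i\}=\{\omega_0^i\in(\Lambda^*)^{-1}(\iota_i^{-1}(B_i))\}$ with $(\Lambda^*)^{-1}(\iota_i^{-1}(B_i))\in\F^{0-}$, formula \eqref{approximation:legge_spazio_raccomandazioni} together with the disintegration identity above gives
\begin{align*}
    \overline{\prob}\bigl(\Lambda^i\in B_i,\ i=1,\dots,N\bigr)
    &= \int_{\contpdue}\prod_{i=1}^N K\bigl((\Lambda^*)^{-1}(\iota_i^{-1}(B_i)),m\bigr)\,\rho(dm)\\
    &= \int_{\contpdue}\prod_{i=1}^N (\iota_i)_\ast\kappa(B_i,m)\,\rho(dm),
\end{align*}
which, modulo the identification $\iota_i$, is exactly \eqref{legge_eps_equilibrio}. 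The case $N=1$ yields $(\Lambda^i,\mu)\stackrel{d}{=}(\Lambda^*,\mu^*)$, and the product-kernel form gives both exchangeability of $(\Lambda^1,\dots,\Lambda^N)$ for every $N$ and conditional independence of $(\Lambda^i)_{i\geq 1}$ given $\mu$.

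The hard part will be the joint $\mathbb{F}$-progressive measurability of $\lambda^j$ on the product space. As stressed after Definition~\ref{def_correlated_strategy}, mere section-wise measurability of $\Lambda^i(\omega_0^i)$ in $\A_{(i)}$ does not by itself guarantee a jointly measurable representative. The construction bypasses this obstacle by starting from the already jointly measurable $\lambda^*$ granted by the admissibility of $\Lambda^*$ in the MFG, and propagating it through the measurable projections $\omega_0^j$ and $\pi_j$.
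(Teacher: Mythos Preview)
Your proposal is correct and follows essentially the same route as the paper: both construct $\lambda^i$ by pulling back the admissible strategy $\lambda^*$ through the $i$-th coordinate projection, and both compute the joint law by unwinding the cylinder formula \eqref{approximation:legge_spazio_raccomandazioni} together with the relation between $K$ and $\kappa$. The only cosmetic differences are that you make the identification $\iota_i:\A\to\A_{(i)}$ explicit and define $\Lambda^i$ first as the composition $\iota_i\circ\Lambda^*\circ(\text{coord}_i)$, whereas the paper defines $\lambda^i$ first and then invokes Lemma~\ref{esempi:lemma_misurabile} to obtain $\Lambda^i$; the paper also spells out more carefully why $\prob^0\otimes\prob^*$-null sets lift to $\prob$-null cylinders, which is the point that makes the augmented progressive measurability of $\lambda^*$ transfer to that of $\lambda^i$ --- you allude to this but do not write it out.
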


\begin{proof}
Recall from \eqref{finite_players:canonical_space} and \eqref{mf:canonical_space} the definitions of the spaces $(\Omega^1,\F^1,\prob^1)$ and $(\Omega^*,\F^*, \linebreak \prob^*)$.
Observe that, up to completion, it holds
\begin{equation}\label{approximation:spazio_rumori}
    \tonde{\Omega^1,\F^1,\prob^1}= \bigotimes_{1}^\infty \tonde{\Omega^*,\F^*,\prob^*}
\end{equation}
so that a scenario $\omega_1 \in \Omega^1$ can be written as $\omega_1=(\omega^j_*)_{j \geq 1}$.
Moreover, by definition of $(\xi^j,W^j)_{j \geq 1}$ in \eqref{canonical_browian_motions}, for every $i \geq 1$ it holds
\begin{equation*}
    (\xi^i,W^i)(\omega_1)=(x^i,w^i)=(\xi^*,W^*)(\omega^i_*),
\end{equation*}
so that $(\xi^i,W^i)_{i\geq 1}$ can be seen as a sequence of independent copies of $(\xi^*,W^*)$.
Define the filtered probability space $(\Omega,\F,\mathbb{F},\prob)$ as in  \eqref{finite_players:condizione_ammissibilita}.
Let $\lambda^i=(\lambda^i_t)_{t \in [0,T]}$, $i \geq 1$, be independent copies of $\lambda^*=(\lambda^*_t)_{t \in [0,T]}$, the strategy associated to the admissible recommendation $\Lambda^*$ according to \eqref{mf:controllo_indotto}, so that
\begin{equation*}
    \lambda^i_t(\overline{\omega},\omega_1)= \lambda^i_t((\omega^j_0)_{j \geq 1},m,(\omega^j_*)_{j \geq 1})=\lambda^*_t\tonde{\omega_0^i,\omega^i_*}.
\end{equation*}
For every $i$, $\lambda^i$ is $\mathbb{F}$-progressively measurable: indeed, since by definition the measures $\prob$ and $\prob^0 \otimes \prob^*$ coincide on the cylinders $A_i$ of the form
\begin{equation}\label{approximation:cilindro_iesimo}
    A_i=\insieme{ (\overline{\omega},\omega_1)=((\omega_0^j)_{j \geq 1} ,m,(\omega_*^j)_{j \geq 1}) \in \overline{\Omega}\times\Omega^1 \; \vert \; (\omega_0^i,\omega_*^i) \in G},
\end{equation}
for any $G \in \F^{0-} \otimes \F^1$, every $\prob^0 \otimes \prob^*$-null set $N$ can be identified with a $\prob$-null cylinder $A_i$ of the form \eqref{approximation:cilindro_iesimo} with basis $N$.
Therefore, for every $t \in [0,T]$, the $\prob$-augmentation of the filtration $\F^{0-}\otimes\F^i_t$ contains all the cylinders with basis 
\begin{equation*}
    A_i=\insieme{ (\overline{\omega},\omega_1)=((\omega_0^j)_{j \geq 1} ,m,(\omega_*^j)_{j \geq 1}) \in \overline{\Omega}\times\Omega^1 \; \vert \; (\omega_0^i,\omega_*^i) \in G},
\end{equation*}
for any $G$ in the $\prob^0 \otimes \prob^*$-augmentation of $\F^{0-} \otimes \F^*_t$. 
This is enough to conclude that $\lambda^i$ is progressively measurable with respect to the $\prob$-augmentation of $\F^{0-} \otimes \F^i_t$, and so with respect to the filtration $\mathbb{F}$ as well.
We define $\Lambda^i$ as in \eqref{raccomandazione_indotta}, that is
\begin{equation}\label{approximation:approx_raccomandazione}
\begin{aligned}
    \Lambda^i(\overline{\omega})=\left \{ \: \begin{aligned}
        & \begin{aligned}
            (\lambda^i_{t}(\overline{\omega},\cdot))_{t \in [0,T]} &  :\space [0,T] \times \Omega^1 \to A \\
        & (t,\omega_1) \to \lambda^i_t(\overline{\omega},\omega_1), 
        \end{aligned} &&  \overline{\omega} \in \overline{\Omega}\setminus \mathcal{N}, \\
        & a_0 && \overline{\omega} \in \mathcal{N},
    \end{aligned} \right.
\end{aligned}
\end{equation}
where $\mathcal{N} \subseteq \overline{\Omega}$ is a $\overline{\prob}$-null set and $a_0$ is an arbitrary point in $A$.
By Lemma \ref{esempi:lemma_misurabile}, $\Lambda^i$ is an admissible recommendation from $(\overline{\Omega},\overline{\F},\overline{\prob})$ to $(\A_N,\boreliani{\A_N})$, for every $N \geq i$.
Since the associated strategies coincide pointwise, it holds $\Lambda^i(\overline{\omega})=\Lambda^*(\omega_0^i)$ 
$\overline{\prob}$-a.s., as ensured by Proposition \ref{esempi:unicita_strategia_associata}.
In particular, this implies that $\Lambda^i$ only takes values in $\A_{(i)}$, since for every fixed $\overline{\omega}$ the control process $(\lambda^i_{t}(\overline{\omega},\cdot))_{t \in [0,T]}$ is $\mathbb{F}^{(i)}$-progressively measurable.
This proves point \ref{approximation:proprieta_raccomandazioni_N_player:ammissibilita}.

As for point \ref{approximation:proprieta_raccomandazioni_N_player:legge_congiunta_profilo}, for every $N \geq 2$, $(\Lambda^1,\dots,\Lambda^N)$ takes values in $\bigtimes_{j=1}^N\A_{\tonde{j}}$ by construction.
Hence, we may restrict the attention to Borel sets $C_j \subseteq \A_{\tonde{j}}$, for every $j=1,\dots,N$.
Let $B \in \mathcal{B}$.
Since  $\Lambda^j(\overline{\omega})=\Lambda^*(\omega^j_0)$ for every $j=1,\dots,N$ $\overline{\prob}$-a.s., by definition of $\overline{\prob}$, we have
\begin{equation*}
\begin{aligned}
    \overline{\prob} & \tonde{\Lambda^{1} \in C_1, \dots, \Lambda^N \in C_N, \mu \in B} = \overline{\prob}\tonde{\bigtimes_{j=1}^N\insieme{\omega^j_0: \; \Lambda^*(\omega_0^j) \in C_j} \; \times \bigtimes_{j=N+1}^\infty \Omega^0 \; \times \;  \contpdue } \\
    & = \int_B \prod_{j=1}^N K\tonde{\insieme{\omega^j_0:\;\Lambda^* (\omega_0^j) \in C_j},m} \rho(dm) = \int_B \prod_{j=1}^N K\tonde{\insieme{\omega_0:\;\Lambda^*\tonde{\omega_0} \in C_j},m} \rho(dm)  \\
    & = \int_{B} \prod_{j=1}^N \kappa\tonde{C_j, m} \rho(dm).
\end{aligned}
\end{equation*}
This shows also that $(\Lambda^i,\mu)$ are identically distributed as $(\Lambda^*,\mu^*)$ and that $(\Lambda^i)_{i\geq 1}$ are conditionally i.i.d. given $\mu$.
\end{proof}
For each $N\geq 2$, set $\F^{0-,N}=\sigma(\Lambda^1,\dots,\Lambda^N)$ and $(\Omega^{0,N},\F^{0-,N},\prob^{0,N})=(\overline{\Omega},\F^{0-,N},\overline{\prob})$.
Then,  
\begin{equation*}
    \tonde{\Lambda^1,\dots,\Lambda^N}:\tonde{\overline{\Omega},\F^{0-,N},\overline{\prob}} \to \tonde{\A_N^N, \boreliani{\A_N^N}}
\end{equation*}
is the candidate $\eps_N$-coarse correlated equilibrium to the $N$-player game, with $\eps_N$ to be determined.

\begin{remark}
The construction of the probability spaces $(\Omega^{0,N},\F^{0-,N},\prob^{0,N})$ is rather involved, but has the advantage of making the admissible recommendation to the $N$-players $(\Lambda^1,\dots,\Lambda^N)$ easy to define.
Besides this technical reason, we notice that, both in the $N$-player game and in the mean field game, the mediator may choose the space $(\Omega^0,\F^{0-},\prob^0)$ he uses to randomize players' strategies, as already pointed out in Sections \ref{sezione_formulazione_N_giocatori} and \ref{sezione_formulazione_mfg}.
Then, it is natural to use the same space $(\Omega^0,\F^{0-},\prob^0)$ on which the coarse correlated solution to the mean field game is defined to randomize players' strategies in the $N$-player game as well.
\end{remark}

\subsection{Proof of Theorem \ref{thm_approssimazione}}\label{sec:approximation:proof_thm}
By symmetry, let us consider only possible deviations of player $i=1$.
For every $N\geq 2,$ let $\eps_N$ be given by
\begin{equation}
    \eps_N :=\sup_{\beta \in \A_N}\tonde{\J^N_1(\Lambda^N) - \J^N_1(\Lambda^{N,-1},\beta)}=\J^N_1(\Lambda^N) - \inf_{\beta \in \A_N}\J_N^1(\Lambda^{N,-1},\beta).
\end{equation}
By definition of $\eps_N$, $\Lambda^N$ is an $\eps_N$-coarse correlated equilibrium for every $N \geq 2$.
In order to conclude the proof, we only need to prove that $\eps_N \to 0$ as $N \to \infty$.
For each $N \geq 2$, choose $\beta^N \in \A_N$ so that
\begin{equation*}
    \J^N_1(\Lambda^{N,-1},\beta^N) \leq  \inf_{\beta \in \A_N}\J^N_1(\Lambda^{N,-1},\beta) - \frac{1}{N}.
\end{equation*}
Let $Z=(Z_t)_{t \in [0,T]}$ be the solution of
\begin{equation}\label{approximation:equazione_Z}
    dZ_t=b(t,Z_t,\mu_t,\beta^N_t)dt + dW^1_t, \quad Z_0=\xi^1,
\end{equation}
and define the corresponding cost as
\begin{equation*}
    \J(\beta^N,\mu)=\E\quadre{\int_0^T f(s,Z_s,\mu_s,\beta^N_s)ds + g(Z_T,\mu_T)}.
\end{equation*}
Let $X$ be the solution of 
\begin{equation}\label{equazione_ottima}
    dX_t=b(t,X_t,\mu_t,\lambda^1_t)dt + dW^1_t, \quad X_0=\xi^1,
\end{equation}
with associated cost
\begin{equation*}
    \J(\Lambda^1,\mu)=\E\quadre{\int_0^T f(s,X_s,\mu_s,\lambda^1_s)ds + g(X_T,\mu_T)}.
\end{equation*}
Observe that, by construction,  $(\xi^1,W^1,\mu,\lambda^1)$ under $\prob$ is distributed as $(\xi,W,\mu^*,\lambda^*)$ under $\prob^*$.
Therefore, by Theorem \ref{teorema_di_unicita_legge} the joint distribution of $(X,\lambda^1,\mu)$ under $\prob$ is the same as $(X^*,\lambda^*,\mu^*)$ under $\prob^*$, where $X^*$ denotes the state process resulting from the mean field CCE $(\Lambda^*,\mu^*)$.
Moreover, note that by construction $\lambda^1$ is $\mathbb{F}^{\mu,\xi^1,W^1}$-progressively measurable, where
\begin{equation*}
    \F^{\mu,\xi^1,W^1}_t= \sigma(\mu)\vee\sigma(\xi^1)\vee\sigma(W^1_s: \; s \leq t), \; t \in [0,T].
\end{equation*}
By the Lipschitz continuity of $b$, $X$ may be taken to be $\mathbb{F}^{\mu,\xi^1,W^1}$-adapted as well.

To prove the theorem, it is enough to show the following:
\begin{subequations}
\label{conv_costi:optim}
\begin{align}
    & \J(\Lambda^1,\mu) \leq \J(\beta^N,\mu) \label{conv_costi:minimo_limite}, \\
    & \lim_{N \to \infty} \J_N^1(\Lambda^N)=\J(\Lambda^1,\mu),  \label{conv_costi:convergenza_lambda} \\
    & \lim_{N \to \infty} \abs{\J_N^1(\Lambda^{N,-1},\beta^N) - \J(\beta^N,\mu)} = 0. \label{conv_costi:convergenza_beta}
\end{align}
\end{subequations}
These imply the conclusion, by noticing that
\begin{equation*}
    \eps_N \leq  \J^1_N(\Lambda^N) - \J(\Lambda^1,\mu) +\J(\Lambda^1,\mu) - \J(\beta^N,\mu) + \J(\beta^N,\mu) - \J^1_N(\Lambda^{N,-1},\beta^N) - \frac{1}{N}. 
\end{equation*}

We start by proving \eqref{conv_costi:minimo_limite}.
We observe that we cannot just deduce it from the optimality property \eqref{def_mean_field_sol:opt} of $(\Lambda^*,\mu^*)$: since $\beta^N$ may belong to $\A_N \setminus \A_{(1)}$, it may not be identifiable with an open loop strategy for the MFG, for which inequality \eqref{conv_costi:minimo_limite} would hold.
Instead, we prove it by using the regular conditional probability of $\prob$ given $(\xi^i,W^i)_{i=2}^N$.
Denote by $(\bm{x},\bm{w})$ a point $(x^i,w^i)_{i=2}^N \in (\R^d \times \contrd)^{N-1}$, and let $P_\nu=\bigotimes_1^{N-1}(\nu \otimes \wienermeasure)$ denote the joint law of $(\xi^i,W^i)_{i=2}^N$ under $\prob$.
Let $\prob^{\bm{x},\bm{w}}$ be a version of the regular conditional probability of $\prob$ given $(\xi^i,W^i)_{i=2}^N=(x^i,w^i)_{i=2}^N$.
We rewrite \eqref{conv_costi:minimo_limite} as
\begin{equation}\label{conv_costi:int_disintegrato}
\begin{aligned}
    \J & (\Lambda^1,\mu)-\J(\beta^N,\mu) =\\
     & = \E \quadre{\tonde{\int_0^T f(s,X_s,\mu_s,\lambda^1_s)ds + g(X_T,\mu_T)} - \tonde{\int_0^T f(s,Z_s,\mu_s,\beta^N_s)ds + g(Z_T,\mu_T) }} \\
    & =  \E  \left[ \E\left[ \tonde{\int_0^T f(s,X_s,\mu_s,\lambda^1_s)ds + g(X_T,\mu_T)} \right. \right. \\
    & \quad \left. \left. - \tonde{\int_0^T f(s,Z_s,\mu_s,\beta^N_s)ds + g(Z_T,\mu_T)} \Big \vert (\xi^i,W^i)_{i=2}^N \right] \right] \\
    & = \int_{(\R^d \times \contrd)^{N-1}} \left( \E^{\prob^{\bm{x},\bm{w}}}\left[\int_0^T f(s,X_s,\mu_s,\lambda^1_s)ds + g(X_T,\mu_T) \right] \right. \\
    & \quad - \left. \E^{\prob^{\bm{x},\bm{w}}}\left[ \int_0^T f(s,Z_s,\mu_s,\beta^N_s)ds + g(Z_T,\mu_T) \right] \right) P_\nu(d\bm{x},d\bm{w}).
\end{aligned}
\end{equation}
We analyse separately the two terms in the last equality.
Let us start with the term depending upon $\lambda^1$.
Since $\mu$, $\lambda^1$, $W^1$ and $\xi^1$ are independent of $(\xi^i,W^i)_{i=2}^N$  under $\prob$ and $X$ is $\mathbb{F}^{\mu,\xi^1,W^1}$-adapted, $X$ is independent of $(\xi^i,W^i)_{i=2}^N$ as well.
We deduce that, under $\prob^{\bm{x},\bm{w}}$, $W^1$ is an $\mathbb{F}$-Brownian motion, $X$ solves equation \eqref{equazione_ottima} and 
$\prob^{\bm{x},\bm{w}}\circ(X,\lambda^1,\mu)^{-1}=\prob\circ(X,\lambda^1,\mu)^{-1}=\prob^*\circ(X^*,\lambda^*,\mu^*)^{-1}$, for $P_\nu$-a.e. $(\bm{x},\bm{w}) \in (\R^d \times \contrd)^{N-1}$.
In particular, this implies that
\begin{equation}\label{funzionale_condizionato1}
\begin{aligned}
    \E^{\prob^{\bm{x},\bm{w}}}& \quadre{\int_0^T f(s,X_s,\mu_s,\lambda^1_s)ds + g(X_T,\mu_T)}=\E^{\prob^*}\quadre{\int_0^T f(s,X^*_s,\mu^*_s,\lambda^*_s)ds + g(X^*_T,\mu^*_T)}\\
    & =\J(\Lambda^*,\mu^*)
\end{aligned}
\end{equation}
for $P_\nu$-a.e. $(\bm{x},\bm{w})\in (\R^d \times \contrd)^{N-1}$.

\smallskip
As for the term depending upon $\beta^N$, we note that, since $\beta^N \in \A_N$, there exists a progressively measurable functional $\hat{\beta}:[0,T] \times (\R^d \times \contrd)^N \to A$ so that
\begin{equation*}
    \beta^N_t=\hat{\beta}\tonde{t,\xi^1,W^1,\dots,\xi^N,W^N}.
\end{equation*}
Under $\prob^{\bm{x},\bm{w}}$, it holds
\begin{equation}\label{strategia_condizionata1}
    \beta^N_t= \hat{\beta}\tonde{t,\xi^1,W^1,x^2,w^2,\dots,x^N,w^N} \; \forall t \in [0,T] \quad \prob^{\bm{x},\bm{w}}\text{-a.s.},
\end{equation}
since $(\xi^i,W^i)_{i=1}^2$ are almost surely constant under $\prob^{\bm{x},\bm{w}}$.
Since the joint law of $\mu$, $W^1$ and $\xi^1$ is the same under both $\prob$ and $\prob^{\bm{x},\bm{w}}$, \eqref{strategia_condizionata1} implies that $Z$ satisfies
\begin{equation}\label{conv_costi:differenziale_zeta}
    dZ_t=b(t,Z_t,\mu_t,\hat{\beta}^N_t(\bm{x},\bm{w}))dt + dW^1_t, \quad Z_0=\xi^1,
\end{equation}
under $\prob^{\bm{x},\bm{w}}$, with $\hat{\beta}^N(\bm{x},\bm{w})$ $\mathbb{F}^{(1)}$-progressively measurable.
For every $(\bm{x},\bm{w}) \in (\R^d \times \contrd)^{N-1}$, define the strategy
\begin{equation}
    \Tilde{\beta}\tonde{\bm{x},\bm{w}}=\tonde{ \hat{\beta}\tonde{t,\xi,W,x^2,w^2,\dots,x^N,w^N}}_{t \in [0,T]}.
\end{equation}
Then $\Tilde{\beta}(\bm{x},\bm{w})$ belongs to $\A$  for every $(\bm{x},\bm{w})\in(\R^d\times\contrd)^{N-1}$, and it depends measurably upon $(\bm{x},\bm{w})$.
For every $(\bm{x},\bm{w})$, let $\Tilde{Z}$ be the solution of 
\begin{equation*}
    d\Tilde{Z}_t=b(t,\Tilde{Z}_t,\mu^*_t,\Tilde{\beta}_t(\bm{x},\bm{w}))dt + dW_t, \quad \Tilde{Z}_0=\xi.
\end{equation*}
Since $\prob^{\bm{x},\bm{w}}\circ(Z,\beta,\mu)^{-1}=\prob^{\bm{x},\bm{w}}\circ(Z,\hat{\beta}^N(\bm{x},\bm{w}),\mu)^{-1}=\prob^*\circ(\Tilde{Z},\Tilde{\beta}(\bm{x},\bm{w}),\mu^*)^{-1}$, it follows that 
\begin{equation}\label{funzionale_condizionato2}
\begin{aligned}
    \E^{\prob^{\bm{x},\bm{w}}} & \bigg[ \int_0^T f(s,Z_s,\mu_s,\beta^N_s)ds + g(Z_T,\mu_T) \bigg]  \\
    & = \E^{\prob^{\bm{x},\bm{w}}}\bigg[\int_0^T f(s,Z_s,\mu_s,\hat{\beta}^N_s(\bm{x},\bm{w})) ds + g(Z_T,\mu_T)\bigg]\\
    & = \E^{\prob^*} \bigg[\int_0^T f(s,\hat{Z}_s,\mu^*_s,\Tilde{\beta}_s(\bm{x},\bm{w}))ds + g(\hat{Z}_T,\mu^*_T) \bigg] =\J(\Tilde{\beta}(\bm{x},\bm{w}),\mu^*).
\end{aligned}
\end{equation}
We note that the left-hand side of \eqref{funzionale_condizionato1} depends measurably upon $(\bm{x},\bm{w})$ due to a monotone class argument.
Being $(\Lambda^*,\mu^*)$ a mean field CCE by assumption, \eqref{funzionale_condizionato1} and \eqref{funzionale_condizionato2} imply that
\begin{equation*}
\begin{aligned}
    \J(\Lambda^1,\mu)-\J(\beta^N,\mu) = & \int  \left( \E^{\prob^{\bm{x},\bm{w}}}\left[\int_0^T f(s,X_s,\mu_s,\lambda^1_s)ds + g(X_T,\mu_T) \right] \right. \\
    & - \left. \E^{\prob^{\bm{x},\bm{w}}}\left[ \int_0^T f(s,Z_s,\mu_s,\beta^N_s)ds + g(Z_T,\mu_T) \right] \right) P_\nu(d\bm{x},d\bm{w}) \\
    = & \int  \left( \J(\Lambda^*,\mu^*) -\J(\Tilde{\beta}(\bm{x},\bm{w}),\mu^*) \right) P_\nu(d\bm{x},d\bm{w}) \leq 0,
\end{aligned}
\end{equation*}
which yields \eqref{conv_costi:minimo_limite}.

\smallskip
As for \eqref{conv_costi:convergenza_lambda} and \eqref{conv_costi:convergenza_beta}, they must be handled by continuity arguments on the cost functions and propagation of chaos as stated in Lemma \ref{lemma_poc}.
We give the details only for \eqref{conv_costi:convergenza_lambda}, since \eqref{conv_costi:convergenza_beta} is analogous.
We have:
\begin{equation*}
\begin{aligned}
    \abs{\J^1_N(\Lambda^N) - \J(\Lambda^1,\mu)} & \leq \E\left[\int_0^T \abs{f(t,X^{1,N}_t,\mu^N_t,\lambda^1_t) - f(t,X_t,\mu_t,\lambda^1_t) } dt \right. \\
     & \quad +  \abs{g(X^N_T,\mu_T^N) - g(X_T,\mu_T)} \bigg ] = \attesa{\Delta f + \Delta g}.
\end{aligned}
\end{equation*}
For $\Delta f$, Assumptions \ref{standing_assumptions} ensure that $f$ is locally Lipschitz with at most quadratic growth.
Therefore, by straightforward estimates, we have:
\begin{equation*}
\begin{aligned}
    & \E\quadre{\Delta f} \leq C \E \left[ 1 + \norm{X^{1,N}}_{\contrd}^2 + \norm{X}_{\contrd}^2 +  \int_0^T \int_{\R^d}\abs{y}^2 \mu_t(dy)dt + \int_0^T \int_{\R^d}\abs{y}^2 \mu^N_t(dy)dt \right. \\
    & \quad \left. + 2 \int_0^T \abs{\lambda^1_t} dt \right]^\frac{1}{2} \cdot \E\left[  \tonde{\norm{X^{1,N} - X}^2_{\contrd}  + \int_0^T\pwassmetric{2}{\R^d}{2}(\mu^N_t,\mu_t)dt}  \right]^\frac{1}{2} \\
    & \leq  C \tonde{1 + \max_{k=1,\dots,N}\E \quadre{ \norm{X^{k,N}}_{\contrd}^2}^\frac{1}{2} + \E \quadre{\norm{X}_{\contrd}^2}^\frac{1}{2}  }\\
    & \quad \cdot \tonde{ \E\quadre{ \norm{X^{1,N} - X}_{\contrd}^2 }^\frac{1}{2}+ \sup_{t \in [0,T]}\attesa{\pwassmetric{2}{\R^d}{2}(\mu^N_t,\mu_t)}^{\frac{1}{2}} }. \\
\end{aligned}
\end{equation*}
By Lemma \ref{lemma_poc}, the right-hand side tends to $0$ as $N$ goes to infinity. The convergence of $\E[\Delta g]$ is shown analogously.

\section{Existence of a coarse correlated solution of the mean field game}\label{sezione_existence}

The main result of this section regards the existence of a coarse correlated solution of the MFG, which requires the following additional assumption:
\begin{customassumption}{\textbf{B}}\label{convexity_assumptions}
For every $(t,x,m) \in [0,T] \times \R^d \times \pwassspace{2}{\R^d}$, the set
\begin{equation}\label{convexity_assumptions:insieme_K}
    K(t,x,m)=\insieme{ (b(t,x,m,a),z): \; a \in A, \; f(t,x,m,a) \leq z} \subseteq \R^d \times \R
\end{equation}
is closed and convex.
\end{customassumption}
Assumption \ref{convexity_assumptions} is standard when dealing with relaxed controls (see, e.g., \cite{el_karoui_compactification} and \cite{haussmann1990sicom} for relaxed controls in control theory, and the series of works \cite{carmona2016commonnoise,lacker2015martingale,lacker2016,lacker2020convergence}, or \cite{campi18absorption}, in mean field games literature).
Similarly to these works, we will use relaxed controls to apply compactness arguments, and we will use Assumption \ref{convexity_assumptions} to come back to strict controls.
We refer to Section \ref{existence:sezione_crtl_rilassati} in the Appendix for a brief recap on relaxed controls.

\smallskip
The main result of this section is the following:
\begin{theorem}[Existence of a coarse correlated solution of the MFG]\label{existence:main_theorem}
In addition to Assumptions \ref{standing_assumptions}, suppose that Assumption \ref{convexity_assumptions} holds.
Then there exists a coarse correlated solution of the mean field game.
\end{theorem}
The road map to prove Theorem \ref{existence:main_theorem} is as follows:
Taking inspiration from \cite{hart_schmeidler, nowak1992correlated, Nowak1993} and \cite[Appendix 1.B]{bonesini_thesis}, we associate a zero-sum game to the search of a mean field CCE.
Loosely speaking, the game should be of the following type: player A, the maximizer, chooses a correlated flow $(\Lambda,\mu)$, while player B chooses a deviating strategy $\beta \in \A$.
The payoff functional is the following:
\begin{equation}\label{existence:payoff_informale}
    F[(\Lambda,\mu),\beta] =\J(\beta,\mu)-\J(\Lambda,\mu).
\end{equation}
Player A aims at maximizing $F$, while player B chooses her strategy in order to minimize $F$.
In order to get an equilibrium, one should restrict to correlated flows $(\Lambda,\mu)$ so that the consistency condition \eqref{def_mean_field_sol:cons} is satisfied.
If we could show that the game has a positive value and player A has an optimal strategy $(\Lambda^*,\mu^*)$ , then we would have established that such a strategy would satisfy the optimality property \eqref{def_mean_field_sol:opt} as well, and therefore $(\Lambda^*,\mu^*)$ would be a mean field CCE.
In order to get a convenient structure for the sets of strategies and good continuity and convexity properties of the payoff functionals, in Section \ref{sec_existence:sec_auxiliary} we define a more general zero-sum game, in which we embed our auxiliary problem.
As shown in Proposition \ref{existence:prop_relazione_mfg}, the auxiliary zero-sum game extends the payoff functional in equation \eqref{existence:payoff_informale}.
Particular care is needed in dealing with the term depending both on $\beta$ and $\mu$, since it must reflect independent strategy choices of the opponents.
Using Fan's minimax theorem, we will show that the auxiliary game has positive value and admits an optimal strategy for the maximizing player. This is the content of Theorem \ref{existence:thm_esistenza_strategia_ottima}.
Finally, in Section \ref{sec_existence:sec_proof} we prove Theorem \ref{existence:main_theorem}, by using such an optimal strategy to induce a coarse correlated solution of the mean field game.
Many technical lemmata are needed to prove this existence result. For reader's convenience, most of the proofs and some statements are deferred to Appendix \ref{appendix_existence}.

\begin{remark}\label{existence:rmk_existence_result}
Under similar assumptions to Assumptions \ref{standing_assumptions}, the existence of weak or also strong MFG solutions has been established in the literature; see, for instance, \cite{lacker2015martingale,lacker2016,lacker_leflem2022}.
As already noticed, strong MFG solutions are coarse correlated solutions to the MFG as well, and, at least in some cases, this is also true for weak MFG solutions (see upcoming Section \ref{sec:comparions_weak_mfg}).
Nevertheless, we think that the existence result of Theorem \ref{existence:main_theorem} is of independent interest, for two main reasons.
Firstly, the proof shows directly the existence of a coarse correlated solutions without relying on existence results for stronger notions of equilibria, which are usually based on fixed point theorems. Such a direct proof is instead based on a minimax theorem, which, to the best of our knowledge, is used for the first time in continuous time MFG literature.
Secondly, since we reduce the search of a coarse correlated solution to the MFG to finding an equilibrium for an auxiliary linear zero-sum game on spaces of measures, our formulation paves the way to the use of linear programming methods for computing mean field CCEs. We further discuss this point in the following Remark \ref{existence:rmk_linear_programming}.
\end{remark}

\subsection{The auxiliary zero-sum game}\label{sec_existence:sec_auxiliary}

We now formally define the auxiliary zero-sum game.
\begin{definition}[Strategies for player A]\label{existence:strategie_max}
A \emph{strategy for player A} is a probability measure $\Gamma \in \mathcal{P}(\contrd \times \mathcal{V} \times \contpdue)$ so that there exists a tuple $\mathfrak{U}=((\Omega,\F,\mathbb{F},\prob),\xi,W,\mu,\mathfrak{r})$ with the following properties:
\begin{enumerate}[label=(\roman*)]
    \item $(\Omega,\F,\mathbb{F},\prob)$ is a filtered probability space satisfying the usual assumptions; $\Omega$ is Polish and $\F$ is its corresponding Borel $\sigma$-algebra.
    \item $W$ is an $\mathbb{F}$-Brownian motion and $\xi$ is an $\F_0$-measurable independent $\R^d$-valued random variable with law $\nu$.
    \item $\mu$ is an $\F_0$-measurable random variable with values in $\contpdue$; it is independent of both $\xi$ and $W$.
    \item $\mathfrak{r}$ is an $\mathbb{F}$-progressively measurable relaxed control $\mathfrak{r}=(\mathfrak{r}_t)_{t \in [0,T]}$ with values in $A$.
    \item Let $X$ be the solution of
    \begin{equation}\label{existence:eq_processo_K}
        dX_t=\int_A b(t,X_t,\mu_t,a)\mathfrak{r}_t(da)dt + dW_t, \; t \in [0,T], \quad X_0=\xi.
    \end{equation}
    Then $\mu_t(\cdot)=\prob(X_t \in \cdot \; \vert \; \mu)$ $\prob$-a.s for every $t \in [0,T]$.
    
    \item $\Gamma$ is the joint law under $\prob$ of  $X$, $\mu$ and $\mathfrak{r}$: $\Gamma=\prob\circ(X,\mathfrak{r},\mu)^{-1}$.
\end{enumerate}
We denote by $\mathcal{K}$ the set of strategies for player A.
\end{definition}
We observe that, by Assumptions \ref{standing_assumptions}, there exists a unique solution to equation \eqref{existence:eq_processo_K} for every tuple $\mathfrak{U}$ satisfying properties (i-iv).

\begin{definition}[Strategies for player B]\label{existence:strategie_min}
A stochastic kernel $\Sigma$ from $\contpdue$ to $\contrd \times \mathcal{V}$ is a \emph{strategy for player B} if there exists a tuple $\mathfrak{U}=((\Omega,\F,\mathbb{F},\prob),\xi,W,\mathfrak{r})$ so that
\begin{enumerate}[label=(\roman*)]
    \item $(\Omega,\F,\mathbb{F},\prob)$ is a filtered probability space satisfying the usual assumptions; $\Omega$ is Polish and $\F$ is its corresponding Borel $\sigma$-algebra.
    \item $W$ is an $\mathbb{F}$-Brownian motion and $\xi$ is an $\F_0$-measurable independent $\R^d$-valued random variable with law $\nu$.
    \item $\mathfrak{r}$ is an $\mathbb{F}$-progressively measurable relaxed control $\mathfrak{r}=(\mathfrak{r}_t)_{t \in [0,T]}$ with values in $A$.
    \item For every $m \in \contpdue$, $\Sigma(\cdot, m) \in \mathcal{P}(\contrd \times \mathcal{V})$ is the joint law under $\prob$ of $(X^m,\mathfrak{r})$, where $X^m$ is the solution to
    \begin{equation}\label{existence:eq_processo_Q}
    dX^m_t=\int_A b(t,X^m_t,m_t,a)\mathfrak{r}_t(da)dt + dW_t, \quad X_0=\xi,
    \end{equation}
    that is:
    \begin{equation}
        \Sigma(B,m)=\prob((X^m,\mathfrak{r}) \in B) \quad \forall m \in \contpdue, B \in \boreliani{\contrd} \otimes \boreliani{\mathcal{V}}.
    \end{equation}
\end{enumerate}
We denote by $\mathcal{Q}$ the set of strategies for player B.
\end{definition}
By Lemma \ref{existence:lemma_kernels}, the set of strategies $\mathcal{Q}$ for player B is well defined in the sense that
the map $\Sigma$ is truly a stochastic kernel.

We now define the payoff functional $\mathfrak{p}$ for the zero-sum game.
Let us introduce the function $\mathfrak{F}: \contrd \times \mathcal{V}\times \contpdue \to  \R$ defined by
\begin{equation}\label{existence:funzione_f}
\begin{aligned}
    \mathfrak{F} (y,q,m)=\int_0^T\int_A f(t,y_t,m_t,a)q_t(da)dt + g(y_T,m_T).
\end{aligned}
\end{equation}
\begin{definition}[Auxiliary zero-sum game]\label{existence:def_zerosum}
The \emph{auxiliary zero-sum game} is a zero-sum game where:
\begin{itemize}
    \item The set of strategies for player A, the maximizer, is the set $\mathcal{K}$ introduced in Definition \ref{existence:strategie_max}.
    \item The set of strategies for player B, the minimizer, is the set $\mathcal{Q}$ introduced in Definition \ref{existence:strategie_min}.
    \item The payoff functional is the function $\mathfrak{p}:\mathcal{K}\times\mathcal{Q}\to \R$ defined as
    \begin{equation}\label{existence:payoff_functional}
    \begin{aligned}
        \mathfrak{p}(\Gamma,\Sigma) = & \int_{\contrd \times \mathcal{V} \times \contpdue} \mathfrak{F} (y,q,m) \Sigma(dy,dq,m)\rho(dm)\\
        & - \int_{\contrd \times \mathcal{V} \times \contpdue} \mathfrak{F} (y,q,m) \Gamma(dy,dq,dm),
    \end{aligned}
    \end{equation}
    where $\rho$ denotes the marginal of $\Gamma$ on $\contpdue$.
\end{itemize}
We denote the lower and upper values of the game as, respectively, $v^A$ and $v^B$: 
\begin{equation*}
\begin{aligned}
    & v^A=\sup_{\Gamma \in \mathcal{K}} \inf_{\Sigma \in \mathcal{Q}} \mathfrak{p}(\Gamma,\Sigma), && \quad 
    v^B= \inf_{\Sigma \in \mathcal{Q}} \sup_{\Gamma \in \mathcal{K}} \mathfrak{p}(\Gamma,\Sigma).
\end{aligned}
\end{equation*}
If the lower and upper values of the game are equal, we set $v=v^A=v^B$ and call $v$ the value of the game.
We say that a strategy $\Gamma^* \in \mathcal{K}$ is optimal for player A if
\begin{equation*}
    \inf_{\Sigma \in \mathcal{Q}}\mathfrak{p}(\Gamma^*,\Sigma)=\max_{\Gamma \in \mathcal{K}}\inf_{\Sigma \in \mathcal{Q}}\mathfrak{p}(\Gamma,\Sigma).
\end{equation*}
\end{definition}

The next result ensures existence of an optimal strategy for the maximizing player:
\begin{theorem}[Existence of the value of the game and of an optimal strategy for the maximizing player]\label{existence:thm_esistenza_strategia_ottima}
Consider the game described in Definition \ref{existence:def_zerosum}.
The following holds:
\begin{enumerate}[label=(\roman*)]
    \item \label{existence:thm_esistenza_strategia_ottima:existence_value} The game has a value, i.e. $v^A=v^B$.
    
    \item \label{existence:thm_esistenza_strategia_ottima:optimal_strategy} There exists a strategy $\Gamma^* \in \mathcal{K}$ which is optimal for player $A$.

    \item \label{existence:thm_esistenza_strategia_ottima:positive_value} The value $v$ of the game is non negative: $v \geq 0$.

\end{enumerate}
\end{theorem}
The proof of this theorem is deferred to Section \ref{existence:sezione_opt_strat_auxiliary} in Appendix \ref{appendix_existence}.
The last result of this section shows that, for every correlated flow $(\Lambda,\mu)$ so that consistency condition \eqref{def_mean_field_sol:cons} is satisfied and every deviation $\beta \in \A$, there exists a pair of strategies $(\Gamma,\Sigma) \in \mathcal{K}\times\mathcal{Q}$ so that the following equality holds:
\begin{equation}\label{existence:prop_relazione_mfg:equazione_payoffs}
        \mathfrak{p}(\Gamma,\Sigma)=\J(\Lambda,\mu)-\J(\beta,\mu)=F[(\Lambda,\mu),\beta].
\end{equation}

\begin{proposition}\label{existence:prop_relazione_mfg}
Let $((\Omega^0,\F^{0-}$, $\prob^0),\Lambda,\mu)$ be a correlated flow.
Denote by $\rho$ the law of $\mu$.
Let $\lambda=(\lambda_t)_{t \in [0,T]}$ be the strategy associated to the admissible recommendation $\Lambda$ and let $\beta \in \A$.
\begin{enumerate}[label=(\roman*)]
    \item     Let $X$ be the solution to \eqref{dinamica_MF_no_deviazione}.
    Suppose that consistency condition \eqref{def_mean_field_sol:cons} is satisfied.
    For every $t \in [0,T]$, set $\mathfrak{r}_t(da)dt=\delta_{\lambda_t}(da)dt$.
    Then, the probability measure $\Gamma=\prob\circ(X,\mathfrak{r},\mu)^{-1}$ belongs to the set $\mathcal{K}$.

    \item \label{existence:prop_relazione_mfg:deviazioni}
    For every $t \in [0,T]$, set $\mathfrak{b}_t(da)dt=\delta_{\beta_t}(da)dt$.
    Denote by $Y$ the solution to \eqref{dinamica_MF_deviation}.
    Then, there exists $\Sigma \in \mathcal{Q}$ so that
    \begin{equation}\label{existence:prop_relazione_mfg:decomposizione}
        \prob((Y,\mathfrak{b},\mu)\in B \times S)=\int_S \Sigma(B,m)\rho(dm), \quad \forall B \in \boreliani{\contrd \times \mathcal{V}}, \; S \in \boreliani{\contpdue}.
    \end{equation}
    
    \item \label{existence:prop_relazione_mfg:relazione_payoffs}
    The pair of strategies $(\Gamma,\Sigma)$ satisfies equation \eqref{existence:prop_relazione_mfg:equazione_payoffs}.
\end{enumerate}
\end{proposition}
\begin{proof}
In the following, we work on the probability space $(\Omega,\F,\mathbb{F},\prob)$ defined in \eqref{mf:condizione_ammissibilita}.
Recall that, as pointed out in Remark \ref{finite_players:remark_estensioni}, we can think of $W$, $\xi$ and $\mu$ as independent random variables, each of them defined on the same probability space $(\Omega,\F,\mathbb{F},\prob)$.
Observe that the $\mathcal{P}(A)$-valued process $\mathfrak{r}=(\delta_{\lambda_t})_{t \in [0,T]}$ is $\mathbb{F}$-progressively measurable since $\Lambda$ is admissible by assumption.
Let $X$ be the solution to equation \eqref{dinamica_MF_no_deviazione}.
Since $X$ obviously satisfies \eqref{existence:eq_processo_K} for such a process $\mathfrak{r}$ and the condition $\mu_t(\cdot)=\prob(X_t \in \cdot \; \vert \; \mu)$ holds by assumption, $\Gamma=\prob\circ(X,\mu,\mathfrak{r})^{-1}$ belongs to $\mathcal{K}$.

As for point \ref{existence:prop_relazione_mfg:deviazioni}, recall from Remark \ref{finite_players:remark_estensioni} that we can regard $\beta$ as defined on the product space $(\Omega,\F,\prob)$, and that $\beta$ and $\mu$ are mutually independent by construction.
Therefore, the $\mathcal{P}(A)$-valued process $\mathfrak{b}=(\delta_{\beta_t}(da))_{t \in [0,T]}$ is independent of $\mu$.
Let $Y$ be the solution of equation \eqref{dinamica_MF_deviation}.
By Lemma \ref{existence:lemma_decomposizione} in Appendix \ref{appendix_existence}, equation \eqref{existence:prop_relazione_mfg:decomposizione} holds.

Finally, since $X$ and $Y$ are defined on the same filtered probability space $(\Omega,\F,\mathbb{F},\prob)$, we can write the integrals in $\mathfrak{p}$ as expectations:
\begin{equation*}
\begin{aligned}
    & \int \mathfrak{F} (y,q,m) \Gamma(dy,dq,dm) = \E\quadre{\int_0^T f(t,X_t,\mu_t,\lambda_t)dt + g(X_T,\mu_T)} = \J(\Lambda,\mu), \\
    & \int \mathfrak{F} (y,q,m) \Sigma(dy,dq,m)\rho(dm) = \E\quadre{\int_0^T  f(t,Y_t,\mu_t,\beta_t)dt + g(Y_T,\mu_T)}=\J(\beta,\mu).
\end{aligned}
\end{equation*}
This proves \eqref{existence:prop_relazione_mfg:equazione_payoffs}.
\end{proof}

\subsection{Proof of Theorem \ref{existence:main_theorem}}\label{sec_existence:sec_proof}
Let $\Gamma^* \in \mathcal{K}$ be an optimal strategy for player A, which exists by Theorem \ref{existence:thm_esistenza_strategia_ottima}.
By applying a mimicking argument, stated and proven in Lemma \ref{existence:lemma_mimicking} in Appendix \ref{appendix_auxiliary:technical}, there exists 
a strategy $\hat{\Gamma}^*\in \mathcal{K}$ so that the following holds:
\begin{itemize}
    \item The marginal distributions of $\Gamma^*$ and $\hat{\Gamma}^*$ on $\contpdue$ are the same: $\Gamma^*(\contrd \times \mathcal{V} \times \cdot)=\hat{\Gamma}^*(\contrd \times \mathcal{V} \times \cdot)$.
    
    \item Let $(X,\mathfrak{r},\mu)$ be such that $\hat{\Gamma}^*=\prob\circ(X,\mathfrak{r},\mu)^{-1}$.
    Then $\mathfrak{r}$ is of the form $\mathfrak{r}_t=\hat{q}_t(X_t,\mu)$, where $\hat{q}:[0,T]\times\R^d\times\contpdue \to \mathcal{P}(A)$ is a measurable function.
    \item For every $\Sigma \in \mathcal{Q}$, it holds
    \begin{equation*}
        \mathfrak{p}(\Gamma^*,\Sigma)=\mathfrak{p}(\hat{\Gamma}^*,\Sigma).
    \end{equation*}
\end{itemize}
In particular, it holds
\begin{equation}\label{existence:main_thm:condizione_min_max}
    \inf_{\Sigma \in \mathcal{Q}}\mathfrak{p}(\hat{\Gamma}^*,\Sigma)=\inf_{\Sigma \in \mathcal{Q}}\mathfrak{p}(\Gamma^*,\Sigma)=\max_{\Gamma \in \mathcal{K}}\inf_{\Sigma \in \mathcal{Q}}\mathfrak{p}(\Gamma,\Sigma) \geq 0.
\end{equation}
Let $\mathfrak{U}=((\hat{\Omega},\hat{\F},\hat{\mathbb{F}},\hat{\prob}),\hat{\xi},\hat{W},\hat{\mu},\hat{\mathfrak{r}})$ 
be as in Definition \ref{existence:strategie_max}, so that $\hat{\Gamma}^*=\prob\circ(\hat{X},\hat{\mathfrak{r}},\hat{\mu})^{-1}$.
Recall that, by Lemma \ref{existence:lemma_mimicking}, $\hat{\mathfrak{r}}_t(da)dt=\hat{q}_t(\hat{X}_t,\hat{\mu})(da)dt$ $Leb_{[0,T]} \otimes \hat{\prob}$-a.s.
By Assumption \ref{convexity_assumptions}, the set $K(t,x,m_t)$ defined by \eqref{convexity_assumptions:insieme_K} is convex for every $(t,x,m) \in [0,T] \times \R^d \times \contpdue$.
Therefore, by a well known measurable selection argument (see, e.g., \cite[Lemma A.9]{haussmann1990sicom}) there exists a measurable function $\hat{\alpha}:[0,T]\times\R^d\times \contpdue \to A$ so that
\begin{equation}\label{existence:main_theorem:strict_ctrl}
\begin{aligned}
    \int_A b(t,x,m_t,a)\hat{q}_t(x,m)(da)=b(t,x,m_t,\hat{\alpha}(t,x,m)), \\
    f(t,x,m_t,\hat{\alpha}(t,x,m)) \leq \int_A f(t,x,m_t,a)\hat{q}_t(x,m)(da).
\end{aligned}
\end{equation}
It follows that $\hat{X}$ is a solution to equation
\begin{equation}\label{existence:main_theorem:opt_ctrled_eq}
    d\hat{X}_t=b(t,\hat{X}_t,\hat{\mu}_t,\hat{\alpha}(t,\hat{X}_t,\hat{\mu}))dt + d\hat{W}_t, \quad \hat{X}_0=\hat{\xi}
\end{equation}
as well, and the consistency condition \eqref{def_mean_field_sol:cons} is still satisfied.
By Lemma \ref{existence:mimicking:lemma_strong_existence}, we deduce that the solution $\hat{X}$ to equation \eqref{existence:main_theorem:opt_ctrled_eq} can be taken adapted to the $\hat{\prob}$-augmentation of the filtration $(\sigma(\hat{\mu}) \vee \sigma(\hat{\xi}) \vee \sigma(\hat{W}_s: \; s \leq t))_{t \in [0,T]}$, and therefore there exists a progressively measurable function $\Phi:\contpdue\times\R^d\times\contrd\to\contrd$ so that
\begin{equation}
    \hat{X}= \Phi(\hat{\mu},\hat{\xi},\hat{W}) \quad \hat{\prob}\text{-a.s.}
\end{equation}
Set
\begin{equation}\label{existence:main_theorem:ctrl_indotto}
\begin{aligned}
    & \begin{aligned}
        \hat{\lambda}:[0,T]\times\contpdue\times \R^d \times \contrd & \to A \\
    (t,m,x,w)&\mapsto \hat{\lambda}_t(m,x,w)=\hat{\alpha}_t(\Phi_t(m,x,w),m_t);
    \end{aligned} \\
    & \lambda =(\lambda_t)_{t \in [0,T]}=(\hat{\lambda}_t(\hat{\mu},\hat{\xi},\hat{W}))_{t \in [0,T]}.
\end{aligned}
\end{equation}
Then, the progressively measurable processes $(\hat{\alpha}_t(\hat{X}_t,\hat{\mu}))_{t \in [0,T]}$ and $(\hat{\lambda}_t(\hat{\mu},\hat{\xi},\hat{W}))_{t \in  [0,T]}$ are equal $Leb_{[0,T]}\otimes\hat{\prob}$-a.s., which implies that $\hat{X}$ solves 
\begin{equation*}
    d\hat{X}_t= b(t,\hat{X}_t,\hat{\mu}_t,\hat{\lambda}_t(\hat{\mu},\hat{\xi},\hat{W}))dt + d\hat{W}_t, \quad \hat{X}_0=\hat{\xi}
\end{equation*}
as well, and the consistency condition is still satisfied.
Set $(\Omega^0,\F^{0-},\prob^0)=(\contpdue,\boreliani{\contpdue}, \linebreak \rho)$.
By Lemma \ref{esempi:lemma_misurabile}, there exists a $\prob^0$-null set $N \subset \Omega^0$ so that the pair $(\Lambda^*,\mu*)$ defined by
\begin{equation}
\begin{aligned}
    \Lambda^*: \tonde{\contpdue,\boreliani{\contpdue},\rho} & \to (\A,\boreliani{\A}) \\
    m & \mapsto \Lambda^*(m)= \left\{ \begin{aligned}
        & (\hat{\lambda}_t(m,\cdot,\cdot))_{t \in [0,T]}, && m \in \Omega^0 \setminus N, \\
        & a_0  && m \in N,
        \end{aligned} \right. \\
    \mu^*=\text{Id}:\tonde{\contpdue,\boreliani{\contpdue},\rho} & \to \tonde{\contpdue,\boreliani{\contpdue},\rho}
\end{aligned}
\end{equation}
is a correlated flow, where $a_0$ is an arbitrary point in $A$.
Let $X^*$ be the solution of \eqref{dinamica_MF_deviation} on the product probability space $(\Omega,\F,\mathbb{F},\prob)$ defined in \eqref{mf:condizione_ammissibilita}.
Note that the strategy associated to the admissible recommendation $\Lambda^*$ strategy $\lambda^*$ is equal to $\hat{\lambda}_t(\mu^*,\xi,W)$ $Leb_{[0,T]}\otimes\prob$-almost surely.
Since uniqueness in law holds by Theorem \ref{teorema_di_unicita_legge}, it follows that
\begin{equation}\label{existence:main_thm:uguaglianza_leggi}
    \prob\circ(X^*,(\delta_{\lambda^*_t}(da))_{t \in [0,T]},\mu^*)^{-1}= \hat{\prob}\circ(\hat{X},(\delta_{\hat{\alpha}(t,\hat{X}_t,\hat{\mu})}(da))_{t \in [0,T]},\hat{\mu})^{-1},
\end{equation}
which implies that the consistency condition \eqref{def_mean_field_sol:cons} is satisfied.

\smallskip
Finally, we verify that the correlated flow just defined satisfies the optimality condition \eqref{def_mean_field_sol:opt}.
For any $\beta \in \A$, let $\Sigma \in \mathcal{Q}$ be as in point \ref{existence:prop_relazione_mfg:deviazioni} of Proposition \ref{existence:prop_relazione_mfg}.
Then, by \eqref{existence:main_thm:uguaglianza_leggi}, \eqref{existence:main_theorem:strict_ctrl} and \eqref{existence:main_thm:condizione_min_max}, for every $\Sigma \in \mathcal{Q}$ it holds
\begin{equation*}
\begin{aligned}
    \J & (\Lambda^*,\mu^*) = \E^{\hat{\prob}}\quadre{\int_0^T f(t,\hat{X}_t,\hat{\mu}_t,\hat{\alpha}(t,\hat{X}_t,\hat{\mu})) dt + g(\hat{X}_T,\hat{\mu}_T)} \\
    & \leq \E^{\hat{\prob}}\quadre{\int_0^T \int_A f(t,\hat{X}_t,\hat{\mu}_t,a)\hat{q}_t(\hat{X}_t,\hat{\mu}) dt + g(\hat{X}_T,\hat{\mu}_T)} = \int \mathfrak{F} (y,q,m) \hat{\Gamma}^*(dy,dq,dm) \\
    & \leq \int \mathfrak{F} (y,q,m) \Sigma(dy,dq,m)\rho(dm) = \J(\beta,\mu^*),
\end{aligned}
\end{equation*}
which proves that $(\Lambda^*,\mu^*)$ satisfies the optimality condition and therefore is a mean field CCE.

\begin{remark}\label{existence:rmk_linear_programming}
As described in this section, the first step to find a coarse correlated solution is to find an optimal strategy for the maximizing player in the auxiliary zero-sum game in Definition \ref{existence:def_zerosum}, which exists by Theorem \ref{existence:thm_esistenza_strategia_ottima}. Since this optimization problem is given by a linear payoff functional defined over a convex set of probability measures, linear programming methods could be employed to find approximate solutions.
The study of computational methods for finding coarse correlated solutions to the \mbox{MFG} is beyond the scope of this work. We refer to \cite{jiangleyton2015,paparough2008} for a linear programming approach to the computation of (true) correlated equilibria in $N$-player games.
\end{remark}

\section{An example of coarse correlated solution to the mean field game}\label{sezione_esempio}
Taking inspiration from the work of Bardi and Fischer \cite{bardi_fischer2019} and Lacker's papers \cite{lacker2016,lacker2020convergence}, we provide a simple example of a mean field game possessing mean field CCEs with non-deterministic flow of measures $\mu$.
Consistently with \cite{bardi_fischer2019,lacker2016,lacker2020convergence}
but differently from the rest of the paper, we consider a payoff, to be maximized, instead of a cost.

The MFG is as follows: We consider $d=1$, $A=[a,b]$, with $a < 0 < b$, and $\nu=\delta_0$.
For $m \in \probmeasures{\R}$, denote by $\bar{m}$ its mean $\int_{\R} y m(dy)$.
Consider the following coefficients and profit functions:
\begin{equation*}
    \begin{aligned}
    & b(t,x,m,a)=a, && f(t,x,m,a)=0, &&& g(x,m)=cx\bar{m},
    \end{aligned}
\end{equation*}
with $c>0$ a positive constant.
Observe that they satisfy the requirements of Assumptions \ref{standing_assumptions}.
We want to find a coarse correlated solution for the mean field game whose payoff functional, to be maximized, is given by
\begin{equation}\label{esempio_payoff}
    \J(\Lambda,\mu)=\E[cX_T \bar{\mu}_T],
\end{equation}
under the constraint
\begin{equation}\label{esempio_dinamiche}
    X_t = \int_0^t \lambda_s ds + W_t, \quad 0\leq t \leq T,
\end{equation}
where $\lambda$ is the strategy associated to an admissible recommendation $\Lambda$ in the sense of \eqref{mf:controllo_indotto}.

The rest of the section is organised as follows: In Section \ref{sec:example:computations}, we show that there exist infinitely many coarse correlated solutions to the MFG with non-deterministic flow of measures. Such solutions are neither classical MFG solutions nor a randomization (or a mixture, in the language of \cite{lacker2016,lacker2020convergence}) of the classical MFG solutions.
In Section \ref{sec:comparions_weak_mfg}, we compare coarse correlated solutions to the MFG with Lacker's weak solutions to MFG without common noise, as they may feature a random flow of measures as well.
By the study of this specific MFG, we show that there exist infinitely many mean field CCEs which are not weak solutions to the MFG without common noise.
Moreover, we show that weak MFG solutions which satisfy an additional measurability constraint are indeed coarse correlated solution to the MFG as well.

\subsection{Exhibiting explicit coarse correlated solutions}\label{sec:example:computations}
Set $\Omega^0=\{1,2\}^2$, $\F^{0-}=2^{\Omega^0}$ the power set and, given some probability measure $\prob^0 \in \mathcal{P}(\Omega^0,\F^{0-})$, we set $\prob^0((i,j))=p_{i,j}$, so that $p_{i,j}\geq 0$ for all $i,j$ and $\sum_{i,j=1}^2 p_{i,j}=1$.
Consider the following open loop strategies and flows of measures:
\begin{equation}
    \begin{aligned}
        u^+_t(\omega_*)\equiv b, \qquad \mu^+ = (\prob^* \circ (tb + W_t)^{-1})_{t \in [0,T]}; \\
        u^-_t(\omega_*)\equiv a, \qquad \mu^- = (\prob^* \circ (ta + W_t)^{-1})_{t \in [0,T]}.
    \end{aligned}
\end{equation}
It was shown in \cite{bardi_fischer2019} that the pairs $(u^+,\mu^+)$ and $(u^-,\mu^-)$ are two non-equivalent open-loop solutions of the mean field game, with initial distribution $\nu=\delta_{0}$, where by ``non-equivalent'' we mean that the flows of measures $\mu^+$ and $\mu^-$ do not coincide.
We point out that this result holds for more general initial distributions $\nu\in\mathcal{P}(\R)$, see \cite[Definition 3.1 and Theorem 3.1]{bardi_fischer2019}.
Choose $a_1,a_2 \in [0,1]$, and set:
\begin{equation}\label{esempio:def_strategie_flussi}
    \begin{aligned}
        \mu^1=\tonde{a_1 \mu^+_t + (1-a_1)\mu^-_t}_{t \in [0,T]}, \\
        \mu^2=\tonde{a_2 \mu^+_t + (1-a_2)\mu^-_t}_{t \in [0,T]}.
    \end{aligned}
\end{equation}
Define $(\Lambda,\mu)$ in the following way:
\begin{equation}\label{esempio_corr_sol}
    (\Lambda,\mu)\tonde{(i,j)}=\begin{cases}
    (u^+,\mu^1) \quad (i,j)=\tonde{1,1}, \\
    (u^+,\mu^2) \quad (i,j)=\tonde{1,2}, \\
    (u^-,\mu^1) \quad (i,j)=\tonde{2,1}, \\
    (u^-,\mu^2) \quad (i,j)=\tonde{2,2}.
    \end{cases}
\end{equation}
We claim that, as long as $a<0<b$, for every $T,c>0$ there exists a probability measure $(p_{i,j})_{i,j=1,2}$ and a suitable choice of the parameters $(a_i)_{i=1,2}$ so that the tuple $((\Omega^0,\F^{0-},\prob^0),\Lambda,\mu)$ is a coarse correlated solution of the mean field game according to Definition \ref{def_mean_field_sol}.

\smallskip
First of all, as shown in Example \ref{mf:example:admissible_recommendations} in Section \ref{sezione_formulazione_mfg}, since $\Lambda$ takes only two values, it is admissible.
Therefore, the tuple $((\Omega^0,\F^{0-},\prob^0),\Lambda,\mu)$ is a correlated flow.

\smallskip
Let us begin with the consistency condition.
We first observe that, when the state equation is controlled by $u^+$ (respectively, $u^-$), the law of the state process at time $t$, $X_t$, is exactly $\mu^+_t$ (respectively, $\mu^-_t$), for every time $t \in [0,T]$.

Suppose that $p_{1,1}+p_{2,1}$ and $p_{1,2}+p_{2,2}$ are both strictly positive.
Then, observe that
\begin{equation*}
    \prob(X_t \in \cdot \; \vert \; \mu)(\omega_0)=\prob(X_t \in \cdot \; \vert \; \mu)(\omega_0)=\begin{cases}
    \prob(X_t \in \cdot \; \vert \;  \mu=\mu^1) \quad   \text{if } \mu(\omega_0)=\mu^1, \\
    \prob(X_t \in \cdot \; \vert \;  \mu=\mu^2) \quad  \text{if } \mu(\omega_0)=\mu^2.
    \end{cases}
\end{equation*}
We can compute explicitly such a conditional probability. Fix $A\in \boreliani{\R^d}$:
\begin{equation*}
    \begin{cases}
    \prob(X_t \in A \; \vert \; \mu=\mu^1) \\
    \prob(X_t \in A \; \vert \; \mu=\mu^2)
    \end{cases}  = \quad \begin{cases}
    \frac{p_{1,1}}{p_{1,1}+p_{2,1}}\prob(X^+_t \in A) +  \frac{p_{2,1}}{p_{1,1}+p_{2,1}}\prob(X^-_t \in A) \quad \text{if } \mu(\omega_0)=\mu^1, \\
    \frac{p_{1,2}}{p_{1,2}+p_{2,2}}\prob(X^+_t \in A) +  \frac{p_{2,2}}{p_{1,2}+p_{2,2}}\prob(X^-_t \in A) \quad  \text{if } \mu(\omega_0)=\mu^2.
    \end{cases}
\end{equation*}
In order to satisfy the consistency condition, it must hold
\begin{equation*}
    \begin{cases}
    \frac{p_{1,1}}{p_{1,1}+p_{2,1}}\prob(X^+_t \in A) +  \frac{p_{2,1}}{p_{1,1}+p_{2,1}}\prob(X^-_t \in A)=\mu^1_t(A) \\
    \frac{p_{1,2}}{p_{1,2}+p_{2,2}}\prob(X^+_t \in A) +  \frac{p_{2,2}}{p_{1,2}+p_{2,2}}\prob(X^-_t \in A) = \mu^2_t(A)
    \end{cases}
\end{equation*}
for every $A\in \boreliani{\R^d}$.
By definition of $\mu^1$ and $\mu^2$,
\begin{equation*}
    \begin{cases}
    \frac{p_{1,1}}{p_{1,1}+p_{2,1}}\mu^+_t(A) +  \frac{p_{2,1}}{p_{1,1}+p_{2,1}}\mu^-_t(A)=a_1\mu^+_t(A) + (1-a_1)\mu^-_t(A), \\
    \frac{p_{1,2}}{p_{1,2}+p_{2,2}}\mu^+_t(A) +  \frac{p_{2,2}}{p_{1,2}+p_{2,2}}\mu^-_t(A) = a_2 \mu^+_t(A) + (1-a_2)\mu^-_t(A)
    \end{cases}
\end{equation*}
which holds if and only if
\begin{equation}\label{esempio_condizione_flussi_misure}
    \begin{cases}
    \frac{p_{1,1}}{p_{1,1}+p_{2,1}} = a_1, \\
    \frac{p_{1,2}}{p_{1,2}+p_{2,2}} = a_2.
    \end{cases}
\end{equation}
We can regard \eqref{esempio_condizione_flussi_misure} as the consistency condition.

\smallskip
We now turn our attention to the optimality condition.
Consider $\gamma = \prob \circ (\Lambda,\mu)^{-1} = \prob^0 \circ (\Lambda,\mu)^{-1} \in \probmeasures{\A \times \contpdue}$ and $\rho=\prob\circ\mu^{-1}=\prob^0\circ\mu^{-1} \in \probmeasures{\contpdue}$.
Since $(\Lambda,\mu)$ takes finitely many values and it is independent of $W$ under the product measure $\prob = \prob^0 \otimes \prob^*$, we can  rewrite the optimality condition using disintegration of measures as
\begin{equation*}
    \int_{\A\times\pwassspace{2}{\contrd}}  \J(\alpha,m) \gamma(d\alpha,dm) \geq \int_{\pwassspace{2}{\contrd}} \J(\beta,m) \rho(dm) \quad \forall \, \beta \in \A,
\end{equation*}
where
\[
\J(\theta,m) = \E[c X_T \bar{m}_T], \quad     X_t =  \int_0^t \theta_s ds + W_t, \quad 0\leq t \leq T,
\]
for $m$ in $\mu(\Omega^0):=\{ \mu^1,\mu^2 \}$ and $\theta=\alpha$ in $\Lambda(\Omega^0):=\{u^+,u^-\} \subseteq \A$ on the left-hand side of the inequality above and $\theta=\beta$ in $ \A$ on the right-hand side.
We rewrite explicitly the inequality as
\begin{equation}\label{esempio_funzionale2}
    \begin{aligned}
    \J(\Lambda,\mu) - \J(\beta,\mu) & =  p_{1,1}\tonde{\J(u^+,\mu^1) - \J(\beta,\mu^1)}  + p_{1,2}\tonde{\J(u^+,\mu^2) - \J(\beta,\mu^2)} \\ 
    & + p_{2,1}\tonde{\J(u^-,\mu^1) - \J(\beta,\mu^1)} + p_{2,2}\tonde{\J(u^-,\mu^2) - \J(\beta,\mu^2)} \geq 0.
    \end{aligned}
\end{equation}
Therefore, using \eqref{esempio_payoff}, we have
\begin{equation*}
    \begin{aligned}
    \J(\Lambda,\mu) - \J(\beta,\mu) & = p_{1,1}\tonde{ cT^2b(a_1b + (1-a_1)a)  - c M(\beta)T(a_1b + (1-a_1)a)}  \\ 
    & +   p_{1,2}\tonde{ cT^2 b(a_2b + (1-a_2)a)  - c M(\beta)T(a_2b + (1-a_2)a)} \\
    & + p_{2,1}\tonde{ cT^2 a(a_1b + (1-a_1)a)  - c M(\beta)T(a_1b + (1-a_1)a)} \\
    & +  p_{2,2}\tonde{ cT^2a(a_2b + (1-a_2)a)  - c M(\beta)T(a_2b + (1-a_2)a)},
    \end{aligned}
\end{equation*}
where $M(\beta):=\E[\int_0^T\beta_t dt]=\E[X^\beta_T]$.
We can set $m(\beta):=\sfrac{1}{T}M(\beta)=\sfrac{1}{T}\E[\int_0^T\beta_t dt]$. Observe that $m(\beta) \in [a,b]$, being the mean of an $A$-valued process, and $m(\A)=[a,b]$, since for every $c \in [a,b]$ the constant process $\beta\equiv c$ belongs to $\A$.
We divide by $cT^2$ to obtain the following condition:
\begin{equation}\label{esempio_funzionale3}
    \begin{aligned}
    p_{1,1} & \tonde{ b(a_1b + (1-a_1)a)  - m(\beta)(a_1b + (1-a_1)a)}  \\ 
    &+  p_{1,2}\tonde{ b(a_2b + (1-a_2)a)  - m(\beta)(a_2b + (1-a_2)a)} \\
    &+ p_{2,1}\tonde{a(a_1b + (1-a_1)a)  - m(\beta) (a_1b + (1-a_1)a)} \\
    &+  p_{2,2}\tonde{ a(a_2b + (1-a_2)a)  - m(\beta) (a_2b + (1-a_2)a)} \\
    & \geq 0.
    \end{aligned}
\end{equation}
The condition above can be seen as a positivity condition for a real affine function of $g(m)$, $m \in [a,b]$, i.e.
\begin{equation}\label{esempio_condizione_inf}
    \begin{aligned}
        & \inf_{m \in [a,b]} g(m)=\inf_{m \in [a,b]} h((p_{i,j})_{i,j=1,2} , (a_i)_{i=1,2};a,b) m + k((p_{i,j})_{i,j=1,2} , (a_i)_{i=1,2};a,b) \geq 0 \\
        & \begin{cases}
        h((p_{i,j})_{i,j=1,2} , (a_i)_{i=1,2};a,b)= & - \left\{p_{1,1}(a_1b+(1-a_1)a) + p_{1,2}(a_2 b+(1-a_2)a) \right.\\ 
        & \left. + p_{2,1}(a_1b+(1-a_1)a) + p_{2,2}(a_2b+(1-a_2)a)  \right\}, \\
        k((p_{i,j})_{i,j=1,2} , (a_i)_{i=1,2};a,b) =  & p_{1,1}b(a_1b+(1-a_1)a) + p_{1,2}b(a_2 b+(1-a_2)a)\\ 
        & + p_{2,1}a(a_1b+(1-a_1)a) + p_{2,2}a(a_2b+(1-a_2)a) . \\
        \end{cases}
    \end{aligned}
\end{equation}
We now impose the consistency condition \eqref{esempio_condizione_flussi_misure} to get:
\begin{equation}\label{esempio_coefficienti}
    \begin{aligned}
    h((p_{i,j})_{i,j=1,2} ;a,b)  & = - b \tonde{\frac{p_{1,1}^2 + p_{2,1}p_{1,1}}{p_{1,1} + p_{2,1}} + \frac{p_{1,2}^2 + p_{1,2}p_{2,2}}{p_{1,2} + p_{2,2}} }  \\
    &\quad  -a \tonde{\frac{p_{2,1}^2 + p_{2,1}p_{1,1}}{p_{1,1} + p_{2,1}} + \frac{p_{2,2}^2 + p_{1,2}p_{2,2}}{p_{1,2} + p_{2,2}} }, \\
    k((p_{i,j})_{i,j=1,2} ;a,b) & =  b^2 \tonde{\frac{p_{1,1}^2}{p_{1,1} + p_{2,1}} + \frac{p_{1,2}^2}{p_{1,2} + p_{2,2}} } +a^2 \tonde{\frac{p_{2,1}^2}{p_{1,1} + p_{2,1}} + \frac{p_{2,2}^2 }{p_{1,2} + p_{2,2}} } \\
    & \quad + 2ab\tonde{\frac{p_{1,1}p_{2,1}}{p_{1,1}+p_{2,1}} + \frac{p_{1,2}p_{2,2}}{p_{1,2}+p_{2,2}}}.
    \end{aligned}
\end{equation}
Observe that imposing the consistency condition \eqref{esempio_condizione_flussi_misure} reduces the number of parameters but makes the problem nonlinear in the probabilities $(p_{i,j})_{i,j=1,2}$.

\medskip
Looking at \eqref{esempio_condizione_inf} and \eqref{esempio_coefficienti}, we observe that it implies that every randomization of the open loop MFG solutions $(u^+,\mu^+)$ and $(u^-,\mu^-)$ is a coarse correlated solution to the MFG as well, and it covers the case treated in \cite{bardi_fischer2019}, for any choices of $a < 0 < b$.
To see this, consider a probability measures $\prob^0=(p_{i,j})_{i,j=1,2}$ so that $p_{1,2}=p_{2,1}=0$ and, therefore, $p_{2,2}=1-p$ and $p_{1,1} = p \in [0,1]$.
For any $p$, such probability measure is a randomization of the MFG solutions $(u^+,\mu^+)$ and $(u^-,\mu^-)$.
Equations \eqref{esempio_coefficienti} take the simpler form
\begin{equation}\label{esempio_coefficienti_diagonale}
\begin{aligned}
    h((p,0,0,1-p); a,b) & = - bp -a(1-p),  \\
    k((p,0,0,1-p); a,b) &= b^2 p +a^2(1-p).
\end{aligned}
\end{equation}
If $h((p,0,0,1-p);a,b) \geq 0$, then condition \eqref{esempio_condizione_inf} becomes
\begin{equation*}
    \inf_{ m \in [a,b]} -bm + b^2 = a^2(1-p) -ab(1-p) \geq 0,
\end{equation*}
which is satisfied if $a<0$ and $b>0$, for any $p \in [0,1]$.
If instead $h((p,0,0,1-p);a,b) < 0$, we have
\begin{equation*}
    \inf_{ m \in [a,b]} -bm + b^2 = b^2(1-p) - ab(1-p) \geq 0,
\end{equation*}
which is again satisfied if $a<0$ and $b>0$, for every $p \in [0,1]$.
Observe that, when $p=1$, $\mu\equiv\mu^1=\mu^+$, while, when $p=0$, $\mu\equiv\mu^2=\mu^-$.
This shows that the deterministic correlated flows $(\Lambda,\mu)\equiv (u^+,\mu^+)$ and $(\Lambda,\mu)\equiv (u^-,\mu^-)$ are indeed mean field CCE in the sense of Definition \ref{def_mean_field_sol}.

\smallskip
Turning to more interesting cases, consider $[a,b]=[-1,1]$.
The choice of a symmetric interval is not necessary, but it has been made to ease the comparison with previous results in the literature (see the next subsection).
Figure \ref{fig:esempio_immagine} shows the existence of coarse correlated mean field equilibria as the probability measure $(p_{i,j})_{i,j=1,2}$ varies.
Yellow spots in Figure \ref{fig:esempio_immagine} refer to those probability measures on $(\Omega^0,\F^{0-})$ so that $(\Lambda,\mu)$ is indeed a mean field CCE.
In particular, it shows the existence of infinitely many mean field CCEs for the system.
Observe that there exist infinitely many coarse correlated solutions of the mean field game so that $\Lambda$ is not a deterministic function of $\mu$, i.e., they are not a randomization of the solutions $(u^+,\mu^+)$ and $(u^-,\mu^-)$: they correspond to those probability measures $(p_{i,j})_{i=1}^2$ so that $p_{1,2} \cdot p_{2,1} > 0$.
Referring to Figure \ref{fig:esempio_immagine}, they correspond to yellow points in the interior of any of the three pseudo-simplexes.

\begin{figure}
\centering
\includegraphics[scale=0.2]{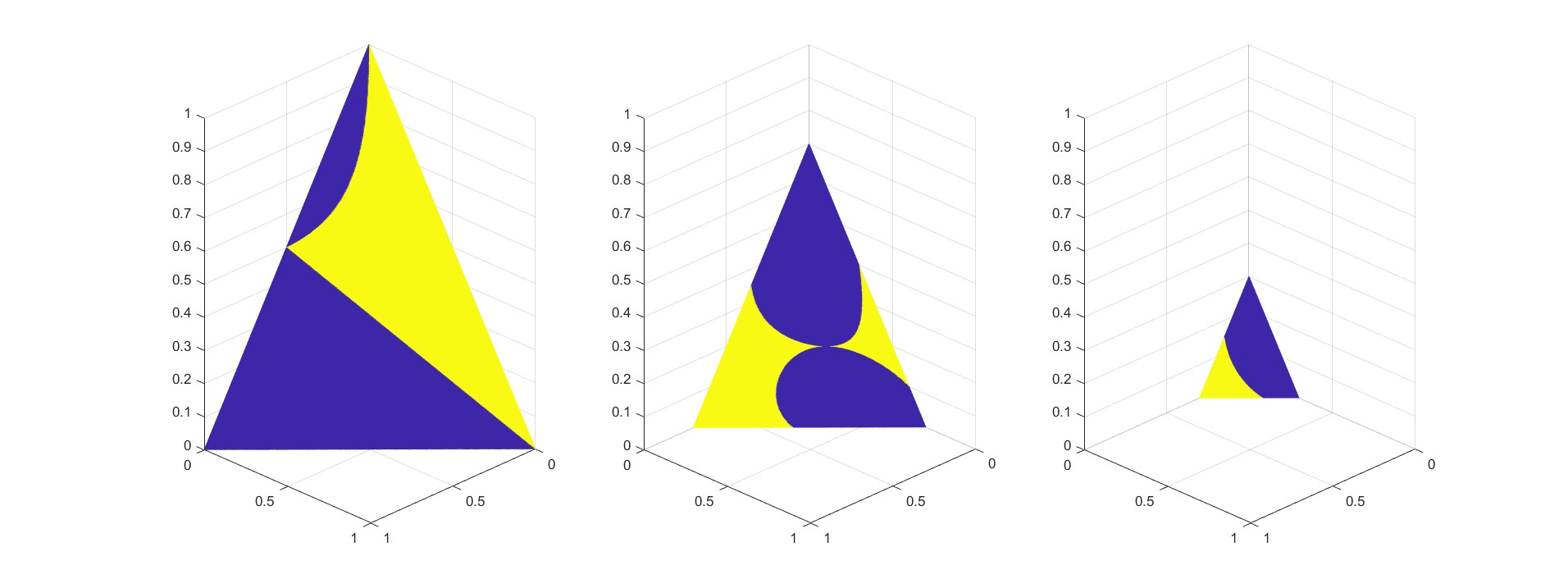}
    \caption{Existence of mean field CCEs as the probability measure $(p_{i,j})_{i,j=1,2}$ varies.
    Yellow points correspond to the values of $(p_{i,j})$ associated to coarse correlated solutions.
    The complement set of CCEs in each pseudo-simplex is indicated in blue.
    We fix $p_{2,1} \in [0,1)$ and let the other parameters vary, so that every point $(p_{1,1},p_{1,2},p_{2,2})$ in each pseudo 2-dimensional simplex is such that $p_{1,1} + p_{1,2} + p_{2,2} = 1-p_{2,1}$, $p_{i,j} \geq 0$.
    We consider $[a,b]=[-1,1]$ and $p_{2,1}$ equal to $0.0$, $0.3$, and $0.7$, respectively.
    } \label{fig:esempio_immagine}
\end{figure}

\subsection{Comparison with weak mean field game solutions without common noise of \cite{lacker2016}}\label{sec:comparions_weak_mfg}

Consider $A=[-1,1]$, $T=2$.
With this choice of control actions and time horizon, the example we proposed matches the setting of Lacker's \say{illuminating example} of \cite[Section 3.3]{lacker2016}.
We show that there exists a coarse correlated solution of the MFG which is not a weak MFG solution without common noise as defined in Definition 3.1 therein.
In particular, the most important feature is the fact that the recommendation $\Lambda$ can not be expressed as a deterministic function of the flow of measures.
On the other hand, we show that, under an additional assumption, weak MFG solutions induce coarse correlated solutions to the MFG.

\smallskip
To be consistent with the notation and the setup of Lacker's paper, we use the notion of relaxed controls, which are used extensively in Section \ref{sezione_existence} (see, in particular, Section \ref{existence:sezione_crtl_rilassati} for definitions, notation and some important properties).
Let $(p_{i,j})_{i,j=1,2}$ be so that $p_{1,1}+p_{2,1}$ and $p_{1,2}+p_{2,2}$ are strictly positive.
We introduce the relaxed controls $\delta^+=(\delta^+_t)_{t \in [0,T]}$ and $\delta^-=(\delta^-_t)_{t \in [0,T]}$, by setting
\begin{equation*}
\begin{aligned}
    & \delta^+_t(\omega_*;da)=\delta_{u^+_t(\omega_*)}(da) \equiv \delta_{1}(da), \quad \forall t \in [0,T], \omega_* \in \Omega^*,\\
    & \delta^-_t(\omega_*;da)=\delta_{u^-_t(\omega_*)}(da) \equiv \delta_{-1}(da), \quad \forall t \in [0,T], \omega_* \in \Omega^*.
\end{aligned}
\end{equation*}
Consider the correlated flow $(\Lambda,\mu)$ defined by \eqref{esempio_corr_sol}
and observe that the strategy $\lambda=(\lambda_t)_{t \in [0,T]}$ associated to the admissible recommendation $\Lambda$ can be rewritten as a relaxed control as
\begin{equation}\label{raccomandazione_come_lacker}
    \mathfrak{r}_t(\omega;da)= \mathfrak{r}_t(\omega_0,\omega_*;da)=\1_{\{\Lambda=u^+\}}(\omega_0)\delta^+_t(da) + \1_{\{\Lambda=u^-\}}(\omega_0)\delta^-_t(da).
\end{equation}
We point out that $\mathfrak{r}$ does not depend on $\omega_*$ since $\delta^+$ and $\delta^-$ do not depend on $\omega_*$.
Starting from $(\Lambda,\mu)$, we define a random variable $\Tilde{\mu}$ with values in $\mathcal{P}(\contrd \times \mathcal{V} \times \contrd)$ by setting
\begin{equation}\label{esempio_comparison_flusso}
    \Tilde{\mu}(\cdot)=\prob((W,\mathfrak{r},X) \in \cdot \; \vert \; \mu ).
\end{equation}
We observe that $\sigma(\mu)=\sigma(\Tilde{\mu})$: we have $\sigma(\Tilde{\mu}) \subseteq \sigma(\mu)$ since, by definition of regular conditional probability, $\Tilde{\mu}$ must be $\sigma(\mu)$ measurable; to get the opposite inclusion, for every $t \in [0,T]$, let $\Tilde{\mu}^x_t$ be the push forward of $\Tilde{\mu}$ through the map $\contrd \times \mathcal{V} \times \contrd \ni (w,q,x) \mapsto x_t \in \R^d$.
Then, by exploiting the consistency condition \eqref{def_mean_field_sol:cons}, we have 
\begin{equation*}
    \Tilde{\mu}^x_t(A)=\Tilde{\mu}(\{x \in \contrd: \; x_t \in A\})=\prob(X_t \in A \; \vert \; \mu ) = \mu_t(A),
\end{equation*}
for every $A \in \boreliani{\R^d}$, i.e. $\Tilde{\mu}^x_t=\mu_t$ $\prob$-a.s, for every $t \in [0,T]$.
Let $(\mathcal{B}_{t,\contrd})_{t \in [0,T]}$ be the natural filtration of the identity process on $\contrd$, i.e. $\mathcal{B}_{t,\contrd}=\sigma(\contrd \ni x \mapsto x_s \in \R^d: \; 0 \leq s \leq t)$, and let $(\F^{\Tilde{\mu}}_t)_{t \in [0,T]}$ be the natural filtration of $\Tilde{\mu}$, that is
\begin{equation*}
    \F^{\Tilde{\mu}}_t=\sigma(\Tilde{\mu}\tonde{C}: \; C \in \mathcal{B}_{t,\contrd}\otimes \F^{\mathcal{V}}_t \otimes \mathcal{B}_{t,\contrd}).
\end{equation*}
We observe that, for every $t \in (0,T]$, we have $\F^{\Tilde{\mu}}_t=\sigma(\mu)$.
To see this, observe that
\begin{equation*}
    \sigma(\mu) \supseteq \F^{\Tilde{\mu}}_t \supseteq \sigma(\Tilde{\mu}^x_s: \; s \leq t) = \sigma(\mu_s: \; s \leq t) = \sigma(\mu),
\end{equation*}
where the last equality holds for every $t>0$, as can be verified by explicit calculations.
Finally, for $t=0$, we have $\F^{\Tilde{\mu}}_0=\{\emptyset,\Omega^0\}$.
Having established the relations between such $\sigma$-algebras, it is straighforward to verify that the tuple $((\Omega,\F,\mathbb{F},\prob),W,\Tilde{\mu},\mathfrak{r},X)$ satisfies properties (1-4) and (6) of \cite[Definition 3.1]{lacker2016}.
Now, pick a probability measure $\prob^0$ so that $\min(p_{1,2},p_{2,1}) > 0$ and $\bar{\mu}^1_T>0$, $\bar{\mu}^2_T<0$.
Figure \ref{fig:esempio_immagine} shows that such a choice is possible (actually, there exist infinitely many measures $\prob^0$ with the desired property).
For such a choice of $\prob^0$, the relaxed control $\mathfrak{r}$ does not satisfy the optimality condition (5) of \cite[Definition 3.1]{lacker2016}, since, as shown in \cite[Section 3.3]{lacker2016}, every optimal control $\mathfrak{r}^*$ must be of the form $\mathfrak{r}^*_t(da)(\omega)=\delta_{\alpha^*_t(\omega)}(da)$ for $Leb_{[0,T]}$-a.e. $t$, with
\begin{equation*}
    \alpha^*_t = \text{sign}\tonde{\attesa{\Tilde{\mu}^x_T \; \vert \; \F^{\Tilde{\mu}}_t}}.
\end{equation*}
Here, $\text{sign}\tonde{0}=0$.
Since $\F^{\Tilde{\mu}}_0$ is trivial and $\F^{\Tilde{\mu}}_t=\sigma(\mu)$ for $t > 0$, the optimal control $\alpha^*_t$ must be equal to
\begin{equation}\label{esempio_comparison_ctrl_ottimo}
    \alpha^*_t(\omega)=\alpha^*_t(\omega_0)=\begin{cases}
    -1 \quad \text{ if } \bar{\mu}_T(\omega_0)<0, \\
    1 \qquad \text{ if } \bar{\mu}_T(\omega_0)>0,
    \end{cases} \quad 0 < t \leq T,
\end{equation}
and equal to an arbitrary value at $t=0$.
In particular, observe that such a control is a deterministic function of the measure $\Tilde{\mu}$.
For every $\prob^0$ so that $p_{1,2}+p_{2,1}>0$, this is not the case of the correlated flow $(\Lambda,\mu)$ defined in \eqref{esempio_corr_sol}, since $\Lambda$ is not a deterministic function of $\mu$.

\smallskip
The essential reason for the lack of optimality, in the sense of Lacker, of the relaxed control $\mathfrak{r}$ defined by \eqref{raccomandazione_come_lacker} resides in the differences between allowed deviations: on the one hand, for weak mean field games solutions in the sense of \cite{lacker2016}, all adapted compatible controls $\mathfrak{b}=(\mathfrak{b}_t)_{t \in [0,T]}$ are allowed, where ``compatible'' means that $\sigma(\mathfrak{b}_s: s \leq t)$ is conditionally independent of $\F^{\xi,\Tilde{\mu},W}_T$ given $\F^{\xi,\Tilde{\mu},W}_t$ for every $t$, which leads to a very rich class of controls.
On the other hand, for coarse correlated solution of the MFG, only $\mathbb{F}^*$-progressively measurable strategies are allowed as deviations.
Therefore, many more solutions exist.

\smallskip
Lastly, we show that under an additional assumption on the measurability of the random measure, it is indeed possible to define a mean field CCE starting from weak MFG solution without common noise.
This property is not a-priori granted, due to the difference between the respective consistency conditions.
Let $\Tilde{\mu}$ be a weak MFG solution without common noise.
Let $\mu_t$ be the push forward of $\Tilde{\mu}$ through the map $\contrd \times \mathcal{V} \times \contrd \ni (w,q,x) \mapsto x_t \in \R^d$.
Define a random flow of measures by setting $\mu=(\mu_t)_{t \in [0,T]}$.
Assume that the flow of measures $\mu$ carries the same information as the random measure $\Tilde{\mu}$, i.e.
\begin{equation}\label{esempio:hp_filtrazioni}
    \sigma(\mu_s: \; 0 \leq s \leq t)=\F^{\Tilde{\mu}}_t, \quad \forall t \in [0,T].
\end{equation}
If a weak MFG solution $\Tilde{\mu}$ satisfies condition \eqref{esempio:hp_filtrazioni}, then $\Tilde{\mu}$ does induce a mean field CCE.
Indeed, set $\rho=\prob\circ\mu^{-1}$.
By \eqref{esempio:hp_filtrazioni}, we have $\mu_t=\prob(X_t \in \cdot \; \; \vert \; \Tilde{\mu}) = \prob(X_t \in \cdot \; \; \vert \; \mu)$, i.e. the consistency condition \eqref{def_mean_field_sol:cons} is satisfied.
Moreover, the assumption on equality of the filtrations ensures that there exists a progressively measurable function $\varphi:[0,T]\times\contpdue \to A$ so that
\begin{equation*}
    \alpha^*_t=\text{sign}\tonde{\E\quadre{\bar \mu_T \; \vert \; \F^{\Tilde{\mu}}_t}}=\varphi\tonde{t,\mu}.
\end{equation*}
Then, we define $(\Omega^0,\F^{0-},\prob^0)$ and $(\Lambda^*,\mu^*)$ as
\begin{equation}
    \begin{aligned}
        & (\Omega^0,\F^{0-},\prob^0)=\tonde{\contpdue,\boreliani{\contpdue},\rho}, \\
        & \mu^*=\text{Id}:\tonde{\contpdue,\boreliani{\contpdue},\rho} \to \tonde{\contpdue,\boreliani{\contpdue},\rho}, \\
        & \begin{aligned}
        \Lambda^*: \tonde{\contpdue,\boreliani{\contpdue},\rho} & \to (\A,\boreliani{\A}) \\
        m & \mapsto \Lambda^*(m)=(\varphi(t,m))_{t \in [0,T]}.
        \end{aligned}
    \end{aligned}
\end{equation}
By Lemma \ref{esempi:lemma_misurabile}, the tuple $((\Omega^0,\F^{0-},\prob^0),\Lambda^*,\mu^*)$ is a correlated flow.
Let $X^*$ be the solution of \eqref{esempio_dinamiche} on the product probability space $(\Omega,\F,\mathbb{F},\prob)$ defined in \eqref{mf:condizione_ammissibilita}.
Since uniqueness in law holds by Theorem \ref{teorema_di_unicita_legge}, it follows that $(X^*,\mu^*)$ has the same joint law as $(X,\mu)$, which implies that the consistency condition \eqref{def_mean_field_sol:cons} is satisfied.
Since $\lambda^*_t=\varphi(t,\mu^*)$, $(\Lambda^*,\mu^*)$ satisfies optimality condition \eqref{def_mean_field_sol:opt} as well and therefore it is a mean field CCE.

We observe that the additional assumption on the filtrations \eqref{esempio:hp_filtrazioni} is satisfied both by the weak MFG solution exhibited in \cite[Proposition 3.7]{lacker2016} and in our case, as shown above.
We point out that this CCE has been already considered: suppose that the flow of measures as law $\rho=p\delta_{\mu^+} + (1-p)\delta_{\mu^-}$, $p \in (0,1)$, for $\mu^+$ and $\mu^-$ given by \eqref{esempio:def_strategie_flussi}.
Then, the correlated flow $(\Lambda^*,\mu^*)$ corresponds to the probability measures $\prob^0$ so that $p_{1,2}=p_{2,1}=0$, $p_{1,1}=p$ and $p_{2,2}=1-p$, which, as shown, always satisfy the condition \eqref{esempio_condizione_inf}.
Roughly speaking, it corresponds to the case when $\Lambda^*=\phi(\mu^*)$ $\prob^0$-a.s., for some deterministic measurable $\phi$.

\appendix
\addcontentsline{toc}{section}{Appendices}

\section{Relaxed controls}\label{existence:sezione_crtl_rilassati}
Here, we recall some facts on relaxed controls.
Denote by $\mathcal{V}$ the set of positive measures $q$ on $[0,T] \times A$ so that the time marginal is equal to the Lebesgue measure, i.e., $q ([s,t]\times A)=t-s$ for every $0 \leq s \leq t \leq T$.
We endow $\mathcal{V}$ with the topology of weak convergence of measures, which makes $\mathcal{V}$ a Polish space.
It is a well known fact that, when the set $A$ is compact, $\mathcal{V}$ is compact as well.
Moreover, for every measure $q \in \mathcal{V}$, there exists a measurable map $[0,T] \ni t \mapsto q_t \in \mathcal{P}(A)$ so that $q(da,dt)=q_t(da)dt$, with $(q_t)_{t \in [0,T]}$ unique up to $Leb_{[0,T]}$-a.e. equality.
We can equip the measurable space $(\mathcal{V},\boreliani{\mathcal{V}})$ with the filtration $(\F^{\mathcal{V}}_t)_{t \in [0,T]}$ defined by
\begin{equation*}
    \F^{\mathcal{V}}_t = \sigma( \mathcal{V} \ni q \mapsto q(C): \; C \in \boreliani{[0,t]\times A}).
\end{equation*}
We observe that $\F^{\mathcal{V}}_t$ is countably generated for every $t \in [0,T]$, by reasoning as in the proof of \cite[Proposition 7.25]{bertsekas_shreve}.
Finally, one can prove that there exists an $\mathcal{F}_t^{\mathcal{V}}$-predictable process $\overline{q}:[0,T]\times\mathcal{V}\to\mathcal{P}(A)$ such that, for each $q \in \mathcal{V}$, $\overline{q}(t,q)=q_t$ for a.e. $t \in [0,T]$ (see, e.g., \cite[Lemma 3.2]{lacker2015martingale}).
By an abuse of notation, we write $q_t(da)=\overline{q}(t,q)(da)$.

\smallskip
Consider a filtered probability space $(\Omega,\F,\mathbb{F},\prob)$.
A relaxed control $\mathfrak{r}$ is a $\mathcal{V}$-valued random variable.
We say that $\mathfrak{r}$ is $\mathbb{F}$-adapted if $\mathfrak{r}(C)$ is a real valued $\F_t$-measurable random variable for every $C \in \boreliani{[0,t]\times A}$.
Observe that every $A$-valued progressively measurable process $\alpha=(\alpha_t)_{t \in [0,T]}$, which is often referred to as strict control, induces a relaxed control by setting
\begin{equation*}
    \mathfrak{r}_t(da)dt=\delta_{\alpha_t}(da)dt.
\end{equation*}
Finally, using the map $\overline{q}$ described above, we can safely identify every $\mathbb{F}$-adapted relaxed control $\mathfrak{r}$ with the unique (up to $Leb_{[0,T]}$-a.e. equality) $\mathbb{F}$-progressively measurable process $(\mathfrak{r}_t)_{t \in [0,T]}$ with values in $\mathcal{P}(A)$ so that
\begin{equation*}
    \prob(\mathfrak{r}(da,dt)=\mathfrak{r}_t(da)dt)=1.
\end{equation*}
In the following, we will use mostly the notation $(\mathfrak{r}_t)_{t \in [0,T]}$ for a relaxed control and will make no distinction between a $\mathcal{V}$-valued random variable and a $\mathcal{P}(A)$-valued process.

\section{Weak and strong existence for controlled equations}\label{appendix_weak_strong_sol}

We state and prove a Yamada-Watanabe type result for stochastic differential equations with random coefficients as the ones encountered so far.
Recall from Section \ref{existence:sezione_crtl_rilassati} the definition of the space $\mathcal{V}$ and of relaxed controls.

Let $\mathfrak{U}=((\Omega,\F,\mathbb{F},\prob),\xi,W,\mu,\mathfrak{r})$ be a tuple composed by a filtered probability space satisfying usual assumptions, an $\mathbb{F}$-Brownian motion $W$, an $\R^d$-valued $\F_0$-measurable random variable $\xi$, an $\F_0$-measurable random flow of measures $\mu$ taking values in  $\contpdue$ and an $\mathbb{F}$-adapted $\mathcal{V}$-valued random variable $\mathfrak{r}$, in the sense that the random variables $\mathfrak{r}(C)$ are $\F_t$-measurable for every $C \in \boreliani{[0,t]\times A}$.
Let us consider the following stochastic differential equation:
\begin{equation}\label{weak_uniqueness:sde}
    dX_t=G(t,X_t,\mu,\mathfrak{r})dt + dW_t, \quad X_0=\xi.
\end{equation}
where $G:[0,T] \times \R^d \times \contpdue \times \mathcal{V} \to \R^d$ is jointly measurable and progressively measurable in $\mathcal{V}$; progressive measurability must be understood in the following sense: for every $q,q' \in \mathcal{V}$, for every $(t,x,m) \in [0,T] \times \R^d \times \contpdue$, it holds:
\begin{equation*}
    q(C)=q'(C) \; \;  \forall C \in \boreliani{[0,t] \times A} \; \Longrightarrow \; G(t,x,m,q)=G(t,x,m,q').
\end{equation*}

\begin{definition}[Strong solution and pathwise uniqueness]\label{weak_uniqueness:def_pathwise_uniqueness}
Let $\mathfrak{U}=((\Omega,\F,\mathbb{F},\prob),\xi,W,\mu,\mathfrak{r})$ be a tuple as above.
A strong solution to equation \eqref{weak_uniqueness:sde} on $\mathfrak{U}$ is a continuous $\mathbb{F}$-adapted process $X=(X_t)_{t \in [0,T]}$ adapted to the $\prob$-augmentation of $\mathbb{F}$ so that
\begin{equation}
    X_t=\xi+\int_0^t G(s,X_s,\mu,\mathfrak{r})ds + W_t, \quad 0 \leq t \leq T,
\end{equation}
holds $\prob$-almost surely.

Pathwise uniqueness holds for equation \eqref{weak_uniqueness:sde} if, given two strong solutions $X$ and $X'$ to \eqref{weak_uniqueness:sde} on $\mathfrak{U}$, they are indistinguishable:
\begin{equation*}
    \prob(X_t=X'_t \;\; \forall t \in [0,T])=1.
\end{equation*}
\end{definition}

\begin{definition}[Weak solution and uniqueness in law]\label{weak_uniqueness:def_uniqueness_in_law}
A weak solution to equation \eqref{weak_uniqueness:sde} is a tuple $\mathfrak{U}=((\Omega,\F,\mathbb{F},\prob),\xi,W,\mu,\mathfrak{r})$ as above so that there exists a continuous $\mathbb{F}$-adapted process $X=(X_t)_{t \in [0,T]}$ satisfying equation \eqref{weak_uniqueness:sde}.

Weak uniqueness holds for equation \eqref{weak_uniqueness:sde} if for any two weak solution of \eqref{weak_uniqueness:sde} $\mathfrak{U}^i$, $i=1,2$, so that $\prob^1\circ(\xi^1,W^1,\mu^1,\mathfrak{r}^1)^{-1}=\prob^2\circ(\xi^2,W^2,\mu^2,\mathfrak{r}^2)^{-1}$, it holds
\begin{equation*}
    \prob^1\circ(X^1,\xi^1,W^1,\mu^1,\mathfrak{r}^1)^{-1}=\prob^2\circ(X^2,\xi^2,W^2,\mu^2,\mathfrak{r}^2)^{-1},
\end{equation*}
where $X^i$ are the continuous $\mathbb{F}^i$-adapted processes that satisfy equation \eqref{weak_uniqueness:sde} on $\mathfrak{U}^i$, $i=1,2$.
\end{definition}

\begin{theorem}\label{teorema_di_unicita_legge}
Suppose pathwise uniqueness holds for equation \eqref{weak_uniqueness:sde}, in the sense of Definition \ref{weak_uniqueness:def_pathwise_uniqueness}.
Then, uniqueness in law in the sense of Definition \ref{weak_uniqueness:def_uniqueness_in_law} holds as well.
\end{theorem}

\begin{proof}
Let $\mathfrak{U}^1$ and $\mathfrak{U}^2$ be two weak solutions of equation \eqref{weak_uniqueness:sde} in the sense of Definition \ref{weak_uniqueness:def_uniqueness_in_law} above.
Since pathwise uniqueness holds for equation \eqref{weak_uniqueness:sde} by assumption, our goal is to bring together the solution on the same filtered probability space.
Let us define the following probability measures:
\begin{equation*}
\begin{aligned}
    & \hat{\Q}^i = \prob^i\circ(\xi^i,W^i,\mu^i,\mathfrak{r}^i,X^i)^{-1} \in \mathcal{P}(\R^d \times \contrd \times \contpdue \times \mathcal{V} \times \contrd), \quad i=1,2, \\
    & \Q = \prob^i\circ(\xi^i,W^i,\mu^i,\mathfrak{r}^i)^{-1} \in \mathcal{P}(\R^d \times \contrd \times \contpdue \times \mathcal{V}), \\
    & \Tilde{\Q} = \prob^i\circ(\xi^i,W^i,\mu^i)^{-1} \in \mathcal{P}(\R^d \times \contrd \times \contpdue).
\end{aligned}
\end{equation*}
Observe that $\Q$ and $\Tilde{\Q}$ are well defined, since $(\xi^i,W^i,\mu^i,\mathfrak{r}^i)$ share the same joint law by assumption.
Let us consider the following space:
\begin{equation*}
\begin{aligned}
    \Omega^{can} & =\contrd \times \contrd \times \R^d \times \contrd \times \contpdue \times \mathcal{V}; \\
    \F^{can} & =\boreliani{\contrd} \otimes \boreliani{\contrd } \otimes \boreliani{\R^d } \otimes \boreliani{ \contrd } \otimes \boreliani{\contpdue } \otimes \boreliani{\mathcal{V}}; \\
    \mathcal{G}_t^{can} &  = \mathcal{B}_{t,\contrd} \otimes \mathcal{B}_{t,\contrd} \otimes \boreliani{\R^d} \otimes \mathcal{B}_{t,\contrd} \otimes \boreliani{\contpdue} \otimes \F_t^{\mathcal{V}},
\end{aligned}
\end{equation*}
where
\begin{equation*}
\begin{aligned}
    & \mathcal{B}_{t,\contrd}=\sigma(\contrd \ni x \mapsto x_s \in \R^d: \; s \leq t), && \F^{\mathcal{V}}_t=\sigma( \mathcal{V} \ni q \mapsto q(C) \in \R: \; C \in \boreliani{[0,t]\times A}).
\end{aligned}
\end{equation*}
In order to equip the space $(\Omega^{can},\F^{can},(\mathcal{G}_t^{can})_{t \in [0,T]})$ with a probability measure, we disintegrate the measures $\hat{\Q}^i$, $i=1,2$, in the following way:
let $K^i:\boreliani{\contrd} \times \R^d \times \contrd \times \contpdue \times \mathcal{V} \to [0,1]$ be a regular conditional probability of $\hat{\Q}^i$ for $\boreliani{\contrd}$ given $(x,w,m,q)$, so that it holds
\begin{equation*}
    \hat{\Q}^i( A \times B ) = \int_B K^i(A , x,m,w,q) \Q (dx,dm,dw,dq),
\end{equation*}
for every $A \in \boreliani{\contrd}$, $B \in \boreliani{\R^d} \otimes \boreliani{\contrd} \otimes \boreliani{\contpdue} \otimes \boreliani{\mathcal{V}}$, or more briefly
\begin{equation*}
    \hat{\Q}^i(dx,dw,dm,dq,dy)=K^i(dy,x,m,q,w)\Q(dx,dw,dm,dq), \: i=1,2.
\end{equation*}
Then, we set 
\begin{equation*}
    \overline{\Q}(dy^1,dy^2,dx,dm,dw,dq)=K^1(dy^1,x,m,q,w)K^2(dy^2,x,m,q,w)\Q (dx,dm,dw,dq).
\end{equation*}
Observe that the joint law under $\overline{\Q}$ of the coordinate projections $y^1$, $x$, $m$, $w$ and $q$ is exactly $\hat{\Q}^1$, and analogously when considering the coordinate process $y^2$ instead of $y^1$.
Finally, complete the $\sigma$-algebra $\F^{can}$ with the $\overline{\Q}$-null sets $\mathcal{N}^{\overline{\Q}}$ and consider the complete right continuous filtration $(\F_t^{can})_{t \in [0,T]}$ given by
\begin{equation*}
    \F_t^{can}=\bigcap_{\eps > 0} \sigma\tonde{\mathcal{G}_{t+\eps}, \mathcal{N}^{\overline{\Q}}}.
\end{equation*}
By Lemma \ref{lemma_moto_browniano}, the coordinate process $w$ is a $(\F^{can}_t)_{t \in [0,T]}$-Brownian motion under $\overline{\Q}$.
Furthermore, it holds
\begin{equation*}
\begin{aligned}
    y^i_t=x + \int_0^t G(s,y^i_s,m,q)ds + w_t, \; \forall t \in [0,T], \; \overline{\Q}\text{-a.s.}
\end{aligned}
\end{equation*}
for $i=1,2$.
Since pathwise uniqueness in the sense of Definition \ref{weak_uniqueness:def_pathwise_uniqueness} holds by assumption, it follows that $y^1$ and $y^2$ are indistinguishable under $\overline{\Q}$, which implies $\hat{\Q}^1=\hat{\Q}^2$.
This proves the desired result.
\end{proof}

\begin{lemma}\label{lemma_moto_browniano}
In the construction of Theorem \ref{teorema_di_unicita_legge}, $w=(w_s)_{s \in [0,T]}$ is a Brownian motion under $\overline{\Q}$ with respect to the filtration $(\F^{can}_s)_{s \in [0,T]}$.
\end{lemma}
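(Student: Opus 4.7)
I would invoke L\'evy's characterization of Brownian motion: it suffices to verify that, under $\overline{\Q}$, $w$ is a continuous process starting at $0$, and that for every $0\leq s<t\leq T$ the increment $w_t-w_s$ is $\mathcal{N}(0,(t-s)I_d)$-distributed and independent of $\F^{can}_s$. Continuity and $w_0=0$ are immediate from the canonical product construction. The Gaussian distribution of $w_t-w_s$ also follows at once: by definition of $\overline{\Q}$, its marginal on the $w$-factor equals the marginal of $\Q$ on the $w$-factor, which coincides with $\wienermeasure$ since each $W^i$ is a $\prob^i$-Brownian motion. The only non-trivial point is therefore the independence $w_t-w_s\perp\F^{can}_s$ under $\overline{\Q}$.

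By a $\pi$-system / monotone-class argument (the augmentation and right-continuity of $\mathcal{G}^{can}_\cdot$ play no role, since $w_t-w_s$ is continuous and independent of $\overline{\Q}$-null sets), it suffices to check the independence on events $A=A^1\cap A^2\cap A^0$ with $A^i\in\sigma(y^i_r:\,r\leq s)$ for $i=1,2$ and $A^0\in\F^\Q_s:=\sigma(x,m,w_r,q([0,r]\times\cdot):\,r\leq s)$. Unpacking the definition of $\overline{\Q}$ and integrating out $y^1,y^2$ gives
\begin{equation*}
\overline{\Q}\bigl(A\cap\{w_t-w_s\in D\}\bigr)=\int \mathbf{1}_{A^0}\,\mathbf{1}_D(w_t-w_s)\,\psi^1(x,m,q,w)\psi^2(x,m,q,w)\,\Q(dx,dm,dw,dq),
\end{equation*}
with $\psi^i(x,m,q,w):=K^i(A^i,x,m,q,w)$. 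Under $\Q$, the coordinate process $w$ is a Brownian motion with respect to $\F^\Q_\cdot$, because $W^i$ is $\mathbb{F}^i$-Brownian under $\prob^i$ while $\mu^i$ is $\F^i_0$-measurable and $\mathfrak{r}^i$ is $\mathbb{F}^i$-progressively measurable; hence $w_t-w_s$ is $\Q$-independent of $\F^\Q_s$, and the above integral will factorise into $\overline{\Q}(A)\cdot\prob(N\in D)$, with $N\sim\mathcal{N}(0,(t-s)I_d)$, \emph{provided} $\psi^1\psi^2$ is $\F^\Q_s$-measurable.

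The main obstacle I expect is thus the \emph{causality} of the regular conditional probabilities: one must select a version of each $K^i$ so that, for every $A^i\in\sigma(y^i_r:\,r\leq s)$, the function $(x,m,q,w)\mapsto K^i(A^i,x,m,q,w)$ depends on $(x,m,q,w)$ only through $(x,m,q|_{[0,s]},w_{\cdot\wedge s})$. This is the familiar measurability subtlety underlying Yamada--Watanabe-type arguments. It is handled by the following observation: under $\hat{\Q}^i$ the process $y^i$ is adapted to a filtration inherited from $\mathbb{F}^i$ with respect to which $w$ remains a Brownian motion, so the conditional law of $y^i_{\cdot\wedge s}$ given the full $(x,m,q,w)$ coincides $\hat{\Q}^i$-a.s.\ with its conditional law given $(x,m,q|_{[0,s]},w_{\cdot\wedge s})$; choosing the latter regular version yields a causal $K^i$. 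Once both $K^i$ are taken in this form, the factorisation above goes through and L\'evy's criterion delivers that $w$ is an $(\F^{can},\overline{\Q})$-Brownian motion.
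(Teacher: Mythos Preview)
Your overall strategy is sound and you correctly locate the crux: one must show that the functions $\psi^i=K^i(A^i,\cdot)$ are $\F^\Q_s$-measurable. However, your justification of this causality has a gap. You argue that since $w$ remains a Brownian motion under $\hat{\Q}^i$ with respect to the filtration inherited from $\mathbb{F}^i$, the conditional law of $y^i_{\cdot\wedge s}$ given the full $(x,m,q,w)$ coincides with its conditional law given $(x,m,q|_{[0,s]},w_{\cdot\wedge s})$. But the Brownian property only yields that the \emph{future increments} $(w_u-w_s)_{u>s}$ are independent of $\sigma(y^i_{\cdot\wedge s}, x, m, q|_{[0,s]}, w_{\cdot\wedge s})$; it says nothing about the \emph{future control} $q|_{(s,T]}$. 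Since $\mathfrak{r}^i$ is merely $\mathbb{F}^i$-progressively measurable, $\mathfrak{r}^i|_{(s,T]}$ can carry information about $X^i_{\cdot\wedge s}$---through extra randomness present in $\mathbb{F}^i$ beyond $(\xi^i,\mu^i,W^i,\mathfrak{r}^i|_{[0,s]})$---that is not recoverable from $(\xi^i,\mu^i,W^i_{\cdot\wedge s},\mathfrak{r}^i|_{[0,s]})$. Hence conditioning on the whole path $q$ need not be causal, and your $\psi^i$ need not be $\F^\Q_s$-measurable.

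The paper sidesteps precisely this difficulty by disintegrating differently: rather than conditioning $\hat{\Q}^i$ on $(x,m,w,q)$, it conditions only on $(x,m,w)$ and groups the control with the state in a kernel $\Theta^i(dy^i,dq\mid x,m,w)$. For $A\times F \in \mathcal{B}_{s,\contrd}\otimes\F^{\mathcal{V}}_s$, the map $(x,m,w)\mapsto\Theta^i(A\times F,x,m,w)$ is then $\boreliani{\R^d}\otimes\boreliani{\contpdue}\otimes\mathcal{B}_{s,\contrd}$-measurable by the Brownian-increment independence alone (exactly as in Ikeda--Watanabe, Lemma~IV.1.1), with no conditional-independence hypothesis on the control required. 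The paper then reduces from $\overline{\Q}$ to each marginal $\hat{\Q}^i$ via a Cauchy--Schwarz step, rather than working directly with the product $K^1K^2$ as you do. The decisive idea you are missing is therefore to \emph{not} condition on $q$ but to treat $(y^i,q)$ jointly when forming the regular conditional probability.
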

\begin{proof}
Observe that $w$ is a natural Brownian motion under $\overline{\Q}$.
In order to show that it is a Brownian motion with respect to the filtration $(\mathcal{G}_t^{can})_{t \in [0,T]}$, we just need to prove that its increments are independent, and the conclusion follows.

Fix $A_1,A_2 \in \mathcal{B}_{t,\contrd}$, $B \in \boreliani{\R^d}$, $C \in \mathcal{B}_{t,\contrd}$, $D \in \boreliani{\contpdue}$ and $F \in \F_t^{\mathcal{V}}$.
By Cauchy-Schwartz inequality, we have, for every $H \in \boreliani{\R^d}$ and $s>t$:
\begin{equation*}
\begin{aligned}
    \E^{\overline{\Q}} & \quadre{\1_H(w_s-w_t)\1_{A_1 \times A_2 \times B \times D \times C \times F}(y^1,y^2,x,m,w,q)}^2 \\
    \leq & \E^{\overline{\Q}}\Big[ \1_H(w_s-w_t)\1_{A_1 \times B \times D \times C \times F}(y^1,x,m,w,q)\Big] \\
    & \cdot \E^{\overline{\Q}}\Big[\1_H(w_s-w_t)\1_{A_2 \times B \times D \times C \times F}(y^2,x,m,w,q) \Big].
\end{aligned}
\end{equation*}
Therefore, it suffices to show that 
\begin{equation}\label{lemma_moto_browniano:indipendenza_incrementi_singoli}
\begin{aligned}
    \E^{\overline{\Q}}\Big[ \1_H(w_s-w_t)\1_{A_1 \times B \times D \times C \times F}(y^1,x,m,w,q)\Big]=0.
\end{aligned}
\end{equation}
Since the integrand does not depend upon $y^2$, we may rewrite such an expectation only with respect to $\hat{\Q}^1$:
\begin{equation*}
\begin{aligned}
    \E^{\overline{\Q}} & \Big[ \1_H(w_s-w_t)\1_{A_1 \times B \times D \times C \times F}(y^1,x,m,w,q)\Big] \\
    = & \int \1_H(w_s-w_t) \1_{A_1 \times B \times D \times C \times F}(y^1,x,m,w,q)\hat{\Q}^1(dy^1,dx,dm,dw,dq).
\end{aligned}
\end{equation*}
Then, we introduce another disintegration of the measure $\hat{\Q}^1$: let $\Theta^1$ be a regular conditional probability for $\boreliani{\contrd}\otimes \boreliani{\mathcal{V}}$ given $(x,w,m)$:
\begin{equation}\label{lemma_moto_browniano:dinsintegrazione_senza_controllo}
    \hat{\Q}^1\tonde{ A \times B \times C \times D \times F } = \int_{ B \times C \times D } \Theta^1(A\times F, x,m,w)\Tilde{\Q}(dx,dm,dw),
\end{equation}
for every $A \in \boreliani{\contrd}$, $B \in \boreliani{\R^d}$, $C \in \boreliani{\contrd}$, $D \in \boreliani{\contpdue}$ and $F \in \boreliani{\mathcal{V}}$, or more briefly
\begin{equation*}
    \hat{\Q}^1(dy^1,dq,dx,dw,dm)= \Theta^1(dy^1,dq,x,m,w)\Tilde{\Q}(dx,dw,dm).
\end{equation*}
As in \cite[Lemma IV.1.1]{ikeda_watanabe1981sdes}, it can easily be shown that,
for every $A\times F \in \mathcal{B}_{s,\contrd} \otimes \F_s^{\mathcal{V}}$, the map
\begin{equation*}
    (x,m,w) \mapsto \Theta^1(A\times F,x,m,w)  
\end{equation*}
is $\boreliani{\R^d} \otimes \boreliani{\contpdue}\otimes \mathcal{B}_{s,\contrd}$-measurable, for every $s \in [0,T]$.
Therefore, we can compute the left-hand side of \eqref{lemma_moto_browniano:indipendenza_incrementi_singoli}:
\begin{equation*}
\begin{aligned}
    \E^{\overline{\Q}} & \Big[ \1_H(w_s-w_t)\1_{A_1 \times B \times D \times C \times F}(y^1,x,m,w,q)\Big] \\
    = & \int \1_H(w_s-w_t) \1_{A_1 \times B \times D \times C \times F}(y^1,x,m,w,q)\hat{\Q}^1(dy^1,dx,dm,dw,dq) \\
    = & \int \1_H(w_s-w_t) \Theta^1(A_1\times F, x,m,w)\1_{B \times D \times C}(x,m,w)\Tilde{\Q}^1(dx,dm,dw) \\
    = & \E^{\prob^{1}}\quadre{ \1_H(W^1_s-W^1_t) \Theta^1(A_1\times F, \xi^1,\mu^1,W^1)\1_{B \times D \times C}(\xi^1,\mu^1,W^1)} = 0,
\end{aligned}
\end{equation*}
since $\Theta^1(A_1\times F, \xi^1,\mu^1,W^1)\1_{B \times D \times C}(\xi^1,\mu^1,W^1)$ is $\F^1_s$-measurable and $W^1$ is an $\mathbb{F}^1$-Brownian motion under $\prob^1$ by assumption.
\end{proof}

\section{On admissible recommendations}\label{appendix_recommendations}

\begin{lemma}\label{esempi:lemma_misurabile}
Let $(\Omega^0,\F^{0-},\prob^0)$ be a complete probability spaces and $(\Omega^*,\F^*,\mathbb{F}^*,\prob^*)$ be a filtered probability space satisfying the usual assumptions.
Fix a bounded $A$-valued process $(\lambda_t)_{t \in [0,T]}$ defined on the completion of the product space $(\Omega^0\times\Omega^*,\F^{0-}\otimes\F,\prob^0\otimes\prob^*)$.
Assume that it is progressively measurable with respect to the filtration $\mathbb{F}=(\F_t)_{t \in [0,T]}$, where $\mathbb{F}$ is the $\prob^0\otimes\prob^*$-augmentation of the filtration $(\F^{0-}\otimes\F^*_t)_{t \in [0,T]}$.
Define a function $\Lambda:\Omega^0\to \A$ by setting
\begin{equation}\label{appendix:raccomandazione_indotta}
\begin{aligned}
    \Lambda(\omega_0)=\left \{ \: \begin{aligned}
        & \begin{aligned}
            (\lambda_{t}(\omega_0,\cdot))_{t \in [0,T]} &  :\space [0,T] \times \Omega^* \to A \\
        & (t,\omega_*) \to \lambda_t(\omega_0,\omega_*), 
        \end{aligned} &&  \omega^0 \in \Omega^0\setminus N, \\
        & a_0 && \omega_0 \in N.
    \end{aligned} \right.
\end{aligned}
\end{equation}
where $N\subset \Omega^0$ is a $\prob^0$-null set and $a_0$ is some point in $A$.
The function $\Lambda$ defined in \eqref{appendix:raccomandazione_indotta} is an admissible recommendation.
\end{lemma}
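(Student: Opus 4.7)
The content of the lemma splits into three claims: (a) for $\prob^0$-a.e. $\omega_0 \in \Omega^0$, the section $(\lambda_t(\omega_0,\cdot))_{t \in [0,T]}$ is $\mathbb{F}^*$-progressively measurable and hence defines an element of $\A$; (b) the induced map $\Lambda:\Omega^0\to(\A,\boreliani{\A})$ is $\F^{0-}$-measurable; (c) the identification \eqref{mf:uguaglianza_ammissibilita} holds with the process $\lambda$ itself as strategy associated to $\Lambda$. Claim (c) is automatic: on $\Omega^0\setminus N$ the two objects coincide pointwise in $(t,\omega_*)$ by definition \eqref{appendix:raccomandazione_indotta}, so the $L^2([0,T]\times\Omega^*)$ norm of their difference is zero. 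One then chooses the null set $N$ in \eqref{appendix:raccomandazione_indotta} to contain the exceptional set produced by step (a), so that $\Lambda(\omega_0)\in\A$ for every $\omega_0\in\Omega^0$, and it remains to prove (a) and (b).

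For (a), since $\mathbb{F}$ is the $\prob$-augmentation of $(\F^{0-}\otimes\F^*_t)_{t\in[0,T]}$, a standard completion result lets us replace $\lambda$, up to a $Leb_{[0,T]}\otimes\prob$-null set, by a version measurable with respect to the progressive $\sigma$-algebra generated by the non-augmented filtration $(\F^{0-}\otimes\F^*_t)_{t\in[0,T]}$; this modification is irrelevant for the validity of \eqref{mf:uguaglianza_ammissibilita}. I then argue by monotone class on the progressive $\sigma$-algebra: for generating rectangles of the form $C=(s,t]\times B\times D$ with $B\in\F^{0-}$ and $D\in\F^*_s$, the $\omega_0$-section is $(s,t]\times D$ when $\omega_0\in B$ and empty otherwise, so it lies in the progressive $\sigma$-algebra of $\mathbb{F}^*$; a monotone class argument based on Fubini applied to $\boreliani{[0,t]}\otimes\F^{0-}\otimes\F^*_t$ for a countable dense set of $t$'s, combined with right-continuity of $\mathbb{F}^*$, transfers this property to every progressively measurable set and yields a single $\prob^0$-null set outside which every section is progressive.

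For (b), I exploit that $\A$ is a separable Banach space and use an $L^2$ approximation. First, for elementary processes of the form $\lambda_t(\omega_0,\omega_*)=\sum_{i=1}^n \1_{B_i}(\omega_0)\alpha^i_t(\omega_*)$ with $\{B_i\}\subset\F^{0-}$ a partition and $\alpha^i\in\A$, the induced $\Lambda$ takes only finitely many values with $\F^{0-}$-measurable preimages, hence is trivially $\F^{0-}/\boreliani{\A}$-measurable. By Fubini's theorem, the map $\lambda\mapsto(\omega_0\mapsto\lambda(\cdot,\omega_0,\cdot))$ is an isometry from $L^2([0,T]\times\Omega^0\times\Omega^*;\R^l)$ onto $L^2(\Omega^0;L^2([0,T]\times\Omega^*;\R^l))$, and it maps the closed subspace of $\mathcal{P}$-measurable $A$-valued processes into $L^2(\Omega^0;\A)$ thanks to step (a) and the fact that $A$ compact implies $\A$ is closed in $L^2$. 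The elementary processes above are dense in this subspace; extracting an a.s.\ convergent subsequence yields that $\Lambda$ is the $\prob^0$-a.s.\ limit of a sequence of $\F^{0-}$-measurable $\A$-valued maps, hence is itself $\F^{0-}/\boreliani{\A}$-measurable.

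The main obstacle is the Fubini-type step (a): progressive measurability is a family of conditions indexed by $t\in[0,T]$, so one cannot directly invoke the classical Fubini result (which produces an $\omega_0$-dependent null set for each fixed $t$). The key is to combine the monotone-class approach to the progressive $\sigma$-algebra with right-continuity of $\mathbb{F}^*$ to collapse the uncountable family of exceptional sets into a single $\prob^0$-null set.
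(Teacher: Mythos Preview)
Your plan is correct and follows essentially the same route as the paper: reduce to elementary processes, then pass to the limit. The paper packages this as a functional monotone class argument---it shows that the set $\mathcal{H}$ of bounded progressive $\lambda$ for which the associated $\Lambda$ is $\F^{0-}/\boreliani{L^2}$-measurable is a monotone class containing the step processes $\sum_i\zeta^i\1_{[t_i,t_{i+1})}$ with $\zeta^i$ bounded $\F_{t_i}$-measurable---whereas you phrase it through the Bochner isometry $L^2([0,T]\times\Omega^0\times\Omega^*)\cong L^2\bigl(\Omega^0;L^2([0,T]\times\Omega^*)\bigr)$, density of tensor-type simple processes $\sum_i\1_{B_i}(\omega_0)\alpha^i(\omega_*)$, and extraction of an a.s.\ convergent subsequence. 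These are different toolkits for the same idea; your isometry framing is arguably cleaner conceptually, while the paper's monotone class is more self-contained.

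On your step (a), the paper dispatches it in a single line (``by construction of the filtration $\mathbb{F}$''), so your caution is more scrupulous than the original. Note, however, that your proposed countable-$t$/right-continuity device is heavier than needed: once you pass to a version progressive with respect to the \emph{non-augmented} product filtration $(\F^{0-}\otimes\F^*_t)_t$, the $\omega_0$-section is $\boreliani{[0,t]}\otimes\F^*_t$-measurable for \emph{every} $\omega_0$ and every $t$, simply by the Fubini structure of the product $\sigma$-algebra $\boreliani{[0,t]}\otimes\F^{0-}\otimes\F^*_t$---no null set appears at this stage. The only exceptional $\prob^0$-null set enters through the reduction from the augmented to the non-augmented filtration, and that is a single null set independent of $t$.
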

\begin{proof}
Take any bounded $(\F)_{t \in [0,T]}$-progressively measurable process $(\lambda_t)_{t \in [0,T]}$ defined on the product space $(\Omega^0\times\Omega^*,\F^{0-}\otimes\F^*,\prob^0\otimes\prob^*)$ taking values in $\R$, and not necessarily in $A$.

\smallskip
Observe that it is always possible to define a function $\Lambda$ from $(\Omega^0,\F^{0-},\prob^0)$ to $(L^2([0,T]\times \Omega^*;\mathcal{R}^*,Leb_{[0,T]}\otimes\prob^*),\boreliani{L^2})$ as in \eqref{appendix:raccomandazione_indotta}, where $\mathcal{R}^*$ denotes the progressive $\sigma$-algebra associated to the filtration $\mathbb{F}^*$.
Indeed, by construction of the filtration $\mathbb{F}$, since $\lambda$ is $\mathbb{F}$-progressively measurable, there exists a set $N \subset \Omega^0$, $\prob^0(N)=0$, so that the section $(\lambda_t(\omega_0,\cdot))_{t \in [0,T]}$ is $\mathcal{R}^*$-measurable for every $\omega_0 \in \Omega^0 \setminus N$.
Set $\Lambda(\omega_0)=(\lambda_t(\omega_0,\cdot)_{t \in [0,T]})$ for $\omega_0 \in \Omega^0 \setminus N$ and $\Lambda(\omega_0) \equiv a_0$ for $\omega_0 \in N$, where $a_0$ is any point in $\R$, which is exactly \eqref{appendix:raccomandazione_indotta}.

\smallskip
Let $\mathcal{H}$ be the set of bounded progressively measurable processes $\lambda$ so that the function $\Lambda$ defined according to \eqref{appendix:raccomandazione_indotta} is a $\F^{0-}\setminus\boreliani{L^2}$ measurable random variable.
We show that $\mathcal{H}$ is a monotone class which contains the set $\mathcal{E}$ of progressively measurable processes $\lambda:[0,T]\times\Omega^0\times\Omega^* \to \R $ of the form
\begin{equation}\label{esempi:processi_semplici}
    \lambda_t=\sum_{i=1}^n \zeta^i\1_{[t_i,t_{i+1})}(t),
\end{equation}
where $n \geq 1$, $t_i \in [0,T]$, $t_i<t_{i+1}$ for every $i=1,\dots,n$, $\zeta^i$ are bounded $\F_{t_i}$-measurable random variables.
Having established such properties, we apply monotone class theorem (as stated, e.g., in \cite[Theorem II.3.1]{rogerswilliams_vol1}) to conclude that $\mathcal{H}$ contains the set of $\mathbb{F}$-progressively measurable bounded processes defined on the product space $\Omega^0\times\Omega^*$.

\smallskip
To see that $\mathcal{H}$ is a monotone class, observe that $\mathcal{H}$ is clearly a vector space and contains all processes $\lambda$ so that $\lambda_t\equiv c$ for every $t \in [0,T]$, for all $c \in \R$.
Let $(\lambda^n)_{n \geq 1} \subseteq \mathcal{H}$, with $\lambda^n \uparrow \lambda$ as $n$ goes to infinity, $\lambda^n$ positive and bounded by the same constant $C\geq 0$ for every $n$.
By monotone convergence, $\lambda$ is bounded and $\F^{0-} \otimes \mathcal{R}^*$-measurable as well, so that we can define $\Lambda$ as in \eqref{appendix:raccomandazione_indotta}, as previously discussed.
Let $\Lambda^n$ be the $L^2$-valued random variables defined starting from $\lambda^n$ according to \eqref{appendix:raccomandazione_indotta}, which are $\F^{0-} \setminus \boreliani{L^2}$ measurable since $\lambda^n$ belongs to $\mathcal{H}$ for every $n \geq 1$, by assumption.
Without loss of generality, we can suppose that the $\prob^0$-null set $N$ appearing in the definition of $\Lambda^n$ and $\Lambda$ is the same for every $n \geq 1$.
Notice that, for every $\omega_0 \in \Omega^0 \setminus N$, the sections $(\lambda^n_t(\omega_0,\cdot)_{t \in [0,T]}) \uparrow (\lambda_t(\omega_0,\cdot)_{t \in [0,T]})$ for every $(t,\omega_*) \in [0,T] \times \Omega^*$.
Therefore, by monotone convergence, it holds 
\begin{equation}\label{esempi:approssimazione_raccomdandazione}
\begin{aligned}
    \norm{\Lambda^n(\omega_0)-\Lambda(\omega_0)}^2_{L^2} & = \norm{(\lambda^n_t(\omega_0,\cdot)_{t \in [0,T]}) -(\lambda_t(\omega_0,\cdot)_{t \in [0,T]})}^2_{L^2} \\
    & =\E^{\prob^*}\quadre{\int_0^T \abs{\lambda^n(t,\omega_0,\omega_*)-\lambda(t,\omega_0,\omega_*)}^2 dt}\to 0
\end{aligned}
\end{equation}
for every $\omega_0 \in \Omega^0\setminus N$, i.e. $\Lambda=\lim_{n\to\infty}\Lambda^n$ $\prob^0$-a.s., which implies that $\Lambda$ is $\F^{0-}\setminus\boreliani{L^2}$ measurable, since the probability space is complete and $L^2([0,T]\times\Omega^*,\mathcal{R}^*,Leb_{[0,T]}\otimes\prob^*)$ is a complete norm space.
Finally, $\Lambda$ is admissible, since the process $\lambda$ obviously satisfies \eqref{mf:uguaglianza_ammissibilita}, choosing the same $\prob^0$-null set $N$ used in the definition $\Lambda$.

\smallskip
To see that $\mathcal{E} \subseteq \mathcal{H}$, suppose first that $\lambda$ is of the form 
\begin{equation*}
    \lambda_t=\sum_{i=1}^n \1_{A_i}(\omega_0)\1_{B_i}(\omega_*)\1_{[t_i,t_{i+1})}(t),
\end{equation*}
where $n \geq 1$, $t_i \in [0,T]$, $t_i<t_{i+1}$ for every $i=1,\dots,N$, $A_i\in\F^{0-}$ and $B_i\in\F^*_{t_i}$.
We can regard each variable $\1_{B_i}(\omega_*)\1_{[t_i,t_{i+1})}(t)$ as a bounded progressively measurable process $\alpha^i$.
Therefore, \eqref{appendix:raccomandazione_indotta} takes the following form:
\begin{equation*}
    \Lambda(\omega_0)=\begin{cases}
    \alpha^i \; & \omega_0 \in  A_i, \qquad i=1,\dots,N, \\
    0 \; & \omega_0 \in \tonde{\cup_{i=1}^n A_i}^c
    \end{cases}
\end{equation*}
which shows that $\Lambda$ is $\F^{0-}\setminus\boreliani{L^2}$-measurable.
By Dynkin Lemma, conclusion holds true for progressively measurable simple processes of the form
\begin{equation}\label{esempi:processo_intermedio}
    \lambda_t=\sum_{i=1}^n \1_{C_i}(\omega_0,\omega_*)\1_{[t_i,t_{i+1})}(t),
\end{equation}
where $n \geq 1$, $t_i \in [0,T]$, $t_i<t_{i+1}$ for every $i\in\insieme{1,\dots,n}$, $C_i\in\F^{0-}\otimes\F^*_{t_i}$.
Finally, let $\lambda$ be of the form \eqref{esempi:processi_semplici}.
Thanks to the boundedness assumption on $\zeta^i$, we can find a sequence of simple processes $(\lambda^n)_{n\geq 1}$ of the form \eqref{esempi:processo_intermedio} so that $\abs{\lambda^n}\leq \abs{\lambda}$ and $\lambda^{n}_t(\omega_0,\omega_*)\to\lambda_t(\omega_0,\omega_*)$ pointwise for every $(t,\omega_0,\omega_*)$.
Let $\Lambda^n$ and $\Lambda$ be defined according to \eqref{appendix:raccomandazione_indotta} starting by the processes $\lambda^n$.
Due to point 2.b) above, conclusion holds true for each $\Lambda^n$.
Using dominated convergence, we can prove that \eqref{esempi:approssimazione_raccomdandazione} holds for $\prob^0$-a.e. $\omega_0 \in \Omega^0$, so that $\Lambda$ is the a.s. pointwise limit  of $\Lambda^n$, which implies that $\Lambda$ is $\F^{0-}\setminus \boreliani{L^2}$ measurable. 
\end{proof}

\begin{proposition}\label{esempi:unicita_strategia_associata}
\begin{enumerate}[label=\roman*), wide]
Let $(\Omega^0,\F^{0-},\prob^0)$ be a complete probability space.
\item \label{esempi:unicita_strategia_associata:unicita_associata}  Let $\Lambda:(\Omega^0,\F^{0-},\prob^0)\to(\A,\boreliani{\A})$ be an admissible recommendation.
Let $\lambda^1$ and $\lambda^2$ be two $\mathbb{F}$-progressively measurable processes with values in $A$ so that \eqref{mf:uguaglianza_ammissibilita} holds.
Then $\lambda^1=\lambda^2$ $Leb_{[0,T]}\otimes\prob$-almost surely.
\item \label{esempi:unicita_strategia_associata:unicita_raccomandazione} Let $\Lambda,\Gamma:(\Omega^0,\F^{0-},\prob^0)\to(\A,\boreliani{\A})$ be admissible recommendations; let $\lambda,\gamma$ be the strategies associated to $\Lambda,\Gamma$, according to \eqref{mf:uguaglianza_ammissibilita}.
Suppose that $\lambda=\gamma$ $Leb_{[0,T]}\otimes \prob$-almost surely.
Then, $\Lambda=\Gamma$ $\prob^0$-a.s.
\end{enumerate}
\end{proposition}
\begin{proof}
As for point \ref{esempi:unicita_strategia_associata:unicita_associata}, let $N^i$, $i=1,2$, be two $\prob^0$-null sets so that, for every $\omega_0 \in \Omega_0\setminus N^i$ the sections $(\lambda^i_t(\omega_0,\cdot))_{t \in [0,T]}$ are $\mathbb{F}^*$-progressively measurable processes and equation \eqref{mf:uguaglianza_ammissibilita} holds true.
Without loss of generality, we can assume that $N^1=N^2=N$.
Since for every $\omega_0 \in \Omega^0 \setminus N$ it holds $\norm{\Lambda - (\lambda^i_t(\omega_0,\cdot))_{t \in [0,T]}}_{L^2}=0$, $i=1,2$, we deduce that
\begin{equation*}
    \norm{(\lambda^1_t(\omega_0,\cdot))_{t \in [0,T]} - (\lambda^2_t(\omega_0,\cdot))_{t \in [0,T]}}^2_{L^2} = 0
\end{equation*}
for every $\omega_0 \in \Omega^0 \setminus N$.
Therefore, by taking the integral with respect to $\prob^0$, we obtain
\begin{equation*}
\begin{aligned}
    0 & = \E^{\prob^0} \quadre{ \norm{(\lambda^1_t(\omega_0,\cdot))_{t \in [0,T]} - (\lambda^2_t(\omega_0,\cdot))_{t \in [0,T]}}^2_{L^2} } \\
    & = \E^{\prob^0} \quadre{ \E^{\prob^*} \quadre{ \int_0^T \vert \lambda^1_s(\omega_0,\omega_*) - \lambda^2_s(\omega_0,\omega_*) \vert^2 ds  } } = \E\quadre{ \int_0^T \vert \lambda^1_s(\omega_0,\omega_*) - \lambda^2_s(\omega_0,\omega_*) \vert^2 ds  }
\end{aligned}
\end{equation*}
by Fubini's theorem.
This is enough to conclude that $\lambda^1=\lambda^2$ $Leb_{[0,T]}\otimes\prob$-a.s.

\smallskip
As for point \ref{esempi:unicita_strategia_associata:unicita_raccomandazione}, by the same line of reasoning, if $\lambda=\gamma$ $Leb_{[0,T]}\otimes\prob$-a.s., the the sections $(\lambda_t(\omega_0,\cdot))_{t \in [0,T]}$ and $(\gamma_t(\omega_0,\cdot))_{t \in [0,T]}$ are $Leb_{[0,T]}\otimes\prob^*$-almost everywhere equal, which implies that
\begin{equation*}
\begin{aligned}
    \norm{\Lambda(\omega_0)-\Gamma(\omega_0)}^2_{L^2} & = \norm{(\lambda_t(\omega_0,\cdot))_{t \in [0,T]} - (\gamma_t(\omega_0,\cdot))_{t \in [0,T]}}^2_{L^2} \\
    & = \E^{\prob^*}\quadre{\int_0^T \vert \lambda_t(\omega_0,\omega_*) - \gamma_t(\omega_0,\omega_*) \vert^2 dt } = 0
\end{aligned}
\end{equation*}
$\prob^0$-a.s., so that $\Lambda=\Gamma$ $\prob^0$-a.s.
\end{proof}

\section{Propagation of chaos}\label{sec:propagation_of_chaos}

Here, we prove the propagation of chaos type result which is needed in the proof of Theorem \ref{thm_approssimazione}.
The probability spaces and the random variables we use here are defined in Section \ref{sec:approximation:sezione_costruzione_raccomandazioni}.

We work on the product probability space
\begin{equation*}
    \tonde{\Omega,\F,\prob}= \tonde{\overline{\Omega},\overline{\F},\overline{\prob}} \otimes \tonde{\Omega^1, \F^1, \prob^1},
\end{equation*}
with $(\overline{\Omega},\overline{\F},\overline{\prob})$ defined by \eqref{approximation:spazio_raccomandazioni} and \eqref{approximation:legge_spazio_raccomandazioni} and $(\Omega^1, \F^1, \prob^1)$ by \eqref{canonical_setup} or, equivalently, by \eqref{approximation:spazio_rumori}.
Consider the random measure flow $\mu$ defined by \eqref{approximation:approx_misura} and the recommendations $(\Lambda^i)_{i \geq 1}$ defined by \eqref{approximation:approx_raccomandazione}, which we recall are conditionally i.i.d. given $\mu$ under $\overline{\prob}$.
We endow such a probability space with the filtration $\mathbb{F}$ given by the $\prob$-augmentation of the filtration generated by $\overline{\F}$, the initial data $(\xi^i)_{i\geq 1}$ and the Brownian motions $(W^i)_{i\geq 1}$.
We observe that for every $N\geq 2$, each $\beta \in \A_N$ is also $\mathbb{F}$-progressively measurable and, for every $i \geq 1$, each strategy $\lambda^i$ associated to the admissible recommendation $\Lambda^i$ is $\mathbb{F}$-progressively measurable as well.

\smallskip
Fix $N\geq 2$, $\beta \in \A_N$ and $1 \leq i \leq N$.
Let $X=X [\Lambda^{N,-i},\beta]=\tonde{X^j[\Lambda^{N,-i},\beta]}_{j = 1}^N$ be the solution of
\begin{equation*}
    \begin{cases}
    dX^j_t=b(t,X^j_t,\mu^N_t,\lambda^{j}_t)dt + dW^j_t, \quad X^j_0=\xi^j, \quad j \neq i, \\
    dX^i_t=b(t,X^i_t,\mu^N_t,\beta_t)dt + dW^i_t, \quad X^i_0=\xi^i.
    \end{cases}
\end{equation*}
The process $X[\Lambda^{N,-i},\beta]$ is the state process of the $N$-players when every player $j \neq i$ follows the recommendation $\Lambda^{i}$ and player $i$ deviates by picking the strategy $\beta$, where $\mu^N_t$ denotes the empirical measure of the $N$-players' states at time $t$ defined in \eqref{misura_empirica}.
Let us introduce also the empirical measure of the processes $X=X[\Lambda^{N,-i},\beta]$:
\begin{equation}
    \mu^N[\Lambda^{N,-i},\beta]=\frac{1}{N}\sum_{j=1}^N\delta_{X^j[\Lambda^{N,-i},\beta]} \in \contpdue.
\end{equation}
Let us denote $X=X[\Lambda]=X[\Lambda^{N,-i},\Lambda^{i}]$ the state process of the $N$ players when every player $i=1,\dots,N$ follows the recommendation $\Lambda^{i}$.
Then, let us consider the following auxiliary processes: let $\tonde{Z^j[\Lambda^{N,-i},\beta]}_{j = 1}^N$ be the solution of
\begin{equation*}
    \begin{cases}
    dZ^j_t=b(t,Z^j_t,\mu_t,\lambda^{j}_t)dt + dW^j_t, \quad Z^j_0=\xi^j, \quad j \neq i, \\
    dZ^i_t=b(t,Z^i_t,\mu_t,\beta_t)dt + dW^i_t, \quad Z^i_0=\xi^i
    \end{cases}
\end{equation*}
and $\nu^N[\Lambda^{-i},\beta]$ be the empirical measure of the processes $Z[\Lambda^{-i},\beta]$:
\begin{equation*}
    \nu^N[\Lambda^{-i},\beta]=\frac{1}{N}\sum_{j=1}^N\delta_{Z^j[\Lambda^{N,-i},\beta]} \in \contpdue.
\end{equation*}

\begin{lemma}\label{lemma_poc}
Let $\beta$ be either an open-loop strategy in $\A_{K}$ for some $K \geq 2$, or be equal to $\lambda^{i}$, the strategy associated to the admissible recommendation $\Lambda^i$ to player $i$.
It holds:
\begin{align}
    & \sup_{t \in [0,T]}\attesa{\pwassmetric{2}{\R^d}{2}\tonde{\mu^N_t[\Lambda^{-i},\beta],\mu_t}} \overset{N \to \infty}{\longrightarrow} 0,  \label{lemma_poc:conv_wasserstein} \\
    & \max_{1 \leq j \leq N} \attesa{ \norm{X^{j,N}[\Lambda^{-i},\beta]-Z^j[\Lambda^{-i},\beta]}^2_{\contrd}} \overset{N \to \infty}{\longrightarrow} 0, \label{lemma_poc:conv_norma} \\
    & \sup_{N \geq 2} \max_{1 \leq j \leq N}\attesa{\norm{X^{j,N}[\Lambda^{-i},\beta]}^2_{\contrd} + \norm{Z^j[\Lambda^{-i},\beta]}^2_{\contrd}} < \infty. \label{lemma_poc:finitezza_momenti}
\end{align}
\end{lemma}
\begin{proof}
Because of the symmetry properties of the systems of SDEs, we can suppose $i=1$.
Throughout the proof, to make notation as simple as possible, we omit the dependence upon $[\Lambda^{-1},\beta]$.
For the same reason, define, for each $j \geq 1$, the following process $\gamma^j$:
\begin{equation*}
    \gamma^j_t=\begin{cases}
    \beta_t \qquad j=1, \\
    \lambda^j_t \qquad j \geq 2.
    \end{cases}
\end{equation*}
Obviously, in the case that $\beta$ is $\lambda^1$, we have $\gamma^j \equiv \lambda^j$ for every $j$.
Moreover, let us introduce the following auxiliary processes: let $(Y^j)_{j \geq 1}$ be the solution of
\begin{equation*}
    dY^j_t=b(t,Y^j_t,\mu_t,\lambda^j_t)dt + dW^j_t, \quad Y^j_0=\xi^j.
\end{equation*}
Let $\eta^N$ be the empirical measure of the processes $Y^j$:
\begin{equation*}
    \eta^N=\frac{1}{N}\sum_{j=1}^N\delta_{Y^j} \in \contpdue.
\end{equation*}
Denote by $X^*$ the state process resulting from the coarse correlated solution of the MFG, i.e.
\begin{equation*}
    dX^*_t=b\tonde{t,X^*_t,\mu^*_t,\lambda^*_t}dt + dW^*_t, \quad X^*_0=\xi^*.
\end{equation*}
Since, for every $j \geq 1$, $(\xi^j,W^j,\mu,\lambda^j)$ are distributed as $(\xi^*,W^*,\mu^*,\lambda^*)$, by Theorem \ref{teorema_di_unicita_legge} the processes $(Y^j)_{j \geq 1}$ are identically distributed copies of $X^*$; moreover, the joint distribution of $(Y^j,\mu)$ under $\prob$ is the same of $(X^*,\mu^*)$ under $\prob^*$, which, by marginalizing at every time $t \in [0,T]$, implies that $Y^j$ satisfies the consistency condition \eqref{def_mean_field_sol:cons} as well.

\smallskip
For every fixed $t \in [0,T]$, by the triangular inequality, it holds
\begin{equation}\label{lemma_poc:intermedio1}
\begin{aligned}
    \E\quadre{\pwassmetric{2}{\R^d}{2}(\mu^N_t,\mu_t)} & \leq C \E \quadre{ \pwassmetric{2}{\R^d}{2}\tonde{\mu^N_t,\nu^N_t} + \pwassmetric{2}{\R^d}{2}\tonde{\nu^N_t,\eta^N_t} + \pwassmetric{2}{\R^d}{2}\tonde{\eta^N_t,\mu_t}}.
\end{aligned}
\end{equation}
We start from the third term in \eqref{lemma_poc:intermedio1}: let $\prob^m$ be a version of the regular conditional probability of $\prob$ given $\mu=m$, and denote by $\E^m[\cdot]$ the expectation with respect to the measure $\prob^m$.
By construction, the strategies $(\lambda^j)_{j\geq 1}$ associated to the admissible recommendations $(\Lambda^j)_{j \geq 1}$ are i.i.d. under $\prob^m$, for $\rho$-a.e. $m \in \contpdue$.
Since $\mu$ is independent of $(W^j)_{j \geq 1}$ and $(\xi^j)_{j \geq 1}$ under $\prob$, the processes $(Y^j)_{j \geq 1}$ are independent under $\prob^m$.
Moreover, since $\mu_t(\cdot)=\prob( Y^j_t \in \cdot \; \vert \; \mu )$ $\prob$-a.s. for every $t \in [0,T]$ and $\mu_t=m_t$ $\prob^m$-a.s. for $\rho$-a.e. $m \in \contpdue$, we have
\begin{equation}\label{lemma_poc:consistency_disintegrate}
    m_t=\prob^m \circ (Y^j_t)^{-1}, \quad \rho\text{-a.e.}, \; \forall t \in [0,T],
\end{equation}
for every $j \geq 1$.
We can conclude that the processes $(Y^j_t)_{j \geq 1}$ are independent and identically distributed square integrable random variables with law $m_t$ under $\prob^m$, for every $t$ and for $\rho$-a.e. $m$.
Therefore, as ensured, e.g., by \cite[(5.19)]{librone_vol1}, it holds
\begin{equation*}
    \lim_{N \to \infty} \E^m\quadre{\pwassmetric{2}{\R^d}{2}\tonde{\eta^N_t,\mu_t}} = 0, \quad \rho\text{-a.e.}, \; \forall t \in [0,T].
\end{equation*}
We observe that there exists a function $g: \contpdue \to \R$, $g \in L^1(\rho)$, which is bigger or equal than $\E^m[\pwassmetric{2}{\R^d}{2}(\mu^N_t,\mu_t) ]$, $\rho$-a.e., for every $t$: indeed, since, under $\prob^m$, $Y^j_t$ are i.i.d with law $m_t$ and $\mu_t=m_t$ a.s., we have 
\begin{equation*}
\begin{aligned}
    \E^m & \quadre{\pwassmetric{2}{\R^d}{2}\tonde{\eta^N_t,\mu_t}} \leq 2 \E^m\quadre{\pwassmetric{2}{\R^d}{2}\tonde{\eta^N_t,\delta_0} + \pwassmetric{2}{\R^d}{2}\tonde{\delta_0,\mu_t}} \\
    & \leq 2 \tonde{\frac{1}{N}\sum_{k=1}^N\E^m\quadre{\abs{Y^k_t}^2} + \int_{\R^d}\abs{y}^2m_t(dy)}  \leq 2 \tonde{\frac{1}{N}\sum_{k=1}^N\E^m\quadre{\abs{Y^1_t}^2} + \E^m\quadre{\abs{Y^1_t}^2}} \\
    & \leq 4 \E^m\quadre{\abs{Y^1_t}^2} \leq 4 \E^m\quadre{\norm{Y^1}^2_{\contrd}}.
\end{aligned}
\end{equation*}
The function $g(m)=\E^m\quadre{\lVert Y^1 \rVert_{\contrd}}$ belongs to $L^1(\rho)$, since
\begin{equation}\label{lemma_poc:bound_uniforme}
    \int_{\contpdue} g(m)\rho(dm) = \E\quadre{ \E\quadre{\norm{Y^1}^2_{\contrd}\; \vert \; \mu}}= \E\quadre{\norm{Y^1}^2_{\contrd}} < \infty.
\end{equation}
Therefore, by dominated convergence theorem, we have 
\begin{equation}\label{lemma_poc:convergenza_wass_puntuale}
    \lim_{N \to \infty} \E\quadre{\pwassmetric{2}{\R^d}{2}\tonde{\eta^N_t,\mu_t}} = 0
\end{equation}
for every $t \in [0,T]$.
The convergence in \eqref{lemma_poc:convergenza_wass_puntuale} is actually uniform in time.
Indeed, fix $t,s \in [0,T]$: then
\begin{equation*}
\begin{aligned}
    \E & \quadre{\pwassmetric{2}{\R^d}{2}\tonde{\eta^N_t,\mu_t} - \pwassmetric{2}{\R^d}{2}\tonde{\eta^N_s,\mu_s}} \\
    & = \E \quadre{\tonde{\pwassmetric{2}{\R^d}{}\tonde{\eta^N_t,\mu_t} - \pwassmetric{2}{\R^d}{}\tonde{\eta^N_s,\mu_s}}\tonde{\pwassmetric{2}{\R^d}{}\tonde{\eta^N_t,\mu_t} + \pwassmetric{2}{\R^d}{}\tonde{\eta^N_s,\mu_s}}} \\
    & \leq C \E\quadre{\norm{Y^1}^2_{\contrd}}^\frac{1}{2} \E \quadre{\abs{\pwassmetric{2}{\R^d}{}\tonde{\eta^N_t,\mu_t} - \pwassmetric{2}{\R^d}{}\tonde{\eta^N_s,\mu_s}}^2}^{\frac{1}{2}},
\end{aligned}
\end{equation*}
where we used Cauchy-Schwartz inequality together with the uniform in time bound given by \eqref{lemma_poc:bound_uniforme}.
By triangulating with $\eta^N_s$ and $\mu_s$, we get
\begin{align*}
\E & \quadre{\big \vert \pwassmetric{2}{\R^d}{}\tonde{\eta^N_t,\mu_t} - \pwassmetric{2}{\R^d}{}\tonde{\eta^N_s,\mu_s} \big\vert^2 } \leq \E\Big[ \big \vert\pwassmetric{2}{\R^d}{}\tonde{\eta^N_t,\eta^N_s} + \pwassmetric{2}{\R^d}{}\tonde{\eta^N_s,\mu_s} \\
& + \pwassmetric{2}{\R^d}{}\tonde{\mu_s,\mu_t} - \pwassmetric{2}{\R^d}{}\tonde{\eta^N_s,\mu_s} \big\vert^2 \Big] \leq \E\quadre{\big \vert\pwassmetric{2}{\R^d}{}\tonde{\eta^N_t,\eta^N_s} - \pwassmetric{2}{\R^d}{}\tonde{\eta^N_s,\mu_s} \big\vert^2 } \\
\leq & C \tonde{\E \quadre{\pwassmetric{2}{\R^d}{2}\tonde{\eta^N_t,\eta^N_s}} + \E\quadre{\pwassmetric{2}{\R^d}{2}\tonde{\mu_t,\mu_s}}} \\
\leq & C \tonde{\E\quadre{\frac{1}{N}\sum_{k=1}^N \abs{Z^k_t - Z^k_s}^2} + \E \quadre{ \E\quadre{\pwassmetric{2}{\R^d}{2}\tonde{\mu_t,\mu_s}\; \vert \; \mu }} } \\
\leq & C \tonde{\E\quadre{\frac{1}{N}\sum_{k=1}^N \abs{Z^k_t - Z^k_s}^2} + \E\quadre{\abs{Y^1_t - Y^1_s}^2} },
\end{align*}
where in the last inequality we used \eqref{lemma_poc:consistency_disintegrate} and tower property.
By using Lipschitz continuity of $b$, the triangular inequality and $\E[\lVert Y^1 \rVert_{\contrd}]<\infty$, it is straightforward to see that $\E [ \lVert Z^k \rVert_{\contrd} ] \leq C$ for every $k \geq 1$, for some positive constant $C$ independent of $k$.
By the same arguments, we have 
\begin{equation*}
\begin{aligned}
    \E \quadre{\abs{Z^k_t - Z^k_s}^2 } & \leq C \E\quadre{\int_s^t \abs{ b(u,Z^k_u,\mu_u,\gamma^k_u) }^2 du } \\
    & \leq C \E\quadre{\int_s^t \tonde{ 1 + \abs{Z^k_u}^2 + \int_{\R^d}\abs{y}^2\mu_u(dy) + \abs{\gamma^k_u}^2} du }\\
    & \leq C \E\quadre{\int_s^t \tonde{ 1 + \norm{Z^k}^2_{\contrd} + \norm{Y^1}^2_{\contrd} + \abs{\gamma^k_u}^2} du } \leq C\abs{t-s},
\end{aligned}
\end{equation*}
where the constant $C$ depends upon $T$, $b$, $\E[\lVert Y^1 \rVert ^2 _{\contrd}] < \infty$ and $\text{diam}(A)$, which is a finite quantity since the set $A$ of actions is compact by Assumptions \ref{standing_assumptions}.
Analogously holds for $\E[\vert Y^1_t - Y^1_s \vert ]$, which implies that 
\begin{equation}\label{lemma_poc:convergenza_uniforme}
\begin{aligned}
    \abs{\E \quadre{\pwassmetric{2}{\R^d}{2}\tonde{\eta^N_t,\mu_t} - \pwassmetric{2}{\R^d}{2}\tonde{\eta^N_s,\mu_s}}} \leq C \abs{t-s}^\frac{1}{2}.
\end{aligned}
\end{equation}
This is enough to conclude, by Arzel\`a-Ascoli theorem, that the convergence in \eqref{lemma_poc:convergenza_wass_puntuale} is uniform in time.

\smallskip
Remind from Section \ref{sezione_notations_assumptions} that $\norm{x}_{t,\contrd}=\sup_{s \in [0,t]}\abs{x_s}$, $t \in [0,T]$.
To handle the second term in \eqref{lemma_poc:intermedio1}, we use the coupling of $\nu^N$ and $\eta^N$ given by $\frac{1}{N}\sum_{k=1}^N\delta_{(Z^k,Y^k)}$, together with the Lipschitz continuity of $b$:
\begin{equation*}
\begin{aligned}
    \E\quadre{\norm{Z^k-Y^k}^2_{t,\contrd}} & =\E\quadre{\sup_{0 \leq s \leq t} \tonde{\int_0^s \tonde{b\tonde{u,Z^k_u,\mu_u,\gamma^k_u} - b\tonde{u,Y^k_u,\mu_u,\lambda^k_u}}du}^2} \\
    & \leq C \left( \int_0^t \E\quadre{ \sup_{0 \leq u \leq s} \abs{Z^k_u - Y^k_u }^2}ds +  \int_0^t \E\quadre{ \abs{\lambda^k_s - \gamma^k_s}^2} ds \right).
\end{aligned}
\end{equation*}
By definition of $(\gamma^k)_{k \geq 1}$, we have
\begin{equation*}
    \int_0^T \E\quadre{\abs{\lambda^k_u-\gamma^k_u}^2}du=\begin{cases}
    & \int_0^T \E\quadre{\abs{\lambda^1_u-\beta_u}^2}du \qquad k = 1,\\
    & 0 \qquad \qquad \qquad \qquad  \qquad \quad k \geq 2.
    \end{cases}
\end{equation*}
Therefore, by Gronwall's lemma, we sum over $k=1,\dots,N$ to obtain the estimate
\begin{equation}\label{stime_poc_controlli}
\begin{aligned}
    \sup_{t \in [0,T]} & \E\quadre{\pwassmetric{2}{\R^d}{2}\tonde{\eta^N_t,\nu^N_t}} \leq \E\quadre{\pwassmetric{2}{\contrd}{2}\tonde{\eta^N,\nu^N}} \leq \frac{C}{N} \sum_{k=1}^N \E\quadre{\norm{Y^k - Z^k}_{\contrd}^2} \\
    & \leq \frac{C}{N} \sum_{j=1}^N \int_0^T \E\quadre{\abs{\lambda^j_u-\gamma^j_u}^2}du = \frac{C}{N} \int_0^T \E\quadre{\abs{\lambda^1_u-\beta_u}^2}du  \leq \frac{C}{N} \overset{N \to \infty}{\longrightarrow} 0,
\end{aligned}
\end{equation}
where the constant $C$ depends only upon $T$, $b$ and $\text{diam}(A)$.

\smallskip
Finally, for the first term of \eqref{lemma_poc:intermedio1}, we use the coupling of $\mu^N_t$ and $\nu^N_t$ given by $\frac{1}{N}\sum_{k=1}^N\delta_{(X^{k,N}_t,Z^k_t)}$, together with the Lipschitz continuity of $b$:
\begin{equation*}
\begin{aligned}
    \E & \quadre{\norm{X^{k,N}-Z^k}^2_{t,\contrd}} \leq C \int_0^t \left( \E\quadre{ \sup_{0 \leq u \leq s} \abs{X^{k,N}_u - Z^k_u}^2} + \E\quadre{ \pwassmetric{2}{\R^d}{2}(\mu^N_s,\mu_s) } \right) ds \\
    & \leq C  \int_0^t \left( \E\quadre{ \sup_{0 \leq u \leq s} \abs{X^{k,N}_u - Z^k_u}^2} + \E\quadre{ \pwassmetric{2}{\R^d}{2}(\mu^N_s,\nu^N_s)} + \E\quadre{ \pwassmetric{2}{\R^d}{2}(\nu^N_s,\mu_s) } \right) ds \\
    & \leq C \int_0^t \left( \E\quadre{ \sup_{0 \leq u \leq s} \abs{X^{k,N}_u - Z^k_u}^2} + \frac{1}{N}\sum_{j=1}^N \E\quadre{ \abs{X^{j,N}_s - Z^j_s}^2 } + \E\quadre{ \pwassmetric{2}{\R^d}{2}(\nu^N_s,\mu_s) }\right) ds \\
    & \leq  C \left( \int_0^t \max_{k=1,\dots,N} \E\quadre{ \sup_{0 \leq u \leq s} \abs{X^{k,N}_u - Z^k_u}^2} ds + \sup_{s \in [0,t]}\E\quadre{ \pwassmetric{2}{\R^d}{2}(\nu^N_s,\mu_s) }\right).
\end{aligned}
\end{equation*}
By taking the maximum over $k=1,\dots,N$ on the left-hand side and applying Gronwall's lemma, we get
\begin{equation*}
    \max_{k=1,\dots,N} \E \quadre{\norm{X^{k,N}-Z^k}^2_{\contrd}} \leq C \sup_{t \in [0,T]}\E\quadre{ \pwassmetric{2}{\R^d}{2}(\nu^N_t,\mu_t) } \to 0,
\end{equation*}
by \eqref{lemma_poc:convergenza_wass_puntuale}, \eqref{lemma_poc:convergenza_uniforme} and \eqref{stime_poc_controlli}, which proves \eqref{lemma_poc:conv_norma}.
Coming back to \eqref{lemma_poc:conv_wasserstein}, we have
\begin{multline*}
    \sup_{t \in [0,T]} \E\quadre{ \pwassmetric{2}{\R^d}{2}\tonde{\mu^N_t,\nu^N_t} } \leq \sup_{ t \in [0,T] } \frac{1}{N} \sum_{k=1}^N \E\quadre{\abs{X^{k,N}_t - Z^k_t }^2} \\
    \leq \max_{k=1,\dots,N}\E\quadre{ \norm{X^{k,N} - Z^k}_{\contrd}^2} \to 0.
\end{multline*}
This estimate together with estimates \eqref{lemma_poc:convergenza_wass_puntuale}, \eqref{lemma_poc:convergenza_uniforme} and \eqref{stime_poc_controlli} implies \eqref{lemma_poc:intermedio1} and therefore \eqref{lemma_poc:conv_wasserstein}.
Finally, \eqref{lemma_poc:finitezza_momenti} follows from the above calculations.
\end{proof}

\section{Auxiliary results for the existence of mean field CCE}\label{appendix_existence}

\subsection{ Proof of Theorem \ref{existence:thm_esistenza_strategia_ottima}.}
\label{existence:sezione_opt_strat_auxiliary}
The main instrument is the following Minimax Theorem, due to K. Fan:
\begin{theorem}[ \cite{fan1953minimax}, Theorem 2 ] \label{thm_minimax}
Let $X$ be a compact Hausdorff space and $Y$ an arbitrary set (not topologized). Let $f:X\times Y\to \R$ be a real-valued function such that, for every $y \in Y$, $x \mapsto f(x,y)$ is lower semi-continuous on $X$.
If $f(\cdot,y)$ is concave on $X$ for every $y \in Y$ and $f(x,\cdot)$ convex on $Y$ for every $x \in X$, then
\begin{equation}
    \max_{x \in X} \inf_{y \in Y} f(x,y) = \inf_{y \in Y} \max_{x \in X} f(x,y).
\end{equation}
\end{theorem}

The following results aims at verifying that the auxiliary zero-sum game in Definition \ref{existence:def_zerosum} satisfies the assumptions of Theorem \ref{thm_minimax}.
We start with some useful moment estimates for the solution to \eqref{existence:eq_processo_K}:
\begin{lemma}[Estimates]\label{lemma_stima_a_priori}
Let $\Gamma \in \mathcal{K}$, let $\mathfrak{U}=((\Omega,\F,\mathbb{F},\prob),\xi,W,\mu,\mathfrak{r})$ be the tuple associated to $\Gamma$, as in Definition \ref{existence:strategie_max}, and let $X$ be the solution to \eqref{existence:eq_processo_K}.
Then, for every $2 \leq p \leq \overline{p}$, there exists a constant $C=C(p,T,\nu,b,A)$ so that
\begin{equation}\label{lemma_stima_a_priori:stima}
    \attesa{\norm{X}_{\contrd}^p}\leq C.
\end{equation}
\end{lemma}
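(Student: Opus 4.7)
My plan is to run a standard Grönwall-type moment estimate for the SDE \eqref{existence:eq_processo_K}, with the twist that the consistency property $\mu_t(\cdot)=\prob(X_t\in\cdot\mid\mu)$ must be used to reabsorb the measure-dependent part of the drift into the moment of $X$ itself.

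First, I would bound the drift coefficient pointwise. By Assumptions (A.4) and (A.5), together with the compactness of $A$,
\begin{equation*}
\abs{b(t,x,m,a)} \leq \abs{b(t,0,\delta_0,a_0)} + L\tonde{\abs{x} + \pwassmetric{2}{\R^d}{}(m,\delta_0) + \abs{a-a_0}} \leq C_0\tonde{1 + \abs{x} + \pwassmetric{2}{\R^d}{}(m,\delta_0)},
\end{equation*}
for a constant $C_0$ depending only on $b$, $L$ and $A$. Integrating against $\mathfrak{r}_t$ preserves this bound, and we note that $\pwassmetric{2}{\R^d}{2}(\mu_s,\delta_0)=\int_{\R^d}\abs{y}^2\mu_s(dy)$.

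Second, from the integral form of \eqref{existence:eq_processo_K}, Hölder on the time integral, and the elementary inequality $(a_1+a_2+a_3)^p\leq 3^{p-1}(a_1^p+a_2^p+a_3^p)$, I obtain for any $t\in[0,T]$,
\begin{equation*}
\norm{X}_{t,\contrd}^p \leq C_p\tonde{\abs{\xi}^p + T^{p-1}\int_0^t \tonde{1+\abs{X_s}^p + \pwassmetric{2}{\R^d}{p}(\mu_s,\delta_0)}ds + \norm{W}_{t,\contrd}^p}.
\end{equation*}
To control the measure term, I use consistency: $\pwassmetric{2}{\R^d}{2}(\mu_s,\delta_0)=\E[\abs{X_s}^2\mid \mu]$, and since $p/2\geq 1$, Jensen's inequality gives
\begin{equation*}
\pwassmetric{2}{\R^d}{p}(\mu_s,\delta_0) = \E\quadre{\abs{X_s}^2\mid \mu}^{p/2} \leq \E\quadre{\abs{X_s}^p\mid \mu},
\end{equation*}
so that $\E[\pwassmetric{2}{\R^d}{p}(\mu_s,\delta_0)]\leq \E[\abs{X_s}^p]\leq \E[\norm{X}_{s,\contrd}^p]$. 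The Brownian term is bounded by standard moment estimates, $\E[\norm{W}_{t,\contrd}^p]\leq C_p t^{p/2}$, and $\E[\abs{\xi}^p]\leq \int\abs{x}^{\overline{p}}\nu(dx)+1<\infty$ by (A.2) for any $p\leq\overline{p}$.

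Putting this together yields a Grönwall-ready inequality
\begin{equation*}
\E\quadre{\norm{X}_{t,\contrd}^p} \leq K_1 + K_2\int_0^t \E\quadre{\norm{X}_{s,\contrd}^p}ds,
\end{equation*}
with $K_1,K_2$ depending only on $p,T,\nu,b,A$. The main (minor) obstacle is that Grönwall's lemma requires the quantity $\E[\norm{X}_{t,\contrd}^p]$ to be known a priori finite. I would resolve this with a localization argument: apply the whole chain to the stopped process $X^{\tau_n}$ with $\tau_n=\inf\{t:\abs{X_t}\geq n\}\wedge T$, noting that $\E[\norm{X^{\tau_n}}_{t,\contrd}^p]\leq n^p<\infty$ and that all constants are independent of $n$, then pass to the limit $n\to\infty$ by monotone convergence, using that pathwise existence for \eqref{existence:eq_processo_K} (under Assumptions \ref{standing_assumptions}) ensures $\tau_n\uparrow T$ almost surely.
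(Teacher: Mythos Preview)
Your approach is exactly what the paper intends (the paper omits the proof as a ``straightforward application of Gr\"onwall's lemma''), and the use of consistency together with Jensen to reabsorb $\E[\pwassmetric{2}{\R^d}{p}(\mu_s,\delta_0)]$ into $\E[|X_s|^p]$ is the right idea; the paper uses the same trick in the proof of Lemma~\ref{existence:lemma_tightness}.

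There is, however, a genuine gap in your localization step. When you stop $X$ at $\tau_n$, the bound on the measure term still reads $\E[\pwassmetric{2}{\R^d}{p}(\mu_s,\delta_0)\1_{\{s\leq\tau_n\}}]\leq \E[\E[|X_s|^p\mid\mu]\1_{\{s\leq\tau_n\}}]$, and since $\1_{\{s\leq\tau_n\}}$ is \emph{not} $\sigma(\mu)$-measurable, you cannot pull it inside the conditional expectation to obtain $\E[|X_{s\wedge\tau_n}|^p]$. The best you get is $\E[|X_s|^p]$ for the unstopped process, so the Gr\"onwall loop for $u_n(t)=\E[\lVert X^{\tau_n}\rVert_{t,\contrd}^p]$ does not close.

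The fix is to localize on a $\sigma(\mu)$-measurable event instead: set $A_k=\{\sup_{t\in[0,T]}\pwassmetric{2}{\R^d}{}(\mu_t,\delta_0)\leq k\}$, which is $\F_0$-measurable with $\prob(A_k)\uparrow 1$. On $A_k$ the drift has linear growth with explicit constant $k$, so standard Lipschitz SDE estimates give $\phi_k(t):=\E[\lVert X\rVert_{t,\contrd}^p\1_{A_k}]<\infty$. Now your chain, multiplied by $\1_{A_k}$, yields
\[
\phi_k(t)\leq K_1 + K_2\int_0^t\bigl(\phi_k(s)+\E[\pwassmetric{2}{\R^d}{p}(\mu_s,\delta_0)\1_{A_k}]\bigr)ds,
\]
and since $\1_{A_k}$ is $\sigma(\mu)$-measurable one has $\E[\pwassmetric{2}{\R^d}{p}(\mu_s,\delta_0)\1_{A_k}]\leq \E[\E[|X_s|^p\mid\mu]\1_{A_k}]=\E[|X_s|^p\1_{A_k}]\leq\phi_k(s)$. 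Gr\"onwall then gives $\phi_k(T)\leq K_1 e^{2K_2 T}$ with constants independent of $k$, and monotone convergence concludes.
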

The proof is omitted as it is just a straightforward application of Gronwall's lemma.
We recall the following fact, which will be used extensively and whose proof can be found in \cite[Theorem 7.12]{villani2003}: given a metric space $(E,d_E)$,  a sequence $(\mu_n)_n \subset (\mathcal{P}^p(E),\pwassmetric{p}{E}{})$ is relatively compact if and only if it is tight and satisfies
\begin{equation}\label{wass:uniforme_integrabilita}
    \lim_{r \to \infty}\sup_n\int_{\insieme{x:\;d_E^p(x,x_0)\geq r}}d_E^p(x,x_0)\mu_n(dx)=0.
\end{equation}
\begin{lemma}\label{existence:lemma_tightness}
$\mathcal{K}$ is pre-compact in $(\mathcal{P}^2(\contrd \times \mathcal{V}\times \contpdue),\pwassmetric{2}{\contrd \times \mathcal{V}\times \contpdue}{})$.
\end{lemma}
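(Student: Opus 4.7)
The plan is to invoke Proposition~\ref{wass:equivalenze_convergenza}, which reduces pre-compactness in the $2$-Wasserstein space on $\contrd \times \mathcal{V} \times \contpdue$ to two ingredients: weak tightness of $\mathcal{K}$, and the uniform second-moment integrability condition \eqref{wass:uniforme_integrabilita} around a fixed base point, say $(0, q_0, \delta_0)$ where $\delta_0 \in \contpdue$ denotes the constant path. I would verify both properties coordinate by coordinate, since tightness of the three marginals yields tightness of the joints and the product metric decomposes as a sum.

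For \emph{tightness}, the $\mathcal{V}$-marginal is free because $\mathcal{V}$ itself is compact. For the $\contrd$-marginal, the drift $\int_A b(\cdot, X_\cdot, \mu_\cdot, a)\,\mathfrak{r}_\cdot(da)$ has linear growth in $(x,m)$ by A.4--A.5; combined with Lemma~\ref{lemma_stima_a_priori} and the fact that $\overline{p}>4$, the SDE \eqref{existence:eq_processo_K} delivers $\E[|X_t - X_s|^p] \leq C|t-s|^{p/2}$ for any $p \in (2,\overline{p}]$, and Kolmogorov--Centsov yields equicontinuity of the path laws, hence tightness. For the $\contpdue$-marginal, the central step uses the consistency condition $\mu_t(\cdot) = \prob(X_t \in \cdot \mid \mu)$: coupling $\mu_t$ and $\mu_s$ through the conditional joint law of $(X_s, X_t)$ given $\mu$ gives
\begin{equation*}
\pwassmetric{2}{\R^d}{2}(\mu_t, \mu_s) \leq \E\!\left[|X_t - X_s|^2 \,\big|\, \mu\right],
\end{equation*}
and conditional Jensen then yields $\pwassmetric{2}{\R^d}{p}(\mu_t, \mu_s) \leq \E[|X_t - X_s|^p \mid \mu]$ for $p \geq 2$. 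Taking expectations, $\E[\pwassmetric{2}{\R^d}{p}(\mu_t, \mu_s)] \leq C|t-s|^{p/2}$ uniformly in $\Gamma \in \mathcal{K}$ for some $p>2$. Since $\xi$ is independent of $\mu$, one has $\mu_0 = \nu$ deterministically, so a further application of Kolmogorov--Centsov (now in the Polish space $\mathcal{P}^2(\R^d)$) produces a uniform Hölder modulus and tightness in $\contpdue$.

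For the \emph{uniform integrability} condition \eqref{wass:uniforme_integrabilita}, the $\mathcal{V}$-factor is trivial by compactness, and for the $\contrd$-factor Lemma~\ref{lemma_stima_a_priori} supplies $\sup_{\Gamma \in \mathcal{K}} \E[\|X\|_{\contrd}^p] \leq C$ for any $p \in [2,\overline{p}]$; taking $p \in (2,\overline{p}]$ and applying Markov's inequality gives \eqref{wass:uniforme_integrabilita} on $\contrd$. For the $\contpdue$-factor, the same conditional-Jensen argument applied to the coupling with the Dirac mass at $0$ yields
\begin{equation*}
\sup_{t \in [0,T]} \pwassmetric{2}{\R^d}{p}(\mu_t, \delta_0) \leq \E\!\left[\|X\|_{\contrd}^p \,\big|\, \mu\right], \qquad p \in [2,\overline{p}],
\end{equation*}
so that $\E[\sup_t \pwassmetric{2}{\R^d}{p}(\mu_t, \delta_0)] \leq \E[\|X\|_{\contrd}^p] \leq C$; choosing $p>2$ concludes the uniform integrability in $\contpdue$ as well.

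The main obstacle is the Hölder-in-time bound on the $\contpdue$-marginal, which genuinely requires both the consistency condition from Definition~\ref{existence:strategie_max} (to transfer estimates from $X$ to $\mu$) and the margin $\overline{p}>4$ in Assumption A.2 (to secure a strictly positive Hölder exponent when invoking Kolmogorov--Centsov in the Wasserstein metric). Once that modulus is in place, everything else is routine moment bookkeeping.
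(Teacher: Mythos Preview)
Your proposal is correct and follows the same overall architecture as the paper's proof: reduce to marginal tightness plus uniform integrability via Proposition~\ref{wass:equivalenze_convergenza}, handle $\mathcal{V}$ by compactness, use Kolmogorov--\v{C}entsov on increments of $X$ for the $\contrd$-marginal, and transfer those increment bounds to $\mu$ through the consistency condition for the $\contpdue$-marginal.

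The one place where you diverge from the paper is the uniform integrability check for the $\contpdue$-marginal. The paper first constructs a continuous modification of $t \mapsto \E[|X_t|^2 \mid \mu]$ via a separate Kolmogorov argument (which is where the paper actually uses $\overline{p}>4$, since it needs $2p$-moments of $X$ with $p>2$), and only then passes the supremum inside. Your route is more direct: you bound $\sup_t \pwassmetric{2}{\R^d}{2}(\mu_t,\delta_0) \leq \E[\|X\|_{\contrd}^2 \mid \mu]$ by working over a countable dense set of times and using that $t \mapsto \int |y|^2 \mu_t(dy)$ is already continuous because $\mu \in \contpdue$, then apply conditional Jensen. This is cleaner and in fact only requires $\overline{p}>2$, not $\overline{p}>4$. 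Your closing remark that $\overline{p}>4$ is ``genuinely required'' for the Kolmogorov step on $\mu$ is therefore slightly off: in your own argument $\overline{p}>2$ suffices throughout, and the stronger assumption is an artifact of the paper's modification detour (and is used elsewhere in the paper). You should also make explicit the countable-dense-set-plus-continuity step when passing $\sup_t$ across the conditional expectation, since each fixed-$t$ identity holds only almost surely.
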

\begin{proof}
Let $(\Gamma^n)_{n \geq 1}$ be a sequence in $\mathcal{K}$, let us show that it is pre-compact, which is equivalent to show that $(\Gamma^n)_{n \geq 1}$ is tight and condition \eqref{wass:uniforme_integrabilita} is satisfied. Moreover, by \cite[Lemma A.2]{lacker2015martingale}, relative compactness of the sequence $(\Gamma^n)_{n \geq 1}$ is equivalent to the relative compactness of each sequence of marginals on $\contrd$, $\contpdue$ and $\mathcal{V}$.

Since $A$ is compact by Assumptions \ref{standing_assumptions}, the space $\mathcal{V}$ is compact as well.
Then, we automatically get both tightness of the sequence of the marginals on $\mathcal{V}$ of $(\Gamma^n)_{n\geq 1}$ and property \eqref{wass:uniforme_integrabilita}.

In the following, for every $n\geq 1$, let $\mathfrak{U}^n=((\Omega^n,\F^n,\mathbb{F}^n,\prob^n),\xi^n,W^n,\mu^n,\mathfrak{r}^n)$ and $X^n$ be as in Definition \ref{existence:strategie_max}, so that $\Gamma^n=\prob^n\circ(X^n,\mu^n,\mathfrak{r}^n)^{-1}$.
Let $\Gamma^n_1$ be the law of $X^n$ under $\prob^n$.
We prove the tightness by means of Kolmogorov-\v{C}entsov criterion, as stated, e.g., in \cite[Corollary 16.9]{kallenberg_foundations}.
Let $2< p \leq \overline{p}$, $0\leq s < t \leq T$. We have:
\begin{equation*}
\begin{aligned}
    \E^n&\quadre{\abs{X^n_t - X^n_s}^p} \leq C\E^n\quadre{\int_s^t \int_A \abs{b(u,X^n_u,\mu^n_u,a)}^p \mathfrak{r}^n_u(da)du + \abs{W_t - W_s}^p} \\
    & \leq C\tonde{\abs{t-s}^{p-1}\int_s^t \E^n\quadre{ \int_A \abs{b(u,X^n_u,\mu^n_u,a)}^p \mathfrak{r}^n_u(da)}du + \abs{t-s}^\frac{p}{2}},
\end{aligned}
\end{equation*}
for some positive constant $C$ which is updated from line to line.
For every $u \in [0,T]$, we have
\begin{equation}\label{lemma_tightness:bound_uniforme}
\begin{aligned}
    \E^n&\quadre{ \int_A \abs{b(u,X^n_u,\mu^n_u,a)}^p \mathfrak{r}^n_u(da)} \\
    & \leq C\E^n\quadre{\abs{X^n_u}^p + \tonde{\int_{\R^d}\abs{y}^2 \mu^n_u(dy)}^\frac{p}{2} + \int_A \abs{a-a_0}^p \mathfrak{r}^n_u(da) + \abs{b(u,0,\delta_0,a_0)}^p } \\
    & \leq C\tonde{1 + \E^n\quadre{\abs{X^n_u}^p + \int_{\R^d}\abs{y}^p \mu^n_u(dy) } } = C\tonde{ 1 + \E^n\quadre{\abs{X^n_u}^p +\E^n\quadre{ \abs{X^n_u}^p \big \vert \mu^n }}} \\
    & = C\tonde{ 1 + 2\E^n\quadre{ \abs{X^n_u}^p } } \leq C \tonde{ 1 + \E^n\quadre{\sup_{u \in [0,T]} \abs{X^n_u}^p } } \leq  C,
\end{aligned}
\end{equation}
where the last inequality follows from Lemma \ref{lemma_stima_a_priori}, with $C$ independent of $n \geq 1$.
Such a uniform bound implies that
\begin{equation*}
\begin{aligned}
    \E^n&\quadre{\abs{X^n_t - X^n_s}^p} \leq C\tonde{\abs{t-s}^{p-1}\int_s^t \E^n\quadre{ \int_A \abs{b(u,X^n_u,\mu^n_u,a)}^p \mathfrak{r}^n_u(da)}du + \abs{t-s}^\frac{p}{2}} \\
    & \leq C\tonde{\abs{t-s}^{p-1}\abs{t-s}C(p,T,\nu,b,A) + \abs{t-s}^\frac{p}{2}} \leq C\tonde{\abs{t-s}^p + \abs{t-s}^{\frac{p}{2}}} \leq C \abs{t-s}^{\frac{p}{2}}.
\end{aligned}
\end{equation*}
Set $\beta=\sfrac{p}{2}-1$, so that we get
\begin{equation}\label{existence:lemma_tightness:stima_momenti}
    \E^n\quadre{\abs{X^n_t-X^n_s}^p}\leq C \abs{t-s}^{1+\beta},
\end{equation}
with $p,\beta>0$.
Since $\prob^n\circ(X_0^n)^{-1}= \nu \in \mathcal{P}^p(\R^d)$ for every $n\geq 1$, we have the tightness of the initial laws as well.
This concludes of the proof of the tightness of $(\Gamma^n_1)_{n \geq 1}$.
As for condition \eqref{wass:uniforme_integrabilita}, we have:
\begin{equation*}
    \begin{aligned}
        \lim_{r \to \infty} & \sup_n \int_{\insieme{y: \; \norm{y}_{\contrd}^2 > r}} \norm{y}_{\contrd}^2 \Gamma^n_1(dy)=\lim_{r \to \infty} \sup_n \E^n\quadre{\norm{X^n}_{\contrd}^2\1_{\insieme{\norm{X^n}_{\contrd}^2 > r}}} \\
        & \leq \lim_{r \to \infty} \sup_n \tonde{\E^n\quadre{\norm{X^n}_{\contrd}^{4}}}^\frac{1}{2}\prob^n\tonde{\norm{X^n}_{\contrd}^2 > r}^\frac{1}{2}\leq C \lim_{r \to \infty} \sup_n \prob^n\tonde{\norm{X^n}_{\contrd}^2 > r}^\frac{1}{2}
    \end{aligned}
\end{equation*}
for some positive constant $C$ independent of $n$.
By Markov's inequality and estimate \eqref{lemma_stima_a_priori:stima} again, we get
\begin{equation*}
    \lim_{r \to \infty} \sup_n \int_{\insieme{y: \; \norm{y}_{\contrd}^2 > r}} \norm{y}_{\contrd}^2 \Gamma^n_1(dy) \leq C \lim_{r \to \infty} \sup_n \E^n\quadre{\norm{X^n}_{\contrd}^2}^\frac{1}{2}r^{-\frac{1}{2}}=0.
\end{equation*}

Finally, we turn to the sequence $(\rho^n)_{n \geq 1}$, where $\rho^n=\prob^n\circ(\mu^n)^{-1}$.
Let $\prob^{n,m}(\cdot)=\prob^n( \cdot \; \vert \; \mu = m)$ be the regular conditional distribution of $\prob^n$ given $\mu^n=m$.
Then, $\mu^n_t=m_t$ $\prob^{n,m}$-a.s. and $\prob^{n,m} \circ (X^n_t)^{-1}= m_t$ $\rho$-a.e. for every $t \in [0,T]$, which implies that, for every $s,t \in [0,T]$, we have 
\begin{equation*}
    \E^{n,m}\quadre{\pwassmetric{2}{\R^d}{p}(\mu^n_t,\mu^n_s)}\leq \E^{n,m} \quadre{\abs{ X^n_t - X^n_s}^p }
\end{equation*}
for $\rho$-a.e. $m \in \contpdue$.
Integrating with respect to $\rho$ yields
\begin{equation*}
    \E^{n}\quadre{\pwassmetric{2}{\R^d}{p}(\mu^n_t,\mu^n_s)}\leq \E^{n} \quadre{\abs{ X^n_t - X^n_s}^p } \leq C \abs{t-s}^{1 + \beta}
\end{equation*}
where the last inequality follows from \eqref{existence:lemma_tightness:stima_momenti} with $\beta=\sfrac{p}{2}-1$.
Since $\prob^{n}\circ(\mu^n_0)^{-1}=\delta_{\nu}$, it is enough to apply again Kolmogorov-\v{C}entsov criterion and deduce the tightness of $(\rho^n)_{n \geq 1}$.
Finally, we verify condition \eqref{wass:uniforme_integrabilita}.
To this extent, we note that, for every $n \geq 1$, there exists a continuous modification of the process $(\E^n[\vert X^n_t \vert ^2\; \vert \; \mu^n])_{t \in [0,T]}$, so that it holds
\begin{equation*}
    \sup_{t \in [0,T]} \int_{\R^d} \abs{y}^2 \mu^n_t(dy) = \sup_{t \in [0,T]}\E^n\quadre{\abs{X^n_t}^2 \; \big\vert \; \mu^n_t } \quad \prob^n\text{-a.s.}
\end{equation*}
Indeed, estimate \eqref{existence:lemma_tightness:stima_momenti} implies that the process $(\E^n[\vert X^n_t \vert ^2\; \vert \; \mu^n])_{t \in [0,T]}$ satisfies
\begin{equation*}
\begin{aligned}
    \E^n & \quadre{\abs{    \E^n\quadre{\abs{X^n_t}^2\; \vert \; \mu^n} - \E^n\quadre{\abs{X^n_s}^2\; \vert \; \mu^n}}^p} \leq \E^n \quadre{ \abs{ \abs{X^n_t}^2 - \abs{X^n_s}^2 } ^ p } \\
    & = \E^n \quadre{ \abs{X^n_t - X^n_s}^p \abs{X^n_t + X^n_s}^p } \leq \E^n \quadre{  \abs{X^n_t - X^n_s}^{2p}}^\frac{1}{2} \E^n \quadre{ \abs{X^n_t + X^n_s}^{2p} }^\frac{1}{2} \leq C\abs{t-s}^\frac{p}{2},
\end{aligned}
\end{equation*}
where we have used Cauchy-Schwartz inequality, \eqref{lemma_stima_a_priori:stima} and  \eqref{existence:lemma_tightness:stima_momenti} to bound $\E^n [ \vert X^n_t - X^n_s \vert ^{2p} ]^{\sfrac{1}{2}}$ and $\E^n [ \vert X^n_t + X^n_s \vert ^{2p} ]^{\sfrac{1}{2}}$, respectively.
Therefore, by choosing $2 < p < \sfrac{\overline{p}}{2}$ and $\beta=\sfrac{p}{2}-1$ as above, we deduce from \cite[Theorem 3.3]{kallenberg_foundations}, that there exists a continuous modification of $(\E^n[\vert X^n_t \vert ^2\; \vert \; \mu^n])_{t \in [0,T]}$.
Then, observe that
\begin{equation*}
    \int_{\R^d} \abs{y}^2 \mu^n_t (dy) = \E^n\quadre{\abs{X^n_t}^2\; \vert \; \mu^n} \quad \forall t \in [0,T] \cap \Q, \; \prob^n\text{-a.s.}
\end{equation*}
Since both processes are almost surely continuous, we can take the supremum over every $t \in [0,T]$ to conclude that
\begin{equation}\label{existence:lemma_tightness:stima_sup}
    \sup_{t \in [0,T]}\int_{\R^d} \abs{y}^2 \mu^n_t (dy) = \sup_{t \in [0,T]} \E^n\quadre{\abs{X^n_t}^2\; \vert \; \mu^n} \leq \E^n\quadre{\sup_{t \in [0,T]} \abs{X^n_t}^2  \; \bigg \vert \; \mu^n }  \quad \prob^n\text{-a.s.}
\end{equation}
We are now ready to show that \eqref{wass:uniforme_integrabilita} holds for $(\rho^n)_{n \geq 1}$: by applying \eqref{existence:lemma_tightness:stima_sup} in the first inequality, Cauchy-Schwartz and Markov inequalities, we have
\begin{equation*}
\begin{aligned}
    & \lim_{r \to \infty} \sup_n \int_{\insieme{m: \; \sup_{t \in [0,T]} \int_{\R^d} \abs{y}^2 m_t(dy) > r}} \sup_{t \in [0,T]} \int_{\R^d} \abs{y}^2 m_t(dy) \rho^n(dm) \\
    & \leq \lim_{r \to \infty} \sup_n \E^n\quadre{ \E^n \quadre{ \norm{X}_{\contrd}^2 \; \vert \; \mu^n } \1_{\insieme{\E^n \quadre{ \norm{X}_{\contrd}^2 \; \vert \; \mu^n } > r}}} \\
    & \leq \lim_{r \to \infty} \frac{1}{r^\frac{1}{2}} \sup_n \E^n\quadre{\norm{X^n}_{\contrd}^{4}}^\frac{1}{2}\E^n\quadre{\norm{X^n}_{\contrd}^{2}}^\frac{1}{2}  \leq \lim_{r \to \infty} C r^{-\frac{1}{2}}=0, 
    \end{aligned}
\end{equation*}
since the suprema over $n \geq 1$ are finite by Lemma \ref{lemma_stima_a_priori}.
\end{proof}

\begin{lemma}\label{existence:lemma_closedness}
$\mathcal{K}$ is closed in $(\mathcal{P}^{2}(\contrd \times \mathcal{V}\times \contpdue),\pwassmetric{2}{\contrd \times \mathcal{V}\times \contpdue}{})$.
\end{lemma}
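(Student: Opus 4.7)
The natural approach is Skorokhod representation followed by verification of each defining property of $\mathcal{K}$ in the limit. Let $(\Gamma^n)_{n\geq 1} \subset \mathcal{K}$ with $\Gamma^n \to \Gamma$ in $\mathcal{W}_{2,\contrd\times\mathcal{V}\times\contpdue}$, and let $\mathfrak{U}^n=((\Omega^n,\F^n,\mathbb{F}^n,\prob^n),\xi^n,W^n,\mu^n,\mathfrak{r}^n)$ together with the associated $X^n$ be a tuple realising $\Gamma^n$. The strategy is to lift the picture: consider the joint laws $\Pi^n=\prob^n\circ(\xi^n,W^n,\mathfrak{r}^n,\mu^n,X^n)^{-1}$ on the Polish space $E=\R^d\times\contrd\times\mathcal{V}\times\contpdue\times\contrd$. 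Tightness of $(\Pi^n)_{n\geq 1}$ follows from: the marginals on $\xi^n$ and $W^n$ are always $\nu$ and $\wienermeasure$; $\mathcal{V}$ is compact by the discussion in Section \ref{existence:sezione_crtl_rilassati}; and the marginal on $(X^n,\mathfrak{r}^n,\mu^n)$ is tight because it converges. Invoke Prokhorov's theorem to extract a weakly convergent subsequence, and Skorokhod representation to realise convergence $(\tilde{\xi}^n,\tilde{W}^n,\tilde{\mathfrak{r}}^n,\tilde{\mu}^n,\tilde{X}^n)\to(\tilde{\xi},\tilde{W},\tilde{\mathfrak{r}},\tilde{\mu},\tilde{X})$ $\tilde{\prob}$-almost surely on a common probability space $(\tilde{\Omega},\tilde{\F},\tilde{\prob})$, with each prelimit sharing its law with the original. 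By construction $\tilde{\prob}\circ(\tilde{X},\tilde{\mathfrak{r}},\tilde{\mu})^{-1}=\Gamma$.

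Next define the candidate filtration $\tilde{\mathbb{F}}=(\tilde{\F}_t)_{t\in[0,T]}$ as the $\tilde{\prob}$-augmentation of $\sigma(\tilde{\xi},\tilde{\mu},\tilde{W}_s,\tilde{\mathfrak{r}}(C):\,s\leq t,\,C\in\boreliani{[0,t]\times A})$. Using the measurable map $\overline{q}$ from Section \ref{existence:sezione_crtl_rilassati}, $\tilde{\mathfrak{r}}$ is $\tilde{\mathbb{F}}$-progressively measurable, while $\tilde{\mu}$ and $\tilde{\xi}$ are $\tilde{\F}_0$-measurable. To verify that $\tilde{W}$ is an $\tilde{\mathbb{F}}$-Brownian motion independent of $\tilde{\F}_0$, I would fix $0\leq s<t\leq T$ and bounded continuous test functions $\phi$ on $\R^d$, $\Psi$ on $\contrd\times\contpdue\times\mathcal{V}$, then exploit the prelimit independence
\[
\E^{n}\!\bigl[\phi(W^n_t-W^n_s)\,\Psi(\xi^n,\mu^n,W^n|_{[0,s]},\mathfrak{r}^n|_{[0,s]\times A})\bigr]=\E^n\bigl[\phi(W^n_t-W^n_s)\bigr]\,\E^n\bigl[\Psi(\cdots)\bigr],
\]
and pass to the limit using almost-sure convergence plus boundedness. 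Together with the fact that $\tilde{W}$ has Wiener law as a marginal, this yields the Brownian motion property with respect to $\tilde{\mathbb{F}}$.

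The remaining two properties, the state equation and the consistency condition, both pass to the limit by continuity arguments. For the SDE, rewrite $X^n$ in integral form; the drift map $(y,q,m)\mapsto \int_0^t\!\int_A b(s,y_s,m_s,a)\,q_s(da)\,ds$ is continuous on $\contrd\times\mathcal{V}\times\contpdue$ by joint continuity of $b$ and weak convergence on $\mathcal{V}$ (in conjunction with the moment bound of Lemma \ref{lemma_stima_a_priori}, which supplies the uniform integrability needed to absorb the linear growth of $b$ via Assumptions \ref{standing_assumptions}). Combined with the Brownian motion property, this gives
\[
\tilde{X}_t=\tilde{\xi}+\int_0^t\!\int_A b(s,\tilde{X}_s,\tilde{\mu}_s,a)\,\tilde{\mathfrak{r}}_s(da)\,ds+\tilde{W}_t,\qquad \tilde{\prob}\text{-a.s.}
\]
For the consistency condition, for each $t\in[0,T]$ and bounded continuous $\phi:\R^d\to\R$ and $\Psi:\contpdue\to\R$, the functional $(x,m)\mapsto\phi(x_t)\Psi(m)-\langle m_t,\phi\rangle\Psi(m)$ is bounded and continuous on $\contrd\times\contpdue$, so the equality $\E^n[\phi(X^n_t)\Psi(\mu^n)]=\E^n[\langle\mu^n_t,\phi\rangle\Psi(\mu^n)]$ passes to the limit, yielding $\tilde{\mu}_t(\cdot)=\tilde{\prob}(\tilde{X}_t\in\cdot\,\vert\,\tilde{\mu})$ $\tilde{\prob}$-a.s. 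Collecting everything, the tuple $((\tilde{\Omega},\tilde{\F},\tilde{\mathbb{F}},\tilde{\prob}),\tilde{\xi},\tilde{W},\tilde{\mu},\tilde{\mathfrak{r}})$ realises $\Gamma$, so $\Gamma\in\mathcal{K}$.

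The main obstacle is the Brownian motion step: making sure that $\tilde{W}$ is a Brownian motion \emph{with respect to the enlarged filtration} $\tilde{\mathbb{F}}$ (which carries the extra information of $\tilde{\mu}$ and $\tilde{\mathfrak{r}}|_{[0,t]}$), not merely a Brownian motion in isolation. This forces one to track carefully the independence structure along the sequence and to check that it is preserved by the Skorokhod coupling, handling in particular the intertwining between $\tilde{W}$ and the relaxed control through the filtration used in the prelimit.
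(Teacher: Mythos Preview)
Your proof is correct but follows a different route from the paper's. You lift to the enlarged law $\Pi^n=\prob^n\circ(\xi^n,W^n,\mathfrak{r}^n,\mu^n,X^n)^{-1}$, invoke Skorokhod, and rebuild the filtration on the common space. The paper instead works directly on the canonical space $\contrd\times\mathcal{V}\times\contpdue$ and never introduces $W$ as a separate coordinate: it \emph{defines} the Brownian motion intrinsically by
\[
w_t(x,q,m)=x_t-x_0-\int_0^t\!\int_A b(s,x_s,m_s,a)\,q_s(da)\,ds,
\]
observes that each $w_t$ is a continuous function of $(x,q,m)$ with at most linear growth (via \cite[Corollary~A.5]{lacker2015martingale}), and then passes all the independence and martingale identities to the limit by weak convergence of $\Gamma^n$ alone, without any Skorokhod coupling. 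Your approach buys intuitiveness and is closer to the standard ``tightness plus identification of the limit'' template; the paper's approach buys economy, since it avoids enlarging the state space, checking tightness of extra marginals, and the bookkeeping you flag at the end concerning the Brownian property relative to the enlarged filtration (in the canonical picture this reduces to showing $w_t-w_s$ is independent of $\mathcal{B}_{s,\contrd}\otimes\F_s^{\mathcal{V}}\otimes\boreliani{\contpdue}$ under $\Gamma$, which follows from the analogous statement under $\Gamma^n$ by continuity of $w$). One minor point worth tightening in your argument: after obtaining the integral identity for $\tilde X$ and the $\tilde{\mathbb F}$-Brownian property of $\tilde W$, you should note that Lipschitz dependence of the drift on $x$ forces $\tilde X$ to be adapted to the $\tilde\prob$-augmentation of $\tilde{\mathbb F}$, so that the tuple genuinely fits Definition~\ref{existence:strategie_max}.
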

\begin{proof}
It is enough to prove that, for every sequence $(\Gamma^n)_{n \geq 1}\subseteq \mathcal{K}$ converging to $\Gamma$ as $n \to \infty$ in $\pwassmetric{2}{\contrd \times \mathcal{V} \times \contpdue}{}$, we have $\Gamma \in \mathcal{K}$.
We work on the following canonical space: let $(\overline{\Omega},\overline{\mathcal{G}})$ be given by 
\begin{equation*}
    (\overline{\Omega},\overline{\mathcal{G}})=\tonde{\contrd \times \contpdue \times \mathcal{V} ,\boreliani{\contrd} \otimes \boreliani{\mathcal{V}} \otimes \boreliani{\contpdue}}.
\end{equation*}
We equip such a space with the filtration $\mathbb{G}=(\mathcal{G}_t)_{t \in [0,T]}$ given by
\begin{equation*}
    \mathcal{G}_t=\mathcal{B}_{t,\contrd} \otimes \F_t^{\mathcal{V}} \otimes \boreliani{\contpdue},
\end{equation*}
where $\mathcal{B}_{t,\contrd}=\sigma(\contrd \ni x \mapsto x_s: \; s \leq t)$.
Let $x$, $m$ and $q$ denote the projection from $\overline{\Omega}$ in $\contrd$, $\contpdue$ and $\mathcal{V}$, respectively.
Define the process $w=(w_t)_{t \in [0,T]}$ as
\begin{equation}\label{existence:lemma_closedness:brownian_motion}
    w_t=w_t(x,q,m)=x_t-x_0-\int_0^t\int_A b(s,x_s,m_s,a)q_s(da)ds.
\end{equation}
Observe that $w$ is a continuous process on $(\overline{\Omega},\overline{\F})$ and, by \cite[Corollary A.5]{lacker2015martingale}, for every $t \in [0,T]$ $w_t$ is a continuous with at most linear growth function of $(x,q,m)$.

For every $n\geq 1$, let $\mathfrak{U}^n=((\Omega^n,\F^n,\mathbb{F}^n,\prob^n),\xi^n,W^n,\mu^n,\mathfrak{r}^n)$ and $X^n$ be as in Definition \ref{existence:strategie_max}, so that $\Gamma^n=\prob^n\circ(X^n,\mu^n,\mathfrak{r}^n)^{-1}$.
Since $\Gamma^n \circ (x_0,w,m,q,x)^{-1}=\prob^n\circ(\xi^n,W^n,\mu^n,\mathfrak{r}^n,X^n)^{-1}$, we have that the tuple $\mathfrak{U}^n=((\overline{\Omega},\overline{\F},\overline{\mathbb{F}}^{\Gamma^n},\Gamma^n),x_0,w,m,q)$ satisfies the requirements of Definition \ref{existence:strategie_max}, where $\overline{\mathbb{F}}^{\Gamma^n}$ denotes the $\Gamma^n$-augmentation of the filtration $\mathbb{G}$.
We show that the tuple $\mathfrak{U}=((\overline{\Omega},\overline{\F},\overline{\mathbb{F}}^{\Gamma},\Gamma),x_0,w,m,q)$ satisfies the requirements of Definition \ref{existence:strategie_max}, which implies $\Gamma \in \mathcal{K}$.

We start by the independence property of $w$, $m$ and $q$ under $\Gamma$.
Let $(t^i)_{i=1}^{k} \subseteq [0,T]$, $\varphi^i \in \cbounded{\R^d}$ for $i=1,\dots,n$, $\psi \in \cbounded{\contpdue}$, $\phi \in \cbounded{\R^d}$ be bounded continuous functions.
Since $W^n$, $\mu^n$ and $\xi^n$ are independent under $\prob^n$ and $\Gamma^n \to \Gamma$ weakly, we have
\begin{equation*}
\begin{aligned}
    \E^{\Gamma^n} & \quadre{\prod_{i=1}^k\varphi^i\tonde{w_{t^i}(x,q,m)}\psi(m)\varphi\tonde{x_0}} \\
    & = \E^{\prob^n}\quadre{\prod_{i=1}^k\varphi^i\tonde{W^n_{t^i}}\psi(\mu^n)\varphi\tonde{\xi^n}}= \E^{\prob^n}\quadre{\prod_{i=1}^k\varphi^i\tonde{W^n_{t^i}}}\E^{\prob^n}\quadre{\psi(\mu^n)}\E^{\prob^n}\quadre{\varphi\tonde{\xi^n}}  \\
    & = \E^{\Gamma^n}\quadre{\prod_{i=1}^k\varphi^i\tonde{w_{t^i}(x,q,m)}}\E^{\Gamma^n}\quadre{\psi(m)}\E^{\Gamma^n}\quadre{\varphi\tonde{x_0}}
\end{aligned}
\end{equation*}
where the first equality holds since $w_t(X^n,\mathfrak{r}^n,\mu^n)=W^n_t$ for every $t \in [0,T]$ $\prob^n$-a.s.
Then, since $\phi^i \circ w_{t^i}$ is a continuous function of $(x,q,m)$ for every $i$, weak convergence implies that
\begin{equation}\label{existence:lemma_closedness:convergenze}
\begin{aligned}
    \lim_{n \to \infty} & \E^{\Gamma^n}\quadre{\prod_{i=1}^k\varphi^i\tonde{w_{t^i}(x,q,m)}\psi(m)\varphi\tonde{x_0}} = \E^{\Gamma}\quadre{\prod_{i=1}^k\varphi^i\tonde{w_{t^i}(x,q,m)}\psi(m)\varphi\tonde{x_0}}, \\
    \lim_{n \to \infty} & \E^{\Gamma^n}\quadre{\prod_{i=1}^k\varphi^i\tonde{w^n_{t^i}(x,q,m)}}\E^{\Gamma^n}\quadre{\psi(m)}\E^{\Gamma^n}\quadre{\varphi\tonde{\xi^n}} \\
    & = \E^{\Gamma}\quadre{\prod_{i=1}^k\varphi^i\tonde{w_{t^i}(x,q,m)}}\E^{\Gamma}\quadre{\psi(m)}\E^{\Gamma}\quadre{\varphi\tonde{x_0}}.
\end{aligned}
\end{equation}
This is enough to ensure the mutual independence under $\Gamma$ of $(w_{t^i})_{i=1,\dots,k}$, $x_0$ and $m$ for every $(t^i)_{i=1}^{k}\subset [0,T]$, which yields the independence of $w$, $x_0$ and $m$.
Moreover, by taking $\psi$ and $\phi$ identically equal to $1$, equation
\eqref{existence:lemma_closedness:convergenze} implies that $w$ is natural Brownian motion under $\Gamma$, since the finite dimensional distributions of $w$ coincide with the ones of a Brownian motion. 
Let us verify the independence of increments properties.
Let $s > t$, $\varphi \in \cbounded{\contrd}$ $\mathcal{B}_{t,\contrd}$-measurable, $\chi \in \cbounded{\mathcal{V}}$ $\F_t^{\mathcal{V}}$-measurable, $\psi \in \cbounded{\contpdue}$ and $\phi \in \cbounded{\R^d}$.
Then, we have:
\begin{equation*}
\begin{aligned}
    \E^{\Gamma} & \quadre{\phi\tonde{w_s-w_t}\varphi\tonde{x}\chi\tonde{q}\psi(m)} = \E^{\Gamma}  \quadre{\phi\tonde{w_s(x,q,m)-w_t(x,q,m)}\varphi\tonde{x}\chi\tonde{q}\psi(m)} \\
    & =\lim_{n \to \infty}\E^{\Gamma^n}\quadre{\phi\tonde{w_s(x,q,m)-w_t(x,q,m)}\varphi\tonde{x}\chi\tonde{q}\psi(m)} \\
    & = \lim_{n \to \infty}\E^{\prob^n}\quadre{\phi\tonde{w_s(X^n,\mathfrak{r}^n,\mu^n)-w_t(X^n,\mathfrak{r}^n,\mu^n)}\varphi(X^n)\chi\tonde{\mathfrak{r}^n}\psi(\mu^n)} \\
    & = \lim_{n \to \infty}\E^{\prob^n}\quadre{\phi\tonde{W^n_s-W^n_t}\varphi(X^n)\chi\tonde{\mathfrak{r}^n}\psi(\mu^n)} = 0,
\end{aligned}
\end{equation*}
where the last equality holds since $W^n$ is a $\mathbb{F}^n$-Brownian motion under $\prob^n$, $\mu^n$ is $\F_0^n$-measurable and $X^n$ and $\mathfrak{r}^n$ are both $\mathbb{F}^n$-adapted.
By working with an approximating sequence, this holds also for bounded measurable $\varphi$, $\chi$, $\psi$ and $\phi$, which is enough to conclude the independence of increments.
Finally, since $w$ is $\mathbb{G}$-Brownian motion, it remains so under the $\Gamma$-augmentation of $\mathbb{G}$.

Since $\Gamma^n \circ x_0^{-1}\equiv \nu$, we have that $\Gamma \circ x_0 = \nu$ as well.
Moreover, since $w$ is a $\overline{\mathbb{F}}^{\Gamma}$-Brownian motion and $x$ is $\overline{\mathbb{F}}^{\Gamma}$-adapted by definition of the filtration, equation \eqref{existence:lemma_closedness:brownian_motion} implies that $x$ is a solution to \eqref{existence:eq_processo_K}.

As for the consistency condition, observe that, for every $t \in [0,T]$, $\varphi \in \cbounded{\R^d}$, $\psi \in \cbounded{\contpdue}$, we have 
\begin{align*}
    \E^{\Gamma^n}\quadre{\int_{\R^d}\varphi(y)m_t(dy)\psi(m)} & = \E^{\prob^n}\quadre{\int_{\R^d}\varphi(y)\mu^n_t(dy)\psi(\mu^n)}\\
     &=\E^{\prob^n}\quadre{\E^{\prob^n}\quadre{\varphi(X^n_t)\psi(\mu^n)\; \vert \;  \mu^n}}\\
     & =\E^{\prob^n}\quadre{\varphi(X^n_t)\psi(\mu^n)}= \E^{\Gamma^n}\quadre{\varphi\tonde{x_t}\psi(m)},
\end{align*}
since $\mu^n_t$ is a version of the conditional law under $\prob^n$ of $X^n_t$ given $\mu^n$.
Therefore, by weak convergence we have both
\begin{equation*}
\begin{aligned}
    & \lim_{n \to \infty}\E^{\Gamma^n}\quadre{\varphi\tonde{x_t}\psi(m)} = \E^{\Gamma}\quadre{\varphi\tonde{x_t}\psi(m)}, \\
    & \lim_{n \to \infty}\E^{\Gamma^n}\quadre{\int_{\R^d}\varphi(y)m_t(dy)\psi(m)} = \E^{\Gamma}\quadre{\int_{\R^d}\varphi(y)m_t(dy)\psi(m)}
\end{aligned}
\end{equation*}
where the second limit holds since the function $m \mapsto \int_{\contrd}\varphi(y)m_t(dy) \in \cbounded{\contpdue}$, which implies
\begin{equation*}
    \E^{\Gamma}\quadre{\int_{\R^d}\varphi(y)m_t(dy)\psi(m)}=\E^{\Gamma}\quadre{\varphi\tonde{x_t}\psi(m)}.
\end{equation*}
This is enough to conclude that $m_t = \Gamma(x_t \in \cdot \; \vert \; m)$ $\Gamma$-a.s for every $t \in [0,T]$, since the random element $(x_t,m)$ takes values in a Polish space.
\end{proof}

\begin{lemma}[Convexity]\label{existence:lemma_cvx_strategie}
$\mathcal{K}$ and $\mathcal{Q}$ are convex.
\end{lemma}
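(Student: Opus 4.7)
The plan is to prove both convexity statements by the standard mixture construction, building a single probability space that carries two given strategies with weights $\theta$ and $1-\theta$ together with an independent Bernoulli switch. Fix $\theta \in (0,1)$; the corner cases $\theta \in \{0,1\}$ are trivial.

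For $\mathcal{Q}$, let $\Sigma^1, \Sigma^2 \in \mathcal{Q}$ with associated tuples $\mathfrak{U}^i = ((\Omega^i,\F^i,\mathbb{F}^i,\prob^i),\xi^i,W^i,\mathfrak{r}^i)$. Form the product space $\Omega = \{1,2\} \times \Omega^1 \times \Omega^2$ with the obvious product $\sigma$-algebra, put $\prob(\eta=1) = \theta$, $\prob(\eta=2) = 1-\theta$, and take the product of $\prob^1,\prob^2$ on the remaining coordinates, independent of $\eta$. Define $\xi = \xi^1 \mathbf{1}_{\{\eta=1\}} + \xi^2 \mathbf{1}_{\{\eta=2\}}$, $W = W^1 \mathbf{1}_{\{\eta=1\}} + W^2 \mathbf{1}_{\{\eta=2\}}$, $\mathfrak{r} = \mathfrak{r}^1 \mathbf{1}_{\{\eta=1\}} + \mathfrak{r}^2 \mathbf{1}_{\{\eta=2\}}$, endow $\Omega$ with the augmentation of $\mathbb{F}_t = \sigma(\eta) \vee \F^1_t \vee \F^2_t$, and check: (a)~$\xi$ has law $\nu$ and is independent of $W$; (b)~$W$ is an $\mathbb{F}$-Brownian motion (this follows by computing characteristic functions of increments conditionally on $\eta$ and on $\F^1_s,\F^2_s$, using independence of the two sides and the Bernoulli); (c)~$\mathfrak{r}$ is $\mathbb{F}$-progressively measurable. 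Then, for each $m \in \contpdue$, the SDE \eqref{existence:eq_processo_Q} admits a strong solution $X^m$ which, restricted to $\{\eta=i\}$, coincides in law (given $\eta=i$) with the corresponding $X^{m,i}$ of the $i$-th tuple, by pathwise uniqueness. Hence the induced kernel $\Sigma(\,\cdot\,,m) = \theta \Sigma^1(\,\cdot\,,m) + (1-\theta)\Sigma^2(\,\cdot\,,m)$, proving $\theta \Sigma^1 + (1-\theta)\Sigma^2 \in \mathcal{Q}$.

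For $\mathcal{K}$, let $\Gamma^1, \Gamma^2 \in \mathcal{K}$ with tuples $\mathfrak{U}^i = ((\Omega^i,\F^i,\mathbb{F}^i,\prob^i),\xi^i,W^i,\mu^i,\mathfrak{r}^i)$ and state processes $X^i$. Perform the same mixture construction, adding $\mu = \mu^1 \mathbf{1}_{\{\eta=1\}} + \mu^2 \mathbf{1}_{\{\eta=2\}}$ and $X = X^1 \mathbf{1}_{\{\eta=1\}} + X^2 \mathbf{1}_{\{\eta=2\}}$, with $\mu$ placed in $\F_0$. Properties (i)--(iv) of Definition \ref{existence:strategie_max} follow exactly as above, and $X$ solves \eqref{existence:eq_processo_K} on $\Omega$ by pathwise uniqueness of the SDE given the ingredients on each event $\{\eta=i\}$. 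The joint law is $\prob \circ (X,\mathfrak{r},\mu)^{-1} = \theta \Gamma^1 + (1-\theta)\Gamma^2$, as required. The one point deserving care is the consistency condition: for any $B \in \boreliani{\R^d}$, $C \in \boreliani{\contpdue}$,
\begin{equation*}
\prob(X_t \in B,\,\mu \in C) = \theta \int_C m_t(B)\,\rho^1(dm) + (1-\theta)\int_C m_t(B)\,\rho^2(dm) = \int_C m_t(B)\,\rho(dm),
\end{equation*}
where $\rho^i$ is the law of $\mu^i$ and $\rho = \theta\rho^1 + (1-\theta)\rho^2$ is the law of $\mu$ on the mixed space. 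Hence $\prob(X_t \in B \mid \mu) = \mu_t(B)$ $\prob$-a.s., verifying \eqref{def_mean_field_sol:cons}.

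The only mildly delicate point in the whole argument is step~(b) above, ensuring that $W$ remains a Brownian motion with respect to the enlarged filtration that contains the switch variable $\eta$ and both original filtrations. This is settled by a conditional computation exploiting the independence of $\eta$ from $(W^1, W^2)$ and of $W^1$ from $W^2$; all other steps are routine verifications of measurability and SDE coefficients agreeing on each event $\{\eta = i\}$.
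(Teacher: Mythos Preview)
Your proof is correct and follows essentially the same approach as the paper: both use an independent Bernoulli switch $\eta$ on a product space to mix two given tuples, then verify that the resulting mixed objects satisfy the requirements of Definitions~\ref{existence:strategie_max} and~\ref{existence:strategie_min}. The paper treats $\mathcal{K}$ first and $\mathcal{Q}$ second (you do the reverse), and spells out the mutual independence of $\xi^\alpha,W^\alpha,\mu^\alpha$ more explicitly---a point that falls under your ``Properties (i)--(iv) follow exactly as above'' but relies crucially on the fact that $(\xi^i,W^i)$ have the same law for $i=1,2$, which lets the mixture factorize; otherwise the arguments are the same.
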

\begin{proof}
We start by proving that $\mathcal{K}$ is convex.
Let $\Gamma^i$, $i=1,2$, be in $\mathcal{K}$, and let $\alpha \in (0,1)$.
Let $\mathfrak{U}^i=((\Omega^i,\F^i,\mathbb{F}^i,\prob^i),\xi^i,W^i,\mu^i,\mathfrak{r}^i)$ be as in Definition \ref{existence:strategie_max}, so that $\Gamma^i=\prob^i\circ(X^i,\mathfrak{r}^i,\mu^i)^{-1}$.
Set $\Xi^i=(\xi^i,W^i,\mu^i,\mathfrak{r}^i,X^i)$.
Without loss of generality, we can suppose that the tuples are defined on the same probability space $(\Omega,\F,\mathbb{F},\prob)$ which supports also a Bernoulli random variable $\eta \sim B(\alpha)$, so that $\eta$ and $(\Xi^i)_{i=1,2}$ are mutually independent.
If needed, we can enlarge the filtration so that $\eta$ is $\F_0$-measurable.
Let us consider the following random variables:
\begin{equation}\label{existence:lemma_cvx_strategie:combinazioni_variabili}
    \begin{aligned}
        & \xi^\alpha=\eta\xi^1 + \tonde{1-\eta}\xi^2, && \mu^\alpha=\eta\mu^1 + \tonde{1-\eta}\mu^2, \\
        & W^\alpha=\eta W^1 + \tonde{1-\eta}W^2, \; && \mathfrak{r}^\alpha=\eta \mathfrak{r}^1 + \tonde{1-\eta}\mathfrak{r}^2, \\
        & X^\alpha=\eta X^1 + \tonde{1-\eta}X^2.
    \end{aligned}
\end{equation}
Set $\Xi^\alpha=(\xi^\alpha,W^\alpha,\mu^\alpha,\mathfrak{r}^\alpha,X^\alpha)$ and $\Gamma^\alpha=\prob\circ(X^\alpha,\mathfrak{r}^\alpha,\mu^\alpha)^{-1}$.
Observe that the law of $\Xi^1$ under $\prob$ is the same as the law of $\Xi^\alpha$ conditionally to $\eta=1$, as the two tuples coincide on the set $\{\eta=1\}$, and analogously for $\eta=0$.
Therefore, for every Borel set $B$, we have
\begin{equation}\label{existence:lemma_cvx:combinazione_cvx}
\begin{aligned}
    \prob\tonde{\Xi^\alpha\in B}
    & = \prob\tonde{\Xi^\alpha \in B \big \vert  \eta =1}\prob\tonde{\eta=1} + \prob\tonde{ \Xi^\alpha \in B \big \vert  \eta =0}\prob\tonde{\eta=0} \\
    & = \prob\tonde{ \Xi^1 \in B }\prob\tonde{\eta=1} + \prob\tonde{\Xi^2 \in B }\prob\tonde{\eta=0} \\
    & = \alpha\prob\tonde{ \Xi^1 \in B } + (1-\alpha)\prob\tonde{ \Xi^2 \in B }.
\end{aligned}
\end{equation}
In particular, \eqref{existence:lemma_cvx:combinazione_cvx} implies that $\Gamma^\alpha=\alpha\Gamma^1 + (1-\alpha)\Gamma^2$.
Let us show that the tuple $\Xi^\alpha$ satisfies the requirements of Definition \ref{existence:strategie_max}.
By \eqref{existence:lemma_cvx:combinazione_cvx}, $\xi^\alpha$ has law $\nu$ and $W^\alpha$ is a natural Brownian motion.
To see that it is an $\mathbb{F}$-Brownian motion, let $s<t$, $G \in \F_s$, $B \in \boreliani{\R^d}$: then
\begin{equation*}
\begin{aligned}
    \E\quadre{\1_{B}\tonde{W^\alpha_t-W^\alpha_s}\1_G}  = & \E\quadre{\1_{B}\tonde{W^\alpha_t-W^\alpha_s}\1_G \1_{\insieme{\eta =1}}} + \E\quadre{\1_{B}\tonde{W^\alpha_t-W^\alpha_s}\1_G\1_{\insieme{\eta =0}}} \\
     = &  \E\quadre{\1_B\tonde{\eta \tonde{W^1_t-W^1_s} + \tonde{1-\eta}\tonde{W^2_t-W^2_s} \in B}\1_G\1_{\insieme{\eta =0}}} \\
    & + \E\quadre{\1_B\tonde{\eta \tonde{W^1_t-W^1_s} + \tonde{1-\eta}\tonde{W^2_t-W^2_s} \in B}\1_G\1_{\insieme{\eta =1}} }\\
     = &  \E\quadre{\1_B\tonde{W^1_t-W^1_s}\1_{G \cap\insieme{\eta =1}}} + \E\quadre{\1_B\tonde{W^1_t-W^1_s}\1_{G \cap\insieme{\eta =0}} }= 0,
\end{aligned}
\end{equation*}
since $\eta$ is $\F_0$-measurable by assumption.
As for the mutual independence of $\xi^\alpha$, $\mu^\alpha$ and $W^\alpha$, we have that the joint law factorizes in the product of the marginals: by using \eqref{existence:lemma_cvx:combinazione_cvx}, since $(\xi^i,W^i)_{i=1,2}$ share the same joint law, one gets 
\begin{equation*}
\begin{aligned}
    \prob (\mu^\alpha & \in A, W^\alpha \in B,\xi^\alpha \in C) \\
    = & \alpha\prob(\mu^1 \in A, W^1  \in B, \xi^1 \in C)  + (1-\alpha)\prob(\mu^2 \in A, W^2  \in B, \xi^2 \in C) \\
    = & \alpha\prob(\mu^1 \in A)\prob( W^1  \in B)\prob(\xi^1 \in C)  + (1-\alpha)\prob(\mu^2 \in A)\prob(W^2\in B)\prob(\xi^2 \in C) \\
    = & \left(\alpha\prob(\mu^1 \in A)  +(1-\alpha) \prob(\mu^2 \in A) \right)\wienermeasure(B)\nu(C) = \prob(\mu^\alpha \in A)\prob(W^\alpha\in B)\prob(\xi^\alpha \in C).
\end{aligned}
\end{equation*}
With similar arguments, one can show that for every $t \in [0,T]$, $g:\R^d \to \R$, $f:\contpdue \to \R$ bounded and measurable, it holds
\begin{equation*}
\begin{aligned}
    \E \quadre{ g\tonde{X^\alpha_t} f\tonde{\mu^\alpha}} = \E\quadre{ \int_{\contrd} g\tonde{ y } \mu^\alpha_t (dy) f\tonde{\mu^\alpha}  },
\end{aligned}
\end{equation*}
which implies that $\mu^\alpha_t$ is a version of the conditional distribution of $X^\alpha_t$ given $\mu^\alpha$.
Finally, consider the set
\begin{equation*}
    \Omega^1=\insieme{X^1_t = \xi^1 + \int_0^t \int_A b(s,X^1_s,\mu^1_s,a)\mathfrak{r}^1_s(da)ds + W^1_t \;\; \forall t \in [0,T]}\cap \insieme{\eta=1},
\end{equation*}
and define analogously $\Omega^2$.
We have that $\Omega^1 \cap \Omega^2 = \emptyset$ and $\prob(\Omega^1)=\alpha$, since $X^1$ satisfies the equation above $\prob$-a.s., and analogously $\prob(\Omega^2)=1-\alpha$, so that $\prob(\Omega^1 \cup \Omega^2)=1$.
On such a set, $X^\alpha$ satisfies the equation
\begin{equation*}
    X^\alpha_t = \xi^\alpha + \int_0^t \int_A b(s,X^\alpha_s,\mu^\alpha_s,a)\mathfrak{r}^\alpha_s(da)ds + W^\alpha_t, \; t \in [0,T].
\end{equation*}
Since $X^\alpha$ is $\mathbb{F}$-adapted, $X^\alpha$ is a solution to equation \eqref{existence:eq_processo_K}, which concludes this part of the proof.

\smallskip
Let us turn to the convexity of the set $\mathcal{Q}$. Let $\Sigma^i$, $i=1,2$, be in $\mathcal{Q}$, and $\alpha \in (0,1)$.
Let $\mathfrak{U}^i=((\Omega^i,\F^i,\mathbb{F}^i,\prob^i)$, $\xi^i,W^i,\mathfrak{r}^i)$ be as in Definition \ref{existence:strategie_min} so that $\Sigma^i(\cdot,m)=\prob^i((X^{m,i},\mathfrak{r}^i)\in \cdot )$, where $X^{m,i}$ is the solution to equation \eqref{existence:eq_processo_Q} on $(\Omega^i,\F^i,\mathbb{F}^i,\prob^i)$ when $b$ is evaluated at $m \in \contpdue$.
Let $\Theta^i=\prob^i\circ(\xi^i,W^i,\mathfrak{r}^i)^{-1}$, and consider the maps $\mathcal{I}_{\Theta^i}$ defined by 
\begin{equation}
\begin{aligned}
    \mathcal{I}_{\Theta^i}: \contpdue  & \longrightarrow \mathcal{P}(\R^d \times \contrd \times \contrd \times \mathcal{V}) \\
    m & \longmapsto \mathcal{I}_{\Theta^i}(m)=\prob^i\circ(\xi^i,W^i,X^{m,i},\mathfrak{r}^i)^{-1}.
\end{aligned}
\end{equation}
Similarly as for the set $\mathcal{K}$, suppose that the tuples are defined on the same probability space $(\Omega,\F,\mathbb{F},\prob)$ supporting also a Bernoulli random variable $\eta \sim B(\alpha)$, so that $\eta$ and $(\xi^i,W^i,\mathfrak{r}^i)_{i=1,2}$ are mutually independent.
If needed, we can enlarge the filtration so that $\eta$ is $\F_0$-measurable.
Let $\xi^\alpha$, $W^\alpha$ and $\mathfrak{r}^\alpha$ be as in \eqref{existence:lemma_cvx_strategie:combinazioni_variabili}, and, for every $m \in \contpdue$, define
\begin{equation*}
    \begin{aligned}
        X^{\alpha,m}=\eta X^{1,m} + \tonde{1-\eta}X^{2,m}.
    \end{aligned}
\end{equation*}
Let $\Theta^\alpha=\prob^\alpha \circ ( \xi^\alpha,W^\alpha,\mathfrak{r}^\alpha)^{-1}$ and consider the map $\mathcal{I}_{\Theta^\alpha}$, defined analogously to above.
By point \ref{existence:lemma_kernels:kernel_ben_definito} of Lemma \ref{existence:lemma_kernels}, it induces a stochastic kernel $\Sigma^\alpha \in \mathcal{Q}$.
By working in the same way as in the case of $\mathcal{K}$, we can show that 
$\mathcal{I}_{\Theta^\alpha}(m) = \alpha\mathcal{I}_{\Theta^1}(m) + (1-\alpha)\mathcal{I}_{\Theta^2}(m)$ for each $m \in \contpdue$, which implies that $\Sigma^\alpha=\alpha \Sigma^1 + (1-\alpha)\Sigma^2 \in \mathcal{Q}$.
\end{proof}

\begin{proposition}\label{existence:prop_payoff_continuo}
The map $\mathcal{K} \times \mathcal{Q} \ni (\Gamma,\Sigma) \mapsto \mathfrak{p}(\Gamma,\Sigma)$ is bilinear.
Moreover, $\mathcal{K} \ni \Gamma \mapsto \mathfrak{p}(\Gamma,\Sigma)$ is continuous for every $\Sigma \in \mathcal{Q}$.
\end{proposition}
\begin{proof}
Bilinearity is clear, hence we focus on the continuity of $\mathfrak{p}(\cdot,\Sigma)$ for fixed $\Sigma$.
Take $(\Gamma^n)_{n\geq 1}$, $\Gamma$ in $\mathcal{K}$ and suppose $\Gamma^n \to \Gamma$ in the 2-Wasserstein distance.
We treat separately the term depending just upon $\Gamma \in \mathcal{K}$ and the term depending also upon $\Sigma \in \mathcal{Q}$ in \eqref{existence:payoff_functional}.

\smallskip
By \cite[Theorem 7.12]{villani2003}, $\Gamma^n \to \Gamma$ in 2-Wasserstein metrics if and only if
\begin{equation}
\int_{\contrd \times\mathcal{V} \times \contpdue}\psi(y,q,m)\Gamma^n(dy,dq,dm) \to \int_{\contrd \times\mathcal{V} \times \contpdue}\psi(y,q,m)\Gamma(dy,dq,dm),
\end{equation}
for every $\psi$ continuous with at most quadratic growth; hence, we just need to show that the functional $\mathfrak{F}$ defined in \eqref{existence:funzione_f} is continuous with at most quadratic growth.
By Assumptions \ref{standing_assumptions} and \cite[Corollary A.5]{lacker2015martingale}, we have that $\mathfrak{F}(y,q,m)$ is continuous.
It is straightforward to verify that $\mathfrak{F}$ has at most quadratic growth, in the sense that
\begin{equation*}
    \mathfrak{F}(y,q,m) \leq C\tonde{1 + \norm{y}_{\contrd}^2 + \sup_{t \in [0,T]} \int_{\R^d} \abs{y}^2 m_t(dy) + \int_0^T \int_A \abs{a-a_0}^2 q_s(da)ds }.
\end{equation*}
Therefore, we get continuity of the term depending only upon $\Gamma$.

\smallskip
Denote by $\rho^n$ and $\rho$ the marginal of $\Gamma^n$ and $\Gamma$ on $\contpdue$.
We can manipulate the term depending both upon $\Gamma$ and $\Sigma$ as
\begin{equation*}
    \begin{aligned}
        & \int_{\contrd\times \mathcal{V}\times \contpdue}  \mathfrak{F} (y,q,m) \Sigma(dy,dq,m)\rho(dm) \\
        & \: = \int_{\contpdue}\tonde{\int_{\contrd\times \mathcal{V}} \mathfrak{F} (y,q,m) \Sigma(dy,dq,m)}\rho(dm)= \int_{\contpdue} g(m)\rho(dm)
\end{aligned}
\end{equation*}
where we set
\begin{equation*}
    g(m)=\int_{\contrd\times \mathcal{V}} \mathfrak{F} (y,q,m) \Sigma(dy,dq,m).
\end{equation*}
We must show that $g: \contpdue\to\R$ is continuous with at most quadratic growth with respect to the 2-Wasserstein distance.
As for the growth condition, estimate \eqref{existence:lemma_continuita:bound_uniformi} in Lemma \ref{existence:lemma_kernels} proves that $g$ has at most quadratic growth in $m \in \contpdue$.
As for the continuity, let $(m^n)_{n \geq 1},m \in \contpdue$ so that $m^n \to m$ in $\pwassmetric{2}{\contrd}{}$.
Note that $\Sigma(dy,dq,m^n) \to \Sigma(dy,dq,m)$ in $\pwassmetric{2}{\contrd \times \mathcal{V}}{}$, as implied by Lemma \ref{existence:lemma_kernels}.
Define $\phi^n(y,q)=\mathfrak{F}(y,q,m^n)$.
Since the cost functions are locally Lipschitz, we have that $\phi^n$ converges to $\phi$ uniformly on bounded sets of $\contrd \times \mathcal{V}$.
This is enough to conclude that
\begin{equation*}
    g(m^n)=\int_{\contrd\times \mathcal{V}} \phi^n (y,q) \Sigma(dy,dq,m^n) \to \int_{\contrd\times \mathcal{V}} \phi (y,q) \Sigma(dy,dq,m)=g (m)
\end{equation*}
as $m^n \to m$.
\end{proof}

We can now prove points \ref{existence:thm_esistenza_strategia_ottima:existence_value} and \ref{existence:thm_esistenza_strategia_ottima:optimal_strategy} of Theorem \ref{existence:thm_esistenza_strategia_ottima}: take $X=\mathcal{K}$, $Y=\mathcal{Q}$ and $f(x,y)=\mathfrak{p}(\Gamma,\Sigma)$ in the statement of Theorem \ref{thm_minimax}.
By Lemmata \ref{existence:lemma_tightness} and \ref{existence:lemma_closedness}, $\mathcal{K}$ is compact with the topology of convergence in 2-Wasserstein distance and both sets $\mathcal{K}$ and $\mathcal{Q}$ are convex by Lemma \ref{existence:lemma_cvx_strategie}.
By Proposition \ref{existence:prop_payoff_continuo}, the payoff $\mathfrak{p}$ is both concave and continuous in $\Gamma$ for every fixed $\Sigma \in \mathcal{Q}$ and convex in $\Sigma$ for every fixed $\Gamma$.
Therefore, Theorem \ref{thm_minimax} yields the existence of both the value $v$ of the auxiliary zero-sum game and an optimal strategy for player A. 
The next proposition proves point \ref{existence:thm_esistenza_strategia_ottima:positive_value}, concluding the proof of Theorem \ref{existence:thm_esistenza_strategia_ottima}.

\begin{proposition}[Positivity of the value of the auxiliary zero-sum game]\label{existence:prop_positive_value}
Let $v$ be the value of the zero-sum game defined in Definition \ref{existence:def_zerosum} has a value $v$.
Then $v \geq 0$.
\end{proposition}
\begin{proof}
We show that, for every $\Sigma \in \mathcal{Q}$ there exists a strategy $\Gamma_{\Sigma} \in \mathcal{K}$ so that $\mathfrak{p}(\Gamma_{\Sigma},\Sigma)=0$.
Fix $\Sigma \in \mathcal{Q}$, let $\mathfrak{U}=((\Omega,\F,\mathbb{F},\prob),\xi,W,\mathfrak{b})$ be a tuple as in Definition \ref{existence:strategie_min} so that $\Sigma(\cdot,m)=\prob((X^m,\mathfrak{b})\in\cdot)$, for every $m \in \contpdue$.
On this probability space, consider the following stochastic differential equation of McKean-Vlasov type:
\begin{equation}\label{existence:positivity_MKV_SDE}
   \begin{cases}
   dY_t=\int_A b(t,Y_t,p_t,a)\mathfrak{b}_t(da)dt + dW_t, \; t \in [0,T], \quad Y_0=\xi; \\
   \mathcal{L}(Y_t)=p_t, \: t \in [0,T], \quad p=(p_t)_{t \in [0,T]} \in \contpdue.
   \end{cases}
\end{equation}
Under Assumptions \ref{standing_assumptions}, there exists a unique pair $(Y,p)$ satisfying \eqref{existence:positivity_MKV_SDE}, where $Y=(Y_t)_{t \in [0,T]}$ is an $\mathbb{F}$-adapted continuous process so that $\E[\sup_{t \in [0,T]}\vert Y_t \vert ^2]<\infty$, as ensured by, e.g., \cite[Theorem 4.21]{librone_vol1}, which implies that $p$ actually belongs to $\contpdue$.
Define the deterministic flow of measures $\mu$ by setting $\mu=p$.
Define $\Gamma_{\Sigma}$ as $\prob\circ(Y,\mu,\mathfrak{b})^{-1}$.
Since $\mu=p$ is deterministic and $(Y,p)$ is a solution to \eqref{existence:positivity_MKV_SDE}, $\mu$ is $\F_0$-measurable and independent of $\xi$ and $W$, and consistency condition holds trivially.
This implies that $\Gamma_{\Sigma}$ belongs to $\mathcal{K}$.
By writing the integrals in $\mathfrak{p}$ as expectations, we have:
\begin{equation*}
\begin{aligned}
    & \mathfrak{p} (\Gamma_{\Sigma},\Sigma) = \int_{\contrd\times \mathcal{V}\times \contpdue}  \mathfrak{F} (y,q,m) \Sigma(dy,dq,m)\rho_{\Sigma}(dm) - \int_{\contrd\times \mathcal{V}\times \contpdue} \mathfrak{F} (y,q,m)  \Gamma_{\Sigma}(dy,dq,dm) \\
    & =  \E \bigg[\int_0^T \int_A f(t,Y_t,p_t,a)\mathfrak{b}_t(da)dt + g(Y_T,p_T)\bigg] - \E\bigg[ \int_0^T \int_A f(t,Y_t,p_t,a)\mathfrak{b}_t(da)dt + g(Y_T,p_T) \bigg] \\
    & = 0,
\end{aligned}
\end{equation*}
where $\rho_{\Sigma}(\cdot)=\delta_{p}(\cdot)$ denotes the marginal law of $\Gamma_{\Sigma}$ on $\contpdue$.
Since such a construction holds for every $\Sigma \in \mathcal{Q}$, we have 
\begin{equation*}
    \sup_{ \Gamma \in \mathcal{K}} \mathfrak{p}(\Gamma,\Sigma)  \geq \mathfrak{p}(\Gamma_{\Sigma},\Sigma) = 0 \quad \forall \Sigma \in \mathcal{Q}.
\end{equation*}
Taking the infimum with respect to $\Sigma \in \mathcal{Q}$,  we have 
\begin{equation*}
    \inf_{\Sigma \in \mathcal{Q}} \sup_{ \Gamma \in \mathcal{K}} \mathfrak{p}(\Gamma,\Sigma)  \geq \mathfrak{p}(\Gamma_{\Sigma},\Sigma)  \geq 0,
\end{equation*}
which shows that $v^B$ is non-negative.
Since $v^A=v^B=v$, this proves that the value of the auxiliary zero-sum game is non-negative.
\end{proof}

\subsection{Further technical Lemmata}\label{appendix_auxiliary:technical}
In this section, we state and prove some auxiliary results that were used in Section \ref{sezione_existence} to prove the existence of a mean field CCE.
In particular, Lemmata \ref{existence:lemma_kernels} and \ref{existence:lemma_decomposizione} provide the technical instruments we used in Proposition \ref{existence:prop_relazione_mfg} to show that, for every deviating strategy $\beta \in \A$ and random flow of measures $\mu$, we can represent the joint law of $\mu$, $\beta$ and deviating player's state process in terms of a strategy for player B in the zero-sum game \ref{existence:def_zerosum} and the the law of $\mu$.
Lemmata \ref{existence:lemma_mimicking} and \ref{existence:mimicking:lemma_strong_existence} were needed in the proof of Theorem \ref{existence:main_theorem} in order to define a mean field CCE starting from an optimal strategy for player A in the zero-sum game \ref{existence:def_zerosum}.

\smallskip
Consider any tuple $\mathfrak{U}=((\Omega,\F,\mathbb{F},\prob),\xi,W,\mu,\mathfrak{r})$, composed of a filtered probability space satisfying usual assumptions, a $d$-dimensional $\mathbb{F}$-Brownian motion, an $\R^d$-valued $\F_0$-measurable random variable, an $\F_0$-measurable
random continuous flow of measures in $\mathcal{P}^2(\R^d)$ and an $\mathbb{F}$-progressively measurable $\mathcal{P}(A)$-valued process.
Assume that $\mu$, $W$ and $\xi$ are mutually independent.
Let us consider the following equations:
\begin{align}
    dX_t & =\int_A b(t,X_t,\mu_t,a)\mathfrak{r}_t(da)dt + dW_t, \quad X_0=\xi, \label{existence:eq_misura_aleatoria} \\
    dX^m_t & =\int_A b(t,X^m_t,m_t,a)\mathfrak{r}_t(da)dt + dW_t, \quad X_0=\xi, \label{existence:eq_misura_deterministica}
\end{align}
where $m$ is a point of $\contpdue$.
In order to stress the dependence upon the deterministic flow of measures $m$, we write $X^m$ for the solution of \eqref{existence:eq_misura_deterministica}.

By Assumptions \ref{standing_assumptions}, on any such tuple $\mathfrak{U}$ there exists a solution to equation \eqref{existence:eq_misura_aleatoria} and pathwise uniqueness holds.
If needed, we can suppose that the filtration $\mathbb{F}$ on $(\Omega,\F,\prob)$ is the $\prob$-augmentation of the filtration $\mathbb{F}^{\xi,W,\mu,\mathfrak{r}}$, given by
\begin{equation}\label{existence:filtrazione_forte_ctrl}
    \F_t^{\xi,W,\mu,\mathfrak{r}}=\sigma(\xi)\vee\sigma(\mu)\vee\sigma(W_s: s\leq t)\vee\sigma(\mathfrak{r}(C): C \in \boreliani{[0,t]\times A}).
\end{equation}
By Theorem \ref{teorema_di_unicita_legge}, uniqueness in law holds.
Analogous reasoning holds for equation \eqref{existence:eq_misura_deterministica} as well, for every $m \in \contpdue$.

\begin{lemma}\label{existence:lemma_kernels}
Let $\mathfrak{U}=((\Omega,\F,\mathbb{F},\prob),\xi,W,\mu,\mathfrak{r})$ be as above, let $\Theta \in \mathcal{P}(\R^d \times \contrd \times \mathcal{V})$ be the joint law of $\xi$, $W$ and $\mathfrak{r}$.
Let us define the map
\begin{equation}\label{existence:mappa_strat_min_continua}
\begin{aligned}
    \mathcal{I}_{\Theta}: \contpdue  & \longrightarrow \mathcal{P}(\R^d \times \contrd \times \contrd \times \mathcal{V}) \\
    m & \longmapsto \mathcal{I}_{\Theta}(m)=\prob\circ(\xi,W,X^m,\mathfrak{r})^{-1},
\end{aligned}
\end{equation}
where $X^m$ is the solution to equation \eqref{existence:eq_misura_deterministica}.
\begin{enumerate}[label=(\roman*)]
    \item \label{existence:lemma_kernels:mappa_continua}
    The map $\mathcal{I}_{\Theta}$ is continuous, in the sense that
    \begin{equation*}
        \sup_{t \in [0,T]}\pwassmetric{2}{\R^d}{}(m^n_t,m_t) \to 0 \text{ as } n \to \infty \; \Longrightarrow \; \mathcal{I}_{\Theta}(m^n) \overset{n \to \infty}{\longrightarrow} \mathcal{I}_{\Theta}(m) \text{ in } \pwassmetric{2}{\R^d \times \contrd \times  \mathcal{V} \times \contrd}{}.
    \end{equation*}
    
    \item \label{existence:lemma_kernels:kernel_ben_definito}
    The map $\mathcal{I}_{\Theta}$ induces a stochastic kernel $\Sigma$ from $\contpdue$ to $\contrd \times \mathcal{V}$, by setting
    \begin{equation*}
        \Sigma(B,m)=\prob((X^m,\mathfrak{r}) \in B)=\mathcal{I}_{\Theta}(m)(\R^d \times \contrd \times B) \quad \forall m \in \contpdue, \; B \in \boreliani{\contrd}\otimes\boreliani{\mathcal{V}}.
    \end{equation*}
    $\Sigma$ is a strategy for player B, as described in Definition \ref{existence:strategie_min}.
\end{enumerate}
\end{lemma}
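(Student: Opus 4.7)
The plan is to exploit the fact that $(\xi,W,\mathfrak{r})$ is shared across all choices of the deterministic flow $m$, so that changing $m$ only perturbs the state process $X^m$. Before addressing (i) and (ii), I would first establish the standard a priori bound
\begin{equation}\label{existence:lemma_continuita:bound_uniformi}
    \E\quadre{\|X^m\|^2_{\contrd}} \leq C \tonde{1 + \sup_{t \in [0,T]}\int_{\R^d}|y|^2 m_t(dy)},
\end{equation}
for some constant $C$ independent of $m$, by an $L^2$-Gronwall argument applied to \eqref{existence:eq_misura_deterministica} using the Lipschitz continuity and linear growth of $b$ from Assumptions \ref{standing_assumptions}. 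This estimate has the side benefit of underpinning the quadratic-growth claim used later in Proposition \ref{existence:prop_payoff_continuo}.

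For part (i), given $m^n \to m$ in $\contpdue$, I would consider the natural coupling on the same probability space in which $\xi$, $W$ and $\mathfrak{r}$ are identified across both processes. This yields a joint law
\begin{equation*}
    \prob\circ(\xi,W,X^{m^n},\mathfrak{r},\xi,W,X^m,\mathfrak{r})^{-1}
\end{equation*}
whose marginals are $\mathcal{I}_{\Theta}(m^n)$ and $\mathcal{I}_{\Theta}(m)$. Since $\mathcal{V}$ is compact (so any compatible distance on it is bounded and the $\mathfrak{r}$--coordinates contribute $0$), and the $\xi,W$ coordinates contribute $0$, the transport cost reduces to
\begin{equation*}
    \pwassmetric{2}{\R^d \times \contrd \times \contrd \times \mathcal{V}}{2}\tonde{\mathcal{I}_{\Theta}(m^n),\mathcal{I}_{\Theta}(m)} \leq \E\quadre{\|X^{m^n}-X^m\|_{\contrd}^2}.
\end{equation*}
The Lipschitz continuity of $b$ in $(x,m)$ (applied inside the relaxed control integral via Jensen's inequality) combined with Gronwall's lemma gives
\begin{equation*}
    \E\quadre{\|X^{m^n}-X^m\|_{\contrd}^2} \leq C \int_0^T \pwassmetric{2}{\R^d}{2}(m^n_s,m_s)\,ds \xrightarrow[n\to\infty]{} 0,
\end{equation*}
which is the desired Wasserstein continuity.

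For part (ii), by (i) the map $\mathcal{I}_{\Theta}$ is continuous, and hence so is its pushforward $m \mapsto \Sigma(\cdot,m) = \prob\circ(X^m,\mathfrak{r})^{-1}$ viewed as a map into $\mathcal{P}(\contrd\times\mathcal{V})$. In particular it is Borel measurable. A standard monotone class argument—starting from the fact that $\nu\mapsto\int\varphi\,d\nu$ is continuous for every $\varphi\in\cbounded{\contrd\times\mathcal{V}}$, and extending to Borel indicators—yields measurability of $m\mapsto\Sigma(B,m)$ for every $B\in\boreliani{\contrd}\otimes\boreliani{\mathcal{V}}$, so that $\Sigma$ is a genuine stochastic kernel. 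That $\Sigma\in\mathcal{Q}$ is then immediate: the tuple obtained from $\mathfrak{U}$ by simply dropping $\mu$, namely $((\Omega,\F,\mathbb{F},\prob),\xi,W,\mathfrak{r})$, satisfies the requirements of Definition \ref{existence:strategie_min}, with $X^m$ playing the role of the process solving \eqref{existence:eq_processo_Q}.

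The main obstacle here is really just bookkeeping—the argument is a standard SDE stability/coupling estimate combined with a standard measurable-pushforward observation. The only subtle point is verifying in part (i) that the Wasserstein distance on the product space $\R^d \times \contrd \times \contrd \times \mathcal{V}$ genuinely reduces to $\E[\|X^{m^n}-X^m\|^2_{\contrd}]$ under the chosen coupling, which requires noting the compactness of $\mathcal{V}$ to neutralize the cost on the relaxed-control coordinate.
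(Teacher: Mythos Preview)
Your proof is correct. For part (i), you take a more direct route than the paper: whereas the paper first shows convergence in distribution via $\E[\|X^{m^n}-X^m\|^2_{\contrd}]\to 0$ and then separately verifies the uniform integrability condition \eqref{wass:uniforme_integrabilita} (using Cauchy--Schwarz, Markov's inequality, and a fourth-moment bound), you bound the $2$-Wasserstein distance directly by the transport cost under the explicit coupling that identifies $(\xi,W,\mathfrak{r})$, and this cost is exactly $\E[\|X^{m^n}-X^m\|^2_{\contrd}]$. Your approach is cleaner here and avoids the uniform integrability step entirely. One small correction: the $\mathfrak{r}$-coordinate contributes zero cost because the coupling \emph{identifies} the two relaxed controls, not because $\mathcal{V}$ is compact; compactness of $\mathcal{V}$ plays no role in this reduction. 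For part (ii), your monotone class argument is equivalent to the paper's appeal to the standard fact that the Borel $\sigma$-field on $\mathcal{P}(E)$ coincides with the $\sigma$-field generated by the evaluation maps $\nu\mapsto\nu(S)$.
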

\begin{proof}
Note that, by Theorem \ref{teorema_di_unicita_legge}, $\mathcal{I}_{\Theta}(m)$ is the unique weak solution of \eqref{existence:eq_misura_deterministica} when the joint law of $\xi$, $W$ and $\mathfrak{r}$ is given by $\Theta$ and $b$ is evaluated at $m \in \contpdue$.
Let $\Sigma \in \mathcal{Q}$, let $(m^n)_{n\geq 1} \subset \contpdue$ so that $m^n \to m$.
For every $n\geq 1$, denote by $X$ and $X^n$ the solution to equation \eqref{existence:eq_misura_deterministica} when $b$ is evaluated at $m$ and $m^n$, respectively.
For every $2 \leq p \leq \overline{p}$, by Lipschitz continuity of $b$, we have:
\begin{align}
    \E\quadre{\sup_{0 \leq s \leq T}\abs{X^n_s-X_s}^p}\leq C \sup_{t \in [0,T]}\pwassmetric{p}{\R^d}{2}(m^n_t,m_t), \label{existence:lemma_continuita:stime_norme} \\ 
    \sup_{n \geq 1}\attesa{\sup_{t \in [0,T]}\abs{X^n_t}^p}\leq C\tonde{1 + \sup_{n \geq 1} \sup_{t \in [0,T]} \tonde{\int_{\R^d}\abs{y}^2 m^n_t(dy)}^{\frac{p}{2}}}. \label{existence:lemma_continuita:bound_uniformi}
\end{align}
Therefore, for $p=2$, we have that $m^n \to m$ in $\contpdue$ implies that $\lVert  X^n-X \rVert_{\contrd}^2 \to 0$ in expectation, which in turns implies that $(\xi,W,\mathfrak{r},X^n) \longrightarrow (\xi,W,\mathfrak{r},X)$ in distribution.
In order to have convergence in 2-Wasserstein metrics, it is enough to check uniform integrability, according to  \eqref{wass:uniforme_integrabilita}.
Since $(\mathcal{I}_{\Theta}(m^n))_n$ have the same marginals on $\R^d\times\contrd\times\mathcal{V}$, we just need to check \eqref{wass:uniforme_integrabilita} for the laws of $(X^n)_n$: for every $n \geq 1$, $r > 0$, we have
\begin{equation*}
\begin{aligned}
    \E\quadre{\norm{X^n}_{\contrd}^2\1_{\insieme{\norm{X^n}^2_{\contrd}>r}}} & \leq  \tonde{\E\quadre{\norm{X^n}_{\contrd}^{4}}}^\frac{1}{2}\tonde{\E\quadre{\1_{\insieme{\norm{X^n}^2_{\contrd}>r}}}}^\frac{1}{2}  \\
    & \leq \tonde{\E\quadre{\norm{X^n}_{\contrd}^{4}}}^\frac{1}{2} \E\quadre{\norm{X^n}_{\contrd}^2}^\frac{1}{2}r^{-\frac{1}{2}} \leq C r^{-\frac{1}{2}}
\end{aligned}
\end{equation*}
by using Cauchy-Schwartz inequality, Markov inequality, \eqref{existence:lemma_continuita:stime_norme} and \eqref{existence:lemma_continuita:bound_uniformi}.
By taking the limit as $r \to \infty$, we get condition \eqref{wass:uniforme_integrabilita} satisfied and so point \ref{existence:lemma_kernels:mappa_continua} is proved.

\smallskip
As for point \ref{existence:lemma_kernels:kernel_ben_definito}, let $\pi:\R^d\times\contrd\times\contrd\times\mathcal{V}\to\contrd\times\mathcal{V}$ be the projection on the last two components.
Note that $(\mathcal{I}_{\Theta}\circ \pi^{-1})(m)=\prob \circ (X^m,\mathfrak{r})^{-1}$, which shares the same continuity properties of the map $\mathcal{I}_{\Theta}$.
Therefore, in particular, it is Borel measurable, where $\mathcal{P}(\contrd \times \mathcal{V})$ is endowed with the usual Borel $\sigma$-algebra associated with the topology of weak convergence.
Then, the thesis follows from the fact that, for a Polish space $E$, the usual Borel $\sigma$-field on $\mathcal{P}(E)$ coincide with the $\sigma$-field generated by the maps $\mathcal{P}(E) \ni m \mapsto m(S)$, with $S \in \boreliani{E}$, (see, e.g., \cite[Corollary 7.29.1]{bertsekas_shreve}).
\end{proof}

\begin{lemma}\label{existence:lemma_decomposizione}
Let $\mathfrak{U}=((\Omega,\F,\mathbb{F},\prob),\xi,W,\mu,\mathfrak{r})$ be a tuple so that $(\xi,W,\mathfrak{r})$ and $\mu$ are independent.
Denote by $\rho \in \mathcal{P}(\contpdue)$ the law of $\mu$ under $\prob$.
Suppose without loss of generality that $\mathbb{F}$ is the $\prob$-augmentation of the filtration $(\F^{\xi,W,\mu,\mathfrak{r}}_t)_t$ defined by \eqref{existence:filtrazione_forte_ctrl}.
Let $X$ be the unique solution of \eqref{existence:eq_misura_aleatoria} on the tuple $\mathfrak{U}$.
Then, the following decomposition of measure holds
\begin{equation*}
    \prob((X,\mathfrak{r},\mu) \in B \times S)= \int_S \Sigma(B,m)\rho(dm), \quad \forall B \in \boreliani{\contrd \times \mathcal{V}}, \; S \in \boreliani{\contpdue}.
\end{equation*}
In particular, $\Sigma(B,m)=\prob((X,\mathfrak{r}) \in B \; \vert \; \mu=m)=\prob((X^m,\mathfrak{r}) \in B )$ for every $B \in \boreliani{\contrd \times \mathcal{V}}$, $\rho$-a.e. $m \in \contpdue$.
\end{lemma}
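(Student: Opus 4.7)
The plan is to disintegrate $\prob$ along the random variable $\mu$ and then invoke the weak uniqueness result of Theorem \ref{teorema_di_unicita_legge}. Since $(\Omega^0,\F^{0-})$ and $\contpdue$ are Polish (hence so is $(\Omega,\F)$), there exists a regular conditional probability $\prob^m(\cdot)=\prob(\cdot\mid \mu=m)$, and the identity
\begin{equation*}
    \prob((X,\mathfrak{r},\mu)\in B\times S) \;=\; \int_S \prob^m((X,\mathfrak{r})\in B)\,\rho(dm),\qquad B\in\boreliani{\contrd\times\mathcal V},\ S\in\boreliani{\contpdue},
\end{equation*}
is automatic from the definition of regular conditional probability. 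It then suffices to show that for $\rho$-a.e.\ $m\in\contpdue$, the probability measure $\prob^m\circ(X,\mathfrak{r})^{-1}$ coincides with $\Sigma(\cdot,m)=\prob\circ(X^m,\mathfrak{r})^{-1}$.

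Fix such an $m$ outside a $\rho$-null set. First, since $(\xi,W,\mathfrak{r})$ is independent of $\mu$ under $\prob$, for $\rho$-a.e.\ $m$ one has $\prob^m\circ(\xi,W,\mathfrak{r})^{-1}=\prob\circ(\xi,W,\mathfrak{r})^{-1}$; in particular $\xi\sim\nu$ under $\prob^m$ and the finite-dimensional distributions of $W$ together with its independent-increments structure are preserved, so $W$ is a natural Brownian motion under $\prob^m$. A short argument analogous to the one used in Lemma \ref{lemma_moto_browniano} (using that $\mu$ is $\F_0^{\xi,W,\mu,\mathfrak{r}}$-measurable, so conditioning on $\mu=m$ does not destroy the independence of increments relative to $\F_t^{\xi,W,\mu,\mathfrak{r}}$) upgrades $W$ to an $\mathbb{F}$-Brownian motion under $\prob^m$. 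By construction $\mu_t=m_t$ for every $t\in[0,T]$, $\prob^m$-a.s., and the $\prob$-a.s.\ identity \eqref{existence:eq_misura_aleatoria} passes to a $\prob^m$-a.s.\ identity for $\rho$-a.e.\ $m$, so that, under $\prob^m$,
\begin{equation*}
    X_t = \xi + \int_0^t\!\!\int_A b(s,X_s,m_s,a)\,\mathfrak{r}_s(da)\,ds + W_t,\quad t\in[0,T].
\end{equation*}

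Thus, under $\prob^m$, the tuple $((\Omega,\F,\mathbb{F},\prob^m),\xi,W,\mathfrak{r})$ together with $X$ is a weak solution of equation \eqref{existence:eq_misura_deterministica} with deterministic flow $m$, and its driving data $(\xi,W,\mathfrak{r})$ has the same joint law as the corresponding data on the space carrying $X^m$. Pathwise uniqueness for \eqref{existence:eq_misura_deterministica} holds by Assumptions \ref{standing_assumptions}, hence weak uniqueness holds by Theorem \ref{teorema_di_unicita_legge}, giving
\begin{equation*}
    \prob^m\circ(X,\mathfrak{r})^{-1} \;=\; \prob\circ(X^m,\mathfrak{r})^{-1} \;=\; \Sigma(\cdot,m),\qquad\rho\text{-a.e.\ }m.
\end{equation*}
Integrating this identity against $\rho$ on $S$ yields the desired decomposition. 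The main technical obstacle is the upgrade from \emph{$W$ is a natural Brownian motion under $\prob^m$} to \emph{$W$ is an $\mathbb{F}$-Brownian motion under $\prob^m$}; this is where the assumption that $\mathbb{F}$ is generated by $(\xi,W,\mu,\mathfrak{r})$ and that $\mu$ enters only through $\F_0$ is essential, and it is handled exactly as in the appendix proof of weak uniqueness.
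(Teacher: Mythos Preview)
Your proof is correct and follows essentially the same route as the paper: disintegrate along $\mu$, use independence to preserve the law of $(\xi,W,\mathfrak{r})$ under $\prob^m$, verify that $W$ remains an $\mathbb{F}$-Brownian motion under $\prob^m$, reduce \eqref{existence:eq_misura_aleatoria} to \eqref{existence:eq_misura_deterministica}, and conclude via weak uniqueness. The only point where the paper is more explicit is the upgrade of $W$ to an $\mathbb{F}$-Brownian motion under $\prob^m$: it checks $\E^{\prob}[\1_A(W_t-W_s)\,g\mid\mu]=0$ for each fixed bounded $\F_s$-measurable $g$, and then uses that $\F^{\xi,W,\mu,\mathfrak{r}}_t$ is countably generated to pass from ``for each $g$, $\rho$-a.e.\ $m$'' to ``$\rho$-a.e.\ $m$, for all $g$''---a detail worth spelling out since the $\rho$-null set must not depend on $g$.
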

\begin{proof}
Let $\prob(\cdot \; \vert \; \mu)$ denote the regular conditional probability of $\prob$ given $\mu$.
Set $\prob^m(\cdot)=\prob(\cdot \; \vert \; \mu=m)$.
Since $(\xi,W,\mathfrak{r})$ and $\mu$ are independent by assumption, we have that $\prob^m\circ(\xi,W,\mathfrak{r})^{-1}=\prob\circ(\xi,W,\mathfrak{r})^{-1}$ for $\rho$-a.e. $m \in \contpdue$.
Therefore, it is enough to prove that $X$ is a solution to \eqref{existence:eq_misura_deterministica} on the tuple $\mathfrak{U}=((\Omega,\F,\mathbb{F},\prob^m),\xi,W,\mathfrak{r})$ for $\rho$-a.e. $m \in \contpdue$.
Then, since uniqueness in law holds for \eqref{existence:eq_misura_deterministica}, we deduce that $\prob^m\circ(\xi,W,\mathfrak{r},X)^{-1}=\mathcal{I}_{\Theta}(m)$ $\rho$-a.s.
Observe that, since the joint law of  $(\xi,W,\mathfrak{r})$ is the same under $\prob$ and $\prob^m$ for $\rho$-a.e $m$, $W$ is a natural Brownian motion under $\prob^m$ as well.
By definition of the filtration $\mathbb{F}$, it can be easily verified that
\begin{equation*}
    \E^\prob[\1_A(W_t-W_s)g \; \vert \;  \mu]=0 \quad \prob\text{-a.s}
\end{equation*}
for every $0 \leq s < t \leq T$, $A \in \boreliani{\R^d}$, $g$ bounded and $\F_s$-measurable.
This implies that
\begin{equation*}
    E^{\prob^m}[\1_A(W_t-W_s)g]=\E^{\prob}[\1_A(W_t-W_s)g\; \vert \;  \mu=m]=0
\end{equation*}
$\rho$-a.s., for every $g$ bounded and $\F_s$-measurable.
By working with a countable measure determining class of sets, which is possible since the $\sigma$-algebra $\F^{\xi,W,\mu,\mathfrak{r}}_t$ is countably generated for every $t \in [0,T]$, the equality holds for every $g$ bounded and $\F_s$-measurable, for $\rho$-a.e. $m \in \contpdue$, which in turn implies that $W$ remains an $\mathbb{F}$-Brownian motion under $\prob^m$ as well.
Under $\prob^m$ one has 
\begin{equation*}
    \prob^{m}\tonde{\int_A b(t,x,\mu_t,a)\mathfrak{r}_t(da)=\int_A b(t,x,m_t,a)\mathfrak{r}_t(da) \quad \forall x \in \R^d}=1 \quad Leb_{[0,T]}\text{-a.e. } t \in [0,T]
\end{equation*}
and therefore $X$ solves \eqref{existence:eq_misura_deterministica} for $b$ evaluated at $m \in \contpdue$.
The thesis follows from marginalizing as in the proof of point \ref{existence:lemma_kernels:kernel_ben_definito} in \ref{existence:lemma_kernels}.
\end{proof}

\smallskip
We turn our attention to the mimicking result, needed in the proof of the existence result in Theorem \ref{existence:main_theorem}:
\begin{lemma}\label{existence:lemma_mimicking}
Let $\Gamma \in \mathcal{K}$. There exists a measure $\hat{\Gamma} \in \mathcal{K}$ so that the following holds:
\begin{itemize}
    \item The marginal distributions of $\Gamma$ and $\hat{\Gamma}$ on $\contpdue$ are the same: $\Gamma(\contrd \times \mathcal{V} \times \cdot)=\hat{\Gamma}(\contrd \times \mathcal{V} \times \cdot)$.
    
    \item Let $(X,\mathfrak{r},\mu)$ be such that $\hat{\Gamma}=\prob\circ(X,\mathfrak{r},\mu)^{-1}$.
    Then $\mathfrak{r}$ is of the form $\mathfrak{r}_t=\hat{q}_t(X_t,\mu)$, where $\hat{q}:[0,T]\times\R^d\times\contpdue \to \mathcal{P}(A)$ is a measurable function.
    \item For every $\Sigma \in \mathcal{Q}$, it holds
    \begin{equation*}
        \mathfrak{p}(\Gamma,\Sigma)=\mathfrak{p}(\hat{\Gamma},\Sigma).
    \end{equation*}
\end{itemize}
\end{lemma}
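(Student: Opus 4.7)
The plan is a conditional mimicking argument in the spirit of Gyöngy and Brunick--Shreve, parametrized by the random flow $\mu$: I replace the relaxed control $\mathfrak{r}$ by a feedback of the form $\hat q_t(\hat X_t,\hat\mu)$ while preserving, for every $t$, the joint law of $(X_t,\mu)$. Let $(X,\mathfrak{r},\mu)$ be the tuple associated with $\Gamma$ on the probability space of Definition~\ref{existence:strategie_max}. Since $A$ is compact and the ambient spaces are Polish, standard disintegration yields a jointly Borel kernel $\hat q:[0,T]\times\R^d\times\contpdue\to\mathcal{P}(A)$ characterized by
\begin{equation*}
\int_A\phi(a)\,\hat q_t(X_t,\mu)(da)=\E\!\left[\int_A\phi(a)\,\mathfrak{r}_t(da)\,\bigg|\,X_t,\mu\right]\quad \prob\text{-a.s.,}
\end{equation*}
for Lebesgue-a.e.\ $t\in[0,T]$ and every bounded Borel $\phi:A\to\R$. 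Define $\hat b(t,x,m):=\int_A b(t,x,m_t,a)\,\hat q_t(x,m)(da)$, which is bounded and Borel by Assumption~(A.4).

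The core step is a parametrized Gyöngy--Brunick--Shreve mimicking applied conditional on $\mu=m$: since $W$ is independent of $\mu$, it remains a Brownian motion under $\prob(\cdot\mid\mu=m)$, under which $X$ solves an Itô equation with drift $\alpha^m_t=\int_A b(t,X_t,m_t,a)\,\mathfrak{r}_t(da)$ whose $\sigma(X_t,\mu=m)$-conditional expectation equals $\hat b(t,X_t,m)$. The mimicking theorem then produces, on a (possibly enlarged) filtered probability space, a Brownian motion $\hat W$, independent random variables $\hat\xi\sim\nu$ and $\hat\mu\sim\mu$, and a weak solution $\hat X$ of
\begin{equation*}
d\hat X_t=\hat b(t,\hat X_t,\hat\mu)\,dt+d\hat W_t,\qquad \hat X_0=\hat\xi,
\end{equation*}
such that, for $\rho$-a.e.\ $m$ and every $t\in[0,T]$,
\begin{equation*}
\hat{\prob}(\hat X_t\in\cdot\mid\hat\mu=m)=\prob(X_t\in\cdot\mid\mu=m)=m_t,
\end{equation*}
the second equality by consistency of $\Gamma$.

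Set $\hat{\mathfrak{r}}_t:=\hat q_t(\hat X_t,\hat\mu)$ and $\hat\Gamma:=\hat{\prob}\circ(\hat X,\hat{\mathfrak{r}},\hat\mu)^{-1}$. The conditional marginal identity above immediately gives the consistency condition for $\hat\Gamma$ (so $\hat\Gamma\in\mathcal{K}$), preservation of the $\contpdue$-marginal, and $(X_t,\mu)\sim(\hat X_t,\hat\mu)$ for every $t\in[0,T]$. Identity of payoffs now follows: the first summand of $\mathfrak{p}(\cdot,\Sigma)$ depends only on the $\contpdue$-marginal, while the tower property together with the defining property of $\hat q$ rewrites the second summand as
\begin{equation*}
\int_0^T\E\!\left[\int_A f(t,X_t,\mu_t,a)\,\hat q_t(X_t,\mu)(da)\right]dt+\E[g(X_T,\mu_T)],
\end{equation*}
which coincides with the analogous integral against $\hat\Gamma$ by the joint-law identity.

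The main obstacle is ensuring that the mimicking step can be performed jointly measurably in the parameter $m\in\contpdue$, so as to yield a genuine Borel kernel $\hat q$ and a Borel drift $(t,x,m)\mapsto\hat b(t,x,m)$. This parametric extension of Brunick--Shreve is by now a standard adaptation in the mean field games literature (see, e.g., \cite{lacker2015martingale,lacker2020convergence}), but requires careful bookkeeping of null sets, filtrations, and the topology of relaxed controls.
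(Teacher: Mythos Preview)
Your proposal is correct in spirit and follows the same overall architecture as the paper: construct the feedback kernel $\hat q$ by conditioning $\mathfrak{r}_t$ on $(X_t,\mu)$, apply a Brunick--Shreve-type mimicking result to produce a weak solution $\hat X$ driven by $\hat b$, and then recover the payoff identity via the tower property and the preserved one-time marginals of $(X_t,\mu)$. The disintegration argument for $\hat q$ and the payoff computation are essentially identical to the paper's.

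The one genuine difference is in how the mimicking step is executed. You frame it as a \emph{parametric} argument: apply Gy\"ongy/Brunick--Shreve under each conditional law $\prob(\cdot\mid\mu=m)$ and then glue the resulting weak solutions together, which, as you correctly flag, forces you to confront joint measurability in $m\in\contpdue$ and the construction of a single probability space carrying $(\hat X,\hat\mu)$. The paper sidesteps this entirely by applying Brunick--Shreve \emph{once} on an enlarged state space: it takes the conditioning process to be $Z_t=(X_t,\mu)$, where $\mu$ is viewed as a constant-in-time $\contpdue$-valued component, and invokes \cite[Theorem~3.6]{brunick_shreve_mimicking} with updating function $\Phi_t(x,m,y)=(x_t+y,m)$. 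This single application directly yields a filtered probability space supporting $(\hat X,\hat\mu,\hat W)$ with $\hat{\prob}\circ(\hat X_t,\hat\mu)^{-1}=\prob\circ(X_t,\mu)^{-1}$ for every $t$, so the consistency condition, the $\contpdue$-marginal preservation, and the independence of $\hat\mu$ from $(\hat\xi,\hat W)$ all come for free, with no measurable-selection or gluing step. Your route would work but is more laborious; the paper's extended-state-space trick is the cleaner way to handle exactly the obstacle you identified.
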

\begin{proof}
In the following, for a metric space $(E,d_E)$, $\phi:E \to \R$ continuous and bounded and $m \in \probmeasures{E}$, we set $\langle \phi, m \rangle = \int_E \phi(e) m(de)$.

Let $\mathfrak{U}=((\Omega,\F,\mathbb{F},\prob),\xi,W,\mu,\mathfrak{r})$ be as in Definition \ref{existence:strategie_max}, so that $\Gamma=\prob\circ(X,\mathfrak{r},\mu)^{-1}$.
As ensured by \cite[Lemma C.2]{lacker2020convergence}, we have that, by choosing $Y_t=(X_t,\mu)$ taking values in $\R^d \times \contpdue$ as conditioning process, there exists a jointly measurable function $\hat{q}:[0,T] \times \R^d \times \contpdue \to \mathcal{P}(A)$ so that, for every $\phi:[0,T] \times \R^d \times \contpdue \times A \to \R$ bounded and measurable it holds
\begin{equation}\label{existence:lemma_mimicking:markovian_ctrl}
    \int_{A}\phi(X_t,\mu,a)\hat{q}_t(X_t,\mu)(da)=\attesa{\int_{A}\phi(X_t,\mu,a)\mathfrak{r}_t(da) \big\vert X_t,\mu} \quad \prob\text{-a.s.}, \; \text{a.e. } t \in [0,T],
\end{equation}
which we abbreviate as
\begin{equation*}
    \hat{q}_t(X_t,\mu)(da)=\attesa{\mathfrak{r}_t(da) \big\vert X_t,\mu} \quad \prob\text{-a.s.}, \; \text{a.e. } t \in [0,T].
\end{equation*}
Next, we manipulate the term of the functional $\mathfrak{p}$ in \eqref{existence:payoff_functional} which depends only upon $\Gamma$:
\begin{equation}\label{existence:lemma_mimicking:uguaglianze1}
\begin{aligned}
    \int \: &  \mathfrak{F} (y,q,m) \Gamma(dy,dm,dq) = \attesa{\int_0^T\int_A f(t,X_t,\mu_t,a)\mathfrak{r}_t(da)dt + g(X_T,\mu_T)} \\
    = & \int_0^T \attesa{\attesa{ \int_A f(t,X_t,\mu_t,a)\mathfrak{r}_t(da) \Big \vert X_t,\mu }}dt + \attesa{g(X_T,\mu_T)} \\
    = & \int_0^T\attesa{ \int_A f(t,X_t,\mu_t,a)\hat{q}_t(X_t,\mu)(da) }dt + \attesa{g(X_T,\mu_T)} \\
    = & \int_0^T\attesa{\attesa{ \int_A f(t,X_t,\mu_t,a)\hat{q}_t(X_t,\mu)(da) \Big \vert \mu }}dt + \attesa{\attesa{g(X_T,\mu_T) \Big \vert \mu }} \\
    = & \int_0^T\attesa{ \left \langle \int_A f(t,\cdot,\mu_t,a)\hat{q}_t(\cdot,\mu)(da), \mu_t \right \rangle } dt + \attesa{\left \langle g(\cdot,\mu_T),  \mu_T \right \rangle }.
\end{aligned}
\end{equation}
Second equality holds by Fubini's theorem and tower property of conditional expectation,
third equality holds by definition of the control \eqref{existence:lemma_mimicking:markovian_ctrl}, third and fourth equalities hold by tower property again, and fifth equality holds since, by consistency condition, $\mu_t(\cdot)=\prob(X_t \in \cdot \; \vert \; \mu)$.

\smallskip
We observe that, by choosing $\phi(t,x,m,a)=b(t,x,m_t,a)$ in \eqref{existence:lemma_mimicking:markovian_ctrl}, we have 
\begin{equation*}
    \int_A b(t,X_t,\mu_t,a)\hat{q}_t(X_t,\mu)(da)=\attesa{\int_A b(t,X_t,\mu_t,a)\mathfrak{r}_t(da) \big\vert X_t,\mu} \quad \prob\text{-a.s.}, \; \text{a.e. } t \in [0,T].
\end{equation*}
This is enough to apply \cite[Theorem~3.6]{brunick_shreve_mimicking}: indeed, in its terminology, we can take $\mathcal{E}=\R^d \times  \contpdue$, $\Phi: \mathcal{E} \times \contrd_0  \to \conttraj{\mathcal{E}}$ defined by $\Phi_t(x,m,y)=(x_t + y,m) \in \R^d \times \contpdue$, where $\contrd_0=\{ y \in \contrd: \; x_0=0\}$.
Set $Z_t=\Phi(X_t-X_0,X_0,\mu)=(X_t,\mu)$, 
where we note that the second component of $Z$ is constant in time as it is equal to the whole flow $\mu=(\mu_s)_{s \in [0,T]}$.
Then, such a result ensures that there exists a probability space $(\hat{\Omega},\hat{\F},\hat{\mathbb{F}},\hat{\prob})$, with $\hat{\Omega}$ Polish and $\hat{\F}$ its corresponding Borel $\sigma$-algebra, supporting an $\hat{\mathbb{F}}$-Brownian motion $\hat{W}$, a continuous $\mathcal{E}$-valued process $\hat{Z}$ so that there exists an $\hat{\mathbb{F}}$-adapted process $\hat{X}$ that satisfies
\begin{equation*}
    \hat{X}_t=\hat{X_0} + \int_0^T\int_A b(s,\hat{X}_s,\hat{\mu}_s,a)\hat{q}_s(\hat{X}_s,\hat{\mu}_s)(da)ds + \hat{W}_t, \quad \hat{Z}=\Phi(\hat{X}_t-\hat{X}_0,\hat{X}_0,\hat{\mu})
\end{equation*}
so that for every $t \in [0,T]$ it holds $\prob\circ Z_t^{-1}=\hat{\prob}\circ\hat{Z}_t^{-1}$.
This implies both that $\hat{\mu}$ and $\mu$ have the same law $\rho$ and that the consistency condition is satisfied, since $\hat{\prob}\circ(\hat{X}_t,\hat{\mu})^{-1}=\prob \circ (X_t,\mu)^{-1}=m_t(dx)\rho(dm)$.
Finally, since $Z$ is $\hat{\mathbb{F}}$-adapted, we deduce that $\hat{X}_0$ and $\hat{\mu}$ are $\F_0$-measurable and therefore $\hat{W}$, $\hat{X}_0$ and $\hat{\mu}$ are mutually independent.

Set $\hat{\Gamma}=\hat{\prob}\circ(\hat{X},\hat{\mathfrak{r}},\hat{\mu})^{-1}$.
Since the last term in the chain of equalities \eqref{existence:lemma_mimicking:uguaglianze1} depends only upon $\mu$ and $\mu$ and $\hat{\mu}$ share the same law, we can exploit the fact that $\hat{\mu}$ and $\hat{X}$ satisfy the consistency condition as well to get
\begin{equation*}
\begin{aligned}
     \: \int &  \mathfrak{F} (y,q,m) \Gamma(dy,dm,dq) =\int_0^T \E \quadre{  \left \langle \int_A f(t,\cdot,\mu_t,a)\hat{q}_t(\cdot,\mu)(da), \mu_t \right \rangle } dt + \E \quadre{ \left \langle g(\cdot,\mu_T),  \mu_T \right \rangle }\\
     = & \int_0^T \E^{\hat{\prob}} \quadre{  \left \langle \int_A f(t,\cdot,\hat{\mu}_t,a)\hat{q}_t(t,\cdot,\hat{\mu}_t,a)(da), \hat{\mu}_t \right \rangle} dt + \E^{\hat{\prob}} \quadre{ \left \langle g(\cdot,\hat{\mu}_T),  \hat{\mu}_T \right \rangle } \\
     = & \int_0^T \E^{\hat{\prob}} \quadre{ \E^{\hat{\prob}} \quadre{\int_A f(t,\hat{X}_t,\hat{\mu}_t,a)\hat{q}_t(\hat{X}_t,\hat{\mu})(da)} dt + \E^{\hat{\prob}} \quadre{ g(\hat{X}_T,\hat{\mu}_T) \Big \vert \hat{\mu}} } \\
     = & \int_0^T \E^{\hat{\prob}} \quadre{ \int_A f(t,\hat{X}_t,\hat{\mu}_t,a)\hat{q}_t(\hat{X}_t,\hat{\mu})(da) } dt + \E^{\hat{\prob}} \quadre{ g(\hat{X}_T,\hat{\mu}_T) } \\
     = & \int \mathfrak{F} (y,q,m) \hat{\Gamma}(dy,dm,dq).
\end{aligned}
\end{equation*}
Analogously, for every $\Sigma \in \mathcal{Q}$, we have
\begin{equation*}
    \int \mathfrak{F} (y,q,m)  \Sigma(dy,dq,m)\rho(dm) = \int \mathfrak{F} (y,q,m)  \Sigma(dy,dq,m)\hat{\rho}(dm),
\end{equation*}
which proves the desired statement about the payoff functional $\mathfrak{p}$.
\end{proof}

Finally, we show that it is always possible to find a strong solution to equation \eqref{existence:eq_processo_K} in the case of a feedback in state control process $\hat{q}_t(x,m)$, as given by Lemma \ref{existence:lemma_mimicking}:
\begin{lemma}[Strong solutions for feedback in state controls]\label{existence:mimicking:lemma_strong_existence}
Let $(\Omega,\F,\mathbb{F},\prob)$ be a filtered probability space satisfying the usual assumptions, with $\Omega$ Polish and $\F$ its Borel $\sigma$-algebra, supporting a $d$-dimensional $\mathbb{F}$-Brownian motion $W$, an $\F_0$-measurable $\R^d$-valued random $\xi$ with law $\nu$ and a $\F_0$-measurable random flow of measures $\mu$ in $\contpdue$ with law $\rho$.
Assume that $\xi$, $W$ and $\mu$ are mutually independent.
Let $\hat{q}:[0,T]\times \R^d \times \contpdue \to \mathcal{P}(A)$ be a measurable function, and suppose that there exists a solution of the SDE
\begin{equation}
    dX_t=\int_A b(t,X_t,\mu_t,a)\hat{q}_t(X_t,\mu)(da)dt + dW_t,
\end{equation}
so that it holds
\begin{equation*}
    \mu_t(\cdot)=\prob( X_t \in \cdot \; \vert \; \mu) \quad \prob\text{-a.s.}
\end{equation*}
for every $t \in [0,T]$.
Then, $X$ may be taken adapted to the $\prob$-augmentation of the filtration $\mathbb{F}^{\xi,\mu,W}=\sigma(\xi) \vee \sigma(\mu) \vee \mathbb{F}^W$.
In particular, there exists a progressively measurable function $\Phi:\contpdue \times \R^d \times \contrd \to \contrd$ so that $\Phi(\mu,\xi,W)=X$ $\prob$-a.s.
\end{lemma}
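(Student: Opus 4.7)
The plan is to reduce the equation to one with deterministic coefficients by conditioning on $\mu$, invoke a strong existence and uniqueness result for SDEs with Borel measurable drift driven by additive Brownian noise, and then reassemble the solution measurably in $m \in \contpdue$. Let $(\prob^m)_{m\in\contpdue}$ denote a regular version of the conditional probability of $\prob$ given $\mu$, which exists because $\contpdue$ is Polish. By independence of $\xi$, $W$ and $\mu$ under $\prob$, for $\rho$-a.e.\ $m \in \contpdue$, under $\prob^m$ the initial condition $\xi$ still has law $\nu$, $W$ is still an $\mathbb{F}$-Brownian motion (the conditional independence of $W$ from $\sigma(\mu)$ is preserved, by the same countable-generation argument used in the proof of Lemma~\ref{existence:lemma_decomposizione}), and $\xi$ and $W$ remain independent. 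Since $\mu = m$ holds $\prob^m$-a.s., the equation becomes an SDE with deterministic coefficients,
\begin{equation*}
    X_t = \xi + \int_0^t \widetilde b^m(s, X_s)\, ds + W_t, \qquad t \in [0,T],
\end{equation*}
where $\widetilde b^m(t, x) := \int_A b(t, x, m_t, a)\, \hat q_t(x, m)(da)$ is jointly Borel measurable on $[0,T] \times \R^d$ and, by Assumptions~\ref{standing_assumptions} combined with the compactness of $A$ and $m \in \contpdue$, has at most linear growth in $x$ uniformly in $t \in [0,T]$.

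Next, for $\rho$-a.e.\ $m$, the drift $\widetilde b^m$ is Borel measurable (but not necessarily Lipschitz in $x$, since $\hat q_t(\cdot, m)$ is only measurable) with linear growth, while the noise is non-degenerate additive Brownian. By the classical strong existence theorem of Zvonkin (in dimension one) and Veretennikov (in higher dimensions) — after a standard localization via stopping times $\tau_R = \inf\{t : |X_t| \geq R\}$ to reduce the linear-growth case to the bounded one — pathwise uniqueness holds for this SDE, and by Yamada--Watanabe there exists a Borel measurable, progressively measurable functional $\Phi^m: \R^d \times \contrd \to \contrd$ such that every solution on $(\Omega,\F,\mathbb{F},\prob^m)$ is $\prob^m$-a.s.\ given by $\Phi^m(\xi, W)$. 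In particular $X = \Phi^m(\xi, W)$ $\prob^m$-a.s.

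Finally, since the Yamada--Watanabe construction of $\Phi^m$ can be carried out jointly measurably in the parameter $m$ (this is a measurable-selection argument applied to the disintegration of the joint law of $(\xi, W, X)$ with respect to $m$, exploiting that $X$ is $\prob^m$-a.s.\ a Borel function of $(\xi, W)$), the assembled map $\Phi(m, x, w) := \Phi^m(x, w)$ is jointly Borel measurable on $\contpdue \times \R^d \times \contrd$ and progressively measurable in the time variable. Integrating against $\rho$ then yields $X = \Phi(\mu, \xi, W)$ $\prob$-a.s., so $X$ is adapted to the $\prob$-augmentation of $\mathbb{F}^{\xi, \mu, W}$. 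The main obstacle is the invocation of a strong existence result for an SDE whose drift, although of linear growth in $x$ and bounded in $t$, is only Borel measurable in $x$: this forces reliance on Zvonkin--Veretennikov-type regularization by the Brownian noise rather than a direct Picard iteration. A secondary technical point is ensuring joint measurability of the strong-solution map $m \mapsto \Phi^m$ in the external parameter $m \in \contpdue$; everything else reduces to the routine conditioning and independence properties built into the setup.
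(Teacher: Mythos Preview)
Your approach is essentially the same as the paper's: both condition on $\mu$, invoke a Zvonkin--Veretennikov-type strong existence and pathwise uniqueness result for the frozen-$m$ equation (the paper defers this to the appendices of \cite{lacker2020convergence,lacker_leflem2022} rather than naming Veretennikov explicitly), and then reassemble measurably in $m$. The only packaging difference is in the final identification step: the paper first constructs an $\mathbb{F}^{\xi,\mu,W}$-adapted strong solution to the random-$\mu$ equation, checks that the given $X$ satisfies the compatibility condition $(X_s)_{s\le t}\perp\!\!\!\perp \F^{\xi,\mu,W}_T \mid \F^{\xi,\mu,W}_t$, and then applies pathwise uniqueness for compatible solutions; you instead conclude $X=\Phi^m(\xi,W)$ directly under each $\prob^m$ and integrate, which is arguably cleaner since it sidesteps the compatibility verification, but it puts correspondingly more weight on the joint measurability of $(m,x,w)\mapsto\Phi^m(x,w)$ that you only sketch.
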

\begin{proof}
Set $B(t,x,m)=\int_A b(t,x,m_t,a)\hat{q}_t(x,m)(da)$.
$B$ is jointly measurable in $(t,x,m) \in [0,T] \times \R^d \times\contpdue$ with at most linear growth in $(x,m) \in  \R^d \times \contpdue$ for every $t \in [0,T]$.
The following hold:
\begin{enumerate}
    \item \label{mimicking:lemma_strong_existence:punto_eq_deterministica}
    For every $m \in \contpdue$, equation
    \begin{equation} \label{mimicking:lemma_strong_existence:eq_deterministica}
        dX^m_t=B(t,X^m_t,m)dt +dW_t, \quad X^m_0=\xi.
    \end{equation}
    admits a unique strong solution.
    Moreover, let $P^m=\prob \circ (X^m)^{-1}$.
    Then, the map $\contpdue \ni m \mapsto P^m \in \probmeasures{\contrd}$ is measurable.
    \item \label{mimicking:lemma_strong_existence:esistenza_sol_forte}
    There exist a continuous $\mathbb{F}$-adapted process $X$ solution to
    \begin{equation}\label{mimicking:lemma_strong_existence:eq_aleatoria}
        dX_t=B(t,X_t,\mu)dt + dW_t, \quad X_0=\xi.
    \end{equation}
    $X$ is adapted to the $\prob$-augmentation of the filtration $\mathbb{F}^{\xi,\mu,W}$.
    \item \label{mimicking:lemma_strong_existence:pathwise_uniqueness}
    Pathwise uniqueness holds, in the following sense: 
    suppose there exists a pair of continuous $\mathbb{F}$-adapted processes $(X^1,X^2)$ which satisfy equation \eqref{mimicking:lemma_strong_existence:eq_aleatoria} so that $(X^1_s,X^2_s)_{s \leq t}$ is conditionally independent of $\F^{\xi,\mu,W}_T$ given $\F^{\xi,\mu,W}_t$ for every $t \in [0,T]$.
    Then, $\prob(X^1_t=X^2_t, \; 0 \leq t \leq T)=1$.
    
    \item \label{mimicking:lemma_strong_existence:punto_legge_soluzione}
    The joint law of $X$ and $\mu$ is given by 
    \begin{equation*}
        \prob\circ(X,\mu)^{-1}=P^m(dx)\rho(dm).
    \end{equation*}
\end{enumerate}
This properties can be proven with the same methods of \cite[Appendix~A]{lacker2020convergence} and \cite[Appendix~A]{lacker_leflem2022}.
We just point out that the results therein do not hold automatically in our case, since $B$ is not progressively measurable in the measure flow $m$, in the sense of \cite{lacker2020convergence,lacker_leflem2022}.
Nevertheless, since we require $\mu$ to be $\F_0$-measurable, the same arguments lead to the results above.

\smallskip
Let $X$ be as in the statement of the lemma.
We first show that the joint law of $X$ and $\mu$ is given by $P^m(dx)\rho(dm)$.
Let $\prob^m(\cdot)=\prob(\cdot \vert \mu=m)$ be a version of the regular conditional probability of $\prob$ given $\mu=m$.
Then, since $\xi$, $W$ and $\mu$ are mutually independent, 
$\prob^m\circ(\xi,W)^{-1}=\prob\circ(\xi,W)^{-1}$ for $\rho$-a.e. $m$, and, by exploiting the fact the $\F^{\xi,\mu,W,X}_t$ is countably generated for every $t$, $W$ is an $\mathbb{F}^{\xi,\mu,W,X}$-Brownian motion under $\prob^m$ as well.
Therefore, $X$ satisfies equation \eqref{mimicking:lemma_strong_existence:eq_deterministica} on $(\Omega,\F,\mathbb{F}^{\xi,\mu,W,X},\prob^m)$ for $\rho$-a.e. $m \in \contpdue$.
By point \ref{mimicking:lemma_strong_existence:punto_eq_deterministica}, $\prob^m \circ X^{-1}=P^m$ for $\rho$-a.e. $m \in \contpdue$, which implies that $\prob\circ(X,\mu)^{-1}=P^m(dx)\rho(dm)$.

\smallskip
It can be shown by straightforward calculations that $(X_s)_{s\leq t}$ is conditionally independent of $\F^{\xi,\mu,W}_T$ given $\F^{\xi,\mu,W}_t$, for every $t \in [0,T]$.
Since pathwise uniqueness holds by point \ref{mimicking:lemma_strong_existence:pathwise_uniqueness}, this implies that $X$ is indistinguishable from an $\mathbb{F}^{\xi,\mu,W}$-adapted solution to equation \eqref{mimicking:lemma_strong_existence:eq_aleatoria}.
\end{proof}

\providecommand{\bysame}{\leavevmode\hbox to3em{\hrulefill}\thinspace}
\providecommand{\MR}{\relax\ifhmode\unskip\space\fi MR }
\providecommand{\MRhref}[2]{%
  \href{http://www.ams.org/mathscinet-getitem?mr=#1}{#2}
}

\textbf{Acknowledgments.} \hspace{0.25cm} 
Markus Fischer acknowledges financial support from the research project "Stochastic mean field control and the Schrödinger problem" (BIRD229791) of the University of Padua.
Luciano Campi and Markus Fischer received partial financial support from EU – Next Generation EU – PRIN2022 (2022BEMMLZ) CUP: D53D23005780006.

\end{document}